\numberwithin{equation}{section}
\DeclareMathOperator*{\argmin}{arg\,min}
\newcommand{\Aug}{\mathrm{Aug}}
\newcommand{\R}{\mathbb{R}}
\newcommand{\PR}{\mathbb{P}}
\newcommand{\K}{\mathcal{K}}
\newcommand{\supp}{\mathrm{supp}}
\newcommand{\Pos}{\mathrm{Pos}}
\newcommand{\TZ}{\mathrm{TZ}}
\newcommand{\DZ}{\mathrm{DZ}}
\newcommand{\inte}{\mathrm{int}}
\newcommand{\Z}{\mathcal Z}
\newcommand{\BL}{\mathrm{BL}}
\newcommand{\Lip}{\mathrm{Lip}}
\newcommand{\E}{\mathbb{E}}
\newcommand{\Conv}{\mathrm{Conv}}
\newcommand{\tr}{\mathrm{tr}}
\newcommand{\proj}{\mathrm{proj}}
\newtheorem{thm}{Theorem}
\numberwithin{thm}{section}
\newtheorem{cor}[thm]{Corollary}
\newtheorem{lem}[thm]{Lemma}
\newtheorem{prop}[thm]{Proposition}
\newtheorem{defn}[thm]{Definition}
\newtheorem{rem}[thm]{Remark}
\newtheorem{exm}[thm]{Example}
\providecommand{\keywords}[1]{\textbf{\textit{Keywords}} #1}
\providecommand{\keywordsMSC}[1]{\textbf{\textit{MSC 2010 subject classification}} #1}
\newcommand{\footremember}[2]{\footnote{#2}\newcounter{#1}\setcounter{#1}{\value{footnote}}}
\newcommand{\footrecall}[1]{\footnotemark[\value{#1}]}
\begin{document}
\setlength{\parindent}{0pt}

\author{Marcel Klatt \footremember{ims}{\scriptsize Institute for Mathematical
		Stochastics, University of G\"ottingen,
		Goldschmidtstra{\ss}e 7, 37077 G\"ottingen} 
	\and 
	Axel Munk \footrecall{ims} \footnote{\scriptsize Max Planck Institute for Biophysical
		Chemistry, Am Fa{\ss}berg 11, 37077 G\"ottingen}
		\and 
	Yoav Zemel \footremember{cis}{\scriptsize Centre for Mathematical Sciences, University of Cambridge, Cambridge CB3 0WB}}

	\title{Limit Laws for Empirical Optimal Solutions \\in Stochastic Linear Programs}

\maketitle
\begin{abstract}
We consider a general linear program in standard form whose right-hand side constraint vector is subject to random perturbations. This defines a stochastic linear program for which, under general conditions, we characterize the fluctuations of the corresponding empirical optimal solution by a central limit-type theorem. Our approach relies on the combinatorial nature and the concept of degeneracy inherent in linear programming, in strong contrast to well-known results for smooth stochastic optimization programs. In particular, if the corresponding dual linear program is degenerate the asymptotic limit law might not be unique and is determined from the way the empirical optimal solution is chosen. Furthermore, we establish consistency and convergence rates of the Hausdorff distance between the empirical and the true optimality sets. As a consequence, we deduce a limit law for the empirical optimal value characterized by the set of all dual optimal solutions which turns out to be a simple consequence of our general proof techniques. \\
Our analysis is motivated from recent findings in statistical optimal transport that will be of special focus here.  In addition to the asymptotic limit laws for optimal transport solutions, we obtain results linking degeneracy of the dual transport problem to geometric properties of the underlying ground space, and prove almost sure uniqueness statements that may be of independent interest.
\end{abstract}

\keywords{Limit law, Linear programming, Optimal transport, Sensitivity analysis}

\keywordsMSC{Primary: 62E20, 90C15, 90C05 Secondary: 90C31, 49N15}

\section{Introduction}

Linear programs are constrained optimization problems where the objective function and the constraints are given by linear functions on a Euclidean space. Arising naturally in many applications, they have become ubiquitous in topics such as operations research, control theory, economics, physics, mathematics and statistics (see, e.g., the textbooks by \cite{bertsimas1997introduction}, \cite{luenberger2008linear},  \cite{galichon2018optimal} and the references therein). Their solid mathematical foundation dates back to the mid-twentieth century, to mention the seminal works of \cite{kantorovich1960mathematical}, \cite{hitchcock1941distribution} and \cite{dantzig1948programming} and its algorithmic computation is an active topic of research until today. In mathematical terms, a linear program in standard form writes
\begin{equation}\label{eq:standardLP}
\begin{aligned}
\min_{x\in \mathbb{R}^d} \quad
c^T x \qquad
\text{s.t.} \qquad
Ax = \,b,\quad
x \geq 0,
\end{aligned}\tag{$\text{P}_b$}
\end{equation}
with $(A,b,c)\in \mathbb{R}^{m\times d}\times \mathbb{R}^m\times \mathbb{R}^d$ and matrix $A$ of full rank $m\leq d$, and where, for the purpose of the paper, the lower subscript $b$ in \eqref{eq:standardLP} emphasizes the dependence on the vector $b$. At the heart of linear programming is the observation that the optimum must be attained on a finite set of feasible points termed basic feasible solutions. Each such point is identified by a basis $I\subset\{1,\ldots,d\}$ indexing $m$ linearly independent columns of the constraint matrix $A$. In fact, the simplex algorithm \citep{dantzig1948programming,dantzig1951maximization}, among the most well-known algorithm to solve \eqref{eq:standardLP}, is specifically tailored to move from one basic feasible solution to another whilst improving the objective value at each step.

Shortly after first algorithmic approaches and theoretical results became available, the need to incorporate uncertainity in the parameters has become apparent (see \cite{dantzig1955uncertainty,beale1955minimizing,ferguson1956allocation} for early contributions). In fact, apart from its relevance in numerical stability issues, in many practical applications certain parameters are not known exactly and instead estimated empirically. Hence, accounting for randomness in linear programs is critical for many problems and encouraged the development of \emph{stochastic linear programming} in which some parameters in \eqref{eq:standardLP} are subject to (possibly random) perturbations (see, e.g., \cite{kall1976stochastic,ruszczynski2003stochastic}. Nevertheless, studying the behavior of the estimated optimal values and the corresponding estimated optimal solutions appears more common in stochastic \emph{nonlinear} programs, where the objective function and constraints are nonlinear functions of $x\in\mathbb R^d$ (see \cite{dupavcova1987stochastic,dupacova1988asymptotic,shapiro1989asymptotic,shapiro1991asymptotic,shapiro1993asymptotic,king1993asymptotic} and references therein). Regularity assumptions such as second order growth conditions for the functions defining the optimization problem allow for either explicit asymptotic expansions of optimal values and optimal solutions or applications of implicit function theorems and generalizations thereof. More recent studies include analytical properties of the optimal solution or its objective value such as continuity and differentiability, as well as statistical implications (see \cite{romisch2003stability,eichhorn2007stochastic,klatt2020empirical} and references therein).

In this paper, we focus on statistical aspects for the standard \emph{linear} program \eqref{eq:standardLP}, where these regularity assumptions fail. Specifically, we consider the case where the right-hand side constraint vector $b\in\mathbb{R}^m$ is replaced by a (consistent) estimator $b_n$ indexed in $n\in \mathbb{N}$, e.g., based on $n$ observations. Such a randomness in the parameter $b$ reflects practical needs, as it usually models budget, prices or capacities that are often not known in advance. The main goal of this paper is to characterize the statistical fluctuation of the empirical optimal solution 
\begin{equation}\label{eq:introSolution}
x^\star(b_n) = \argmin_{Ax=b_n,\, x\geq 0} c^Tx
\end{equation}
by a central limit-type theorem. Our approach is based on a careful study of possible changes of bases depending on small (random) perturbations of $b$. This is achieved by considerations of the corresponding \textit{dual} linear program to \eqref{eq:standardLP} given by
\begin{equation}\label{eq:standardDP}
\begin{aligned}
\max_{\lambda \in \mathbb{R}^m} \quad
b^T \lambda \qquad
\text{s.t.} \quad
\lambda^T A \leq\, c^T.
\end{aligned}\tag{$\text{D}_b$}
\end{equation}
Most notably, a basis $I\subset \{1,\ldots,d\}$, i.e., a collection of $m$ independent columns of the constraint matrix $A$, does not only define a basic solution for the primal program \eqref{eq:standardLP} but (possibly) also for the dual \eqref{eq:standardDP}. In fact, our results show that the stochastic behavior of $x^\star(b_n)$ in \eqref{eq:introSolution} is inextricably linked to the collection of all bases $I$ that induce optimal solutions to both the primal and the dual problem. The collection of such bases depends on $b$ and this dependence can be rather complex. Being the key notion behind the simplex method, the understanding of the behavior of the collection of optimal bases has been studied under different names in the literature. A first contribution is \cite{wets1966programming} and \cite{walkup1969lifting} \emph{basis decomposition theorem} that describes the behavior for the optimal value for \eqref{eq:standardLP} as a function on the parameter $b$. In algebraic geometry such statement is closely related to a \emph{cone-triangulation} \citep{sturmfels1997variation,de2010triangulations} of the primal feasible optimization region. Feasible basis changes are also fundamental in analyzing linear programming algorithms such as the dual simplex method \citep{bertsimas1997introduction}.  Lastly, dealing with changes in certain parameters for \eqref{eq:standardLP} is nowadays subsumed as \emph{sensitivity analysis} for linear programming or in the special case of our bases driven approach, \emph{basis invariancy} \citep{greenberg1986analysis,ward1990approaches,hadigheh2006sensitivity}.

The main results of this paper stated in Theorem \ref{thm:limitGeneral} and Theorem \ref{thm:limit} describe the statistical fluctuation of the estimated optimal solution $x^\star(b_n)$ in \eqref{eq:introSolution} around its population version $x^\star(b)$ (after proper standardization) by a central limit-type theorem. Under suitable assumptions (see Section \ref{sec:Preliminaries} for details) we find that 
\begin{equation}\label{eq:introTHM}
r_n\left(x^{\star}(b_n) - x^{\star}(b)\right)
\xrightarrow[]{D} M(G)\, ,
\end{equation}
where \begin{small}$\xrightarrow[]{D}$\end{small} denotes weak convergence \citep{billingsley1999convergence} and $G$ is the weak limit random variable of $r_n(b_n-b)$ with $r_n\to\infty$ as $n$ tends to infinity. A prototypical example is the central limit theorem, whereby $r_n=\sqrt{n}$ and $G$ is a Gaussian random vector on $\mathbb R^m$. The limit law in \eqref{eq:introTHM} is then given by a (possible random) function $M$ evaluated at $G$ and its explicit form is captured on whether and to what extent degeneracy is present in the primal and dual optimal solutions. More precisely, one distinguishes three cases of increasing complexity for $M$. The first and simplest case occurs if the primal optimal solution $x^\star(b)$ is nondegenerate.  In this case the function $M$ is a linear transformation depending on the unique optimal basis for \eqref{eq:standardLP}. In the central limit theorem case where $G$ is Gaussian, the limit law in \eqref{eq:introTHM} will consequently be Gaussian, too (see Theorem \ref{thm:limitGeneral}). When $x^\star(b)$ is degenerate but the dual optimal basic solutions for \eqref{eq:standardDP} are not, then $M$ is a sum of (deterministic) linear transformations defined on cones that are indexed by the collection of dual optimal bases (see Theorem \ref{thm:limit}). Specifically, the number of summands in $M$ is equal the number of dual optimal basic solutions for \eqref{eq:standardDP}. The last and most complicated case arises when both $x^\star(b)$ and some dual optimal basic solutions exhibit degeneracy. In this setting the function $M$ is still a sum of linear transformations defined on cones. However, these transformations are potentially random and indexed by certain \textit{subsets} of the set of optimal bases. In comparison with the previous case, these subsets do not only consist of singletons and indeed the collection of those subsets reflect the complicated combinatorial nature in linear programming under degeneracy (see Theorem \ref{thm:limitGeneral}).  Note that, as a consequence of usual sensitivity analysis for the nondegenerate case \citep[see for example][Section 4.4]{luenberger2008linear}, the limit law is a linear function of $G$.  This is no longer true when denegeracy is present, and, to the best of our knowledge, this paper is the first that covers limit laws for general linear programs under degeneracy.

The central limit theorem in \eqref{eq:introTHM} is, of course, only reasonable if the primal optimal solution $x^\star(b)$ for \eqref{eq:standardLP} is unique. Still, interesting results can be established by our bases driven approach when such uniqueness fails. First, we establish consistency and convergence rates of the Hausdorff distance between the empirical and the true optimality sets (see Theorem \ref{thm:Hausdorff}). Second, denoting by $c(b)$ the optimal objective value for \eqref{eq:standardLP}, we provide a general distributional result for the empirical optimal value
\begin{equation}\label{eq:introValue}
r_n(c(b_n) - c(b))
\xrightarrow[]{D}
\max_{\substack{\lambda(I) \text{ dual optimal} \\ \text{basic solution for \eqref{eq:standardDP}}}}G^T\lambda(I)
\end{equation}
(see Proposition \ref{prop:cltoptimalvalue}). The limit law \eqref{eq:introValue} depends on the set of all dual optimal basic solutions and this again turns out to be a simple consequence of our bases driven approach. 

One of the most important instances of linear programming is \textit{optimal transport}, i.e., how to transport goods in the most efficient (e.g., economically or physically) manner. With a rich history in economics and  mathematics \citep{vershik2013long}, recent computational progress paved the way to explore novel fields of application and optimal transport achieved great interest in imaging \citep{rubner2000earth,solomon2015convolutional}, machine learning \citep{frogner2015learning,arjovsky2017wasserstein}, and statistical data analysis \citep{chernozhukov2017monge,sommerfeld2018inference,delBarrio2019robust,peyre2019computational,panaretos2019statistical}. In fact, our statistical analysis for general stochastic linear programs in standard form is motivated by recent findings in statistical optimal transport. In particular, while central limit theorems for the empirical optimal transport cost are well investigated \citep[see e.g.,][]{delBarrio1999tests,tameling2019empirical,del2019central}, the statistical behaviour of their corresponding empirical optimal transport solutions remains largely open. An exception is \cite{klatt2020empirical}, who provide limit theorems for \emph{(entropy) regularized} optimal transport solutions, thus modifying the underlying linear program to be strictly convex, nonlinear and most importantly nondegenerate in the sense that every regularized optimal transport solution is strictly positive in each coordinate. Hence, an implicit function theorem approach in conjunction with a delta method allows concluding for Gaussian limits in this case. This stands in stark contrast to the unregularized optimal transport considered in this paper, where the degenerate case is generic rather than the exception for most practical situations. More precisely, only if the optimal transport solution is nondegenerate then we observe a Gaussian fluctuation on the support set, i.e., on all entries with positive values. If the optimal transport solution is degenerate, then the asymptotic limit law \eqref{eq:introTHM} is not Gaussian anymore. Degeneracy in optimal transport problems easily occurs as soon as certain subsets of demand and supply sum up to the same quantity. In particular, we encounter the highest possible degree of degeneracy if individual demand is equal to individual supply. Additionally, we obtain necessary and sufficient conditions on the cost function in order for the dual optimal transport to be nondegenerate. This allows to prove almost sure uniqueness statements that may be of interest in their own. 

Our distributional results can be viewed as a basis for uncertainty quantification and other statistical inference procedures concerning solutions to linear programs. This is illustrated by the distributional laws of various regular functionals of the optimal solution, which follow easily from our theory (see Section \ref{subsec:twosample} for first examples). A detailed study of their statistical consequences such as in testing theory or for confidence statements presents an important avenue for future research.

The outline of the paper is as follows. We first recall basics for linear programming in Section \ref{sec:Preliminaries} and introduce deterministic and stochastic assumptions on the linear program \eqref{eq:standardLP} and the random fluctuation $b_n$ of the constraint vector $b$, respectively. Our main results are summarized in Section \ref{sec:MainResults}, followed by their proofs in Section \ref{sec:Proofs}. The assumptions are discussed in more detail in Section \ref{sec:Assumptions}. Section \ref{sec:OT} focuses on the specific case of the optimal transport problem. Apart from the self-contained proofs of the main results in Section \ref{sec:Proofs}, for the sake of readability most proofs from Sections \ref{sec:Preliminaries}, \ref{sec:Assumptions} and \ref{sec:OT} are given in Appendices \ref{Appendix:LinearProgramProofs}, \ref{Appendix:Assumptions} and \ref{Appendix:OT}, respectively. 

\section{Preliminaries and Assumptions}\label{sec:Preliminaries}

This section recalls basics of linear programming and introduces notation and assumptions required to state the main results of the paper. Proofs for statements in this section are either contained in Appendix \ref{Appendix:LinearProgramProofs} or a reference is given. For further details we encourage the reader to consult standard textbooks on linear programs such as \cite{bertsimas1997introduction}, \cite{sierksma2001linear} and \cite{luenberger2008linear}.\\
\textbf{Linear programs and duality.} At the heart of linear programming is the correspondence between the primal \eqref{eq:standardLP} and its \emph{dual} linear program \eqref{eq:standardDP}. To see this, let the columns of the matrix $A$ be indexed by the set $[d]\coloneqq \{1,\ldots,d\}$. For an index set $I\subseteq [d]$ let $A_I\in \mathbb{R}^{m\times \vert I \vert}$ be the sub-matrix of $A$ formed by the corresponding columns indexed by $I$. Similarly, a vector $x_I\in \mathbb{R}^{\vert I \vert}$ is a sub-vector of $x\in\mathbb{R}^d$ that only consists of coordinates indexed by $I$. By full rank of $A$ there exists at least one index set $I$ with cardinality $m$ such that $A_I\in \mathbb{R}^{m\times m}$ is one-to-one. An index set with that property is said to be a \emph{basis}. The fact that $A_I$ is one-to-one means that the linear equation $\lambda^T A_I=c_I$ has a unique solution
\begin{equation*}
\lambda(I)\coloneqq (A_I)^{-T}c_I \in \mathbb{R}^m
\end{equation*}
referred to as a \emph{dual basic solution}. Notice that $\lambda(I)$ is not necessarily feasible for \eqref{eq:standardDP} as we only enforced the subset $I\subseteq [d]$ of constraints to be satisfied by equalities. If dual feasibility holds $\lambda(I)^TA\leq c^T$ then $\lambda(I)$ is said to be a \textit{dual basic feasible solution} with \emph{dual feasible basis} $I$. If $\lambda(I)$ is an optimal solution, i.e., feasible and maximizes the objective in \eqref{eq:standardDP} among all dual feasible solutions, then it is referred to as a \emph{dual optimal basic solution}. Similarly, for the primal program \eqref{eq:standardLP}, for each basis $I$ consider the linear equation $A_I x_I = b$ with unique solution $x_I \in \mathbb{R}^{m}$. In order to match dimensions (a solution for the primal has dimension $d$ instead of $m\leq d$), we augment the solution with coordinates indexed by $I^c\coloneqq [d]\setminus I$ whose values are set to zero. Hence, for each basis $I$ this yields the vector 
\begin{equation*}
x(I,b)\coloneqq \text{Aug}_I\left[\left(A_I\right)^{-1}b \right]\in \mathbb{R}^d
\end{equation*}
denoted as a \emph{primal basic solution}, where $\Aug_I\colon\R^m\to\R^d$ is the operator that sets zeroes in the coordinates that are not in $I$. Notice that $\Aug_I$ is a linear operator, i.e., for $b_1,b_2\in \mathbb{R}^m$ it holds that $x(I,b_1-b_2)=x(I,b_1)-x(I,b_2)$. Again the primal basic solution $x(I,b)$ is not necessarily feasible for \eqref{eq:standardLP} as some coordinates might be negative. If $x(I,b)\geq 0$ and hence primal feasibility holds then $x(I,b)$ is said to be a \emph{primal basic feasible solution} with \textit{primal feasible basis} $I$. If additionally $x(I,b)$ is optimal, i.e., feasible and minimizes the objective function in \eqref{eq:standardLP} among all primal feasible solutions, it is said to be a \emph{primal optimal basic solution}. The \emph{fundamental theorem of linear programming} \cite[Section 2.4]{luenberger2008linear} addresses the existence of such an (optimal) feasible basis $I$. We again emphasize that throughout $A$ is assumed to have full rank.

\begin{thm}\label{thm:optFeas}
Consider the primal linear program \eqref{eq:standardLP}.
\begin{itemize}
\item[(i)] If there exists a feasible solution, there exists a primal feasible basis $I\subseteq [d]$ such that $x(I,b)$ is a primal basic feasible solution.
\item[(ii)] If there exists an optimal solution, there exists a primal feasible basis $I\subseteq [d]$ such that $x(I,b)$ is a primal optimal basic solution.
\end{itemize}
Moreover, the same statement holds for the dual linear program \eqref{eq:standardDP}.
\end{thm}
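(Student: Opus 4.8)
The plan is to establish (i) and (ii) for the primal program by a \emph{minimal support} argument, and then to deduce the dual statements from the fact that a non-empty polyhedron containing no line has a vertex.

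\emph{Primal, parts (i) and (ii).} For (i) I would pick, among all feasible solutions of \eqref{eq:standardLP}, one --- call it $x$ --- whose support $S:=\{j\in[d]:x_j>0\}$ has minimal cardinality, and claim that the columns $\{A_j\}_{j\in S}$ are linearly independent. If they were not, there would be $w\in\R^d$ with $\supp(w)\subseteq S$, $w\neq0$, and $Aw=0$; then $A(x+tw)=b$ for every $t$, and $x+tw\ge0$ for $|t|$ small since $w$ vanishes outside $S$ where $x$ is strictly positive. Replacing $w$ by $-w$ if necessary so that $w$ has a negative coordinate and setting $\bar t:=\min_{j:\,w_j<0}(-x_j/w_j)>0$, the point $x+\bar t w$ is feasible with strictly smaller support, contradicting minimality. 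Hence $|S|\le m$; using full rank of $A$, enlarge $S$ to a basis $I\supseteq S$. Since $x$ vanishes off $I$ and $A_I x_I=Ax=b$, the sub-vector $x_I$ solves $A_I x_I=b$, so $x_I=(A_I)^{-1}b$ and $x=\Aug_I[(A_I)^{-1}b]=x(I,b)\ge0$ is a primal basic feasible solution. For (ii) I would run the identical argument over the set of \emph{optimal} solutions: optimality of $x$ forces $c^Tw=0$ for any such $w$ (otherwise $c^T(x\pm tw)$ would undercut $c^Tx$ while staying feasible for small $t$), so the segment $x+tw$ stays optimal and $x+\bar t w$ is an optimal solution of strictly smaller support --- again a contradiction. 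Thus $S$ extends to a basis $I$ with $x(I,b)=x$ a primal optimal basic solution.

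\emph{Dual.} Since \eqref{eq:standardDP} is not in standard form, I would argue directly on $P:=\{\lambda\in\R^m:\lambda^TA\le c^T\}$. Full row rank of $A$ makes $A^T$ injective, so $P$ contains no line, and its vertices are exactly the dual basic feasible solutions $\lambda(I)$ (at a vertex at least $m$ of the constraints $\lambda^TA_j=c_j$ hold with $\{A_j\}$ linearly independent, and $\lambda(I)$ is the unique point with that property; conversely every feasible $\lambda(I)$ is such a vertex). It then remains to show that a non-empty line-free polyhedron has a vertex, which yields part (i) for the dual, and that the set of dual optimal solutions, being a face of $P$, is again a non-empty line-free polyhedron and hence also has a vertex --- necessarily a vertex of $P$ and an optimal dual basic solution --- which yields part (ii). To produce a vertex I would start from any $\lambda\in P$ and, while the active columns $\{A_j:\lambda^TA_j=c_j\}$ fail to span $\R^m$, move $\lambda\mapsto\lambda+t\delta$ along a direction $\delta$ orthogonal to those columns; the old tight constraints stay tight, and since the full column set of $A$ spans $\R^m$ some inactive $A_{j_1}$ has $\delta^TA_{j_1}\neq0$, so for $\delta$ or $-\delta$ the largest feasible $t$ is finite and at that $t$ a new constraint becomes active. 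After at most $d$ enlargements the active columns span $\R^m$ and $\lambda$ is a vertex.

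I expect the main obstacle to be this last step --- the active-set enlargement for the dual --- where one must check that no previously tight constraint is ever lost and that the motion cannot continue indefinitely without meeting a new constraint; this is exactly where full rank of $A$ (equivalently, $P$ containing no line) is used. The rest is routine linear algebra together with the fact, already noted in the excerpt, that $\Aug_I$ is linear.
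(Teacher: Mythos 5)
Your proof is correct. Note, however, that the paper does not supply its own proof of this theorem: it is stated with a citation to \citet[Section 2.4]{luenberger2008linear} (in line with the remark at the start of Section~\ref{sec:Preliminaries} that statements are either proved in the appendix or given a reference). Your primal argument (minimal-support feasible/optimal point, perturbation along a null direction of $A$ restricted to the support, extension of the independent columns to a basis using full rank of $A$) is precisely the standard fundamental-theorem-of-LP proof found in Luenberger, and your dual argument (full row rank of $A$ $\Rightarrow$ $\{\lambda:\lambda^TA\le c^T\}$ contains no line; a non-empty line-free polyhedron has a vertex, obtained by repeatedly enlarging the active set along a direction orthogonal to the currently active columns; the optimal set is a non-empty line-free face, so it too has a vertex which is a vertex of the whole polyhedron) is the standard route for programs not in standard form, as in \citet[Section 2.5--2.6]{bertsimas1997introduction}. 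The only small detail worth tightening: the strict increase of the active span at each enlargement step comes from the new active column $A_{j'}$ satisfying $\delta^TA_{j'}\ne0$ while $\delta$ annihilates the old active columns, so $A_{j'}$ cannot lie in their span; this gives termination after at most $m$ (hence certainly at most $d$) steps.
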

In view of Theorem \ref{thm:optFeas}, feasible and optimal solutions for primal \eqref{eq:standardLP} and dual program \eqref{eq:standardDP} can be found by considering the collection of all possible bases. Each basis $I$ contains those column indices from the coefficient matrix $A\in \mathbb{R}^{m\times d}$ such that the sub-matrix $A_I\in\mathbb{R}^{m\times \vert I\vert}$ is invertible. Hence, a trivial upper bound on the number of bases is $\binom{d}{m}=\frac{d!}{m!(d-m)!}$. This is a finite quantity but grows exponentially fast in $m$ and $d$. In general, a basis $I$ might be dual feasible while on the contrary it does not constitute a primal feasible basis and vice versa. However, we have the following statement known as \emph{strong duality} \cite[Section 4.2]{luenberger2008linear}.

\begin{thm}\label{thm:strongduality}
Consider the primal linear program \eqref{eq:standardLP} and its dual \eqref{eq:standardDP}.
\begin{itemize}
\item[(i)] If either of the linear programs \eqref{eq:standardLP} or \eqref{eq:standardDP} has a finite optimal solution, so does the other and the corresponding optimal values are the same.
\item[(ii)] If for a basis $I\subseteq [d]$ the vector $\lambda(I)$ is dual feasible and $x(I,b)$ is primal feasible, then both are primal and dual optimal basic solutions, respectively.
\end{itemize}
\end{thm}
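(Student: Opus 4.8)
The plan is to deduce both parts from \emph{weak duality} together with the fundamental theorem of linear programming (Theorem~\ref{thm:optFeas}) and the constructive termination of the simplex method. First I would record weak duality: for any primal feasible $x$ (so $Ax=b$, $x\ge 0$) and any dual feasible $\lambda$ (so $\lambda^TA\le c^T$ coordinatewise), one has
$b^T\lambda=\lambda^T(Ax)=(\lambda^TA)\,x\le c^Tx$,
where the inequality uses $x\ge 0$ and $\lambda^TA\le c^T$ entrywise. In particular, whenever both programs are feasible, the optimal value of \eqref{eq:standardDP} is at most that of \eqref{eq:standardLP}.

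For part (ii), suppose a basis $I$ has $\lambda(I)$ dual feasible and $x(I,b)$ primal feasible. A direct computation shows the two objective values coincide: since $x(I,b)=\Aug_I[(A_I)^{-1}b]$ vanishes in the coordinates outside $I$, we get $c^Tx(I,b)=c_I^T(A_I)^{-1}b$, whereas $b^T\lambda(I)=b^T(A_I)^{-T}c_I=c_I^T(A_I)^{-1}b$. Combining this equality with weak duality: $x(I,b)$ attains the value $b^T\lambda(I)$, which by weak duality is a lower bound for $c^Tx$ over all primal feasible $x$; hence $x(I,b)$ is primal optimal. Symmetrically, $\lambda(I)$ attains $c^Tx(I,b)$, an upper bound for $b^T\lambda$ over dual feasible $\lambda$, so $\lambda(I)$ is dual optimal.

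For part (i), assume first that the primal \eqref{eq:standardLP} has a finite optimal solution. By Theorem~\ref{thm:optFeas}(ii) there is a basis $I$ with $x(I,b)$ a primal optimal basic solution; moreover, running the simplex method from this vertex (with an anti-cycling rule, so that it terminates) one may take $I$ such that the reduced cost vector $c^T-\lambda(I)^TA$ is nonnegative, which is exactly dual feasibility of $\lambda(I)$. Part (ii) then yields that $\lambda(I)$ is dual optimal and that the optimal values agree. For the converse direction I would either (a) verify by a routine Lagrangian computation that the linear-programming dual of \eqref{eq:standardDP} is precisely \eqref{eq:standardLP}, so the preceding argument applied to this pair gives the claim; or (b) argue that if \eqref{eq:standardDP} has a finite optimal value then \eqref{eq:standardLP} is feasible and, by weak duality, bounded below, hence has an optimal basic solution by Theorem~\ref{thm:optFeas}(ii), and then invoke direction (a) above.

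The step I expect to be the main obstacle is the assertion that an \emph{optimal} basic feasible solution can be chosen with nonnegative reduced costs (equivalently, with $\lambda(I)$ dual feasible). In the nondegenerate case this is immediate — a negative reduced cost $c_j-\lambda(I)^TA_j<0$ lets one move along the feasible edge direction $d$ with $d_j=1$, $d_I=-(A_I)^{-1}A_j$ and strictly decrease $c^Tx$ — but under degeneracy no such improving step need exist, and this is precisely where genuine content enters. I would resolve it by appealing to the constructive proof of Theorem~\ref{thm:optFeas} (the simplex method with Bland's rule terminates at a basis with nonnegative reduced costs) or, equivalently, by Farkas' lemma applied to the cone of feasible directions at $x(I,b)$; the feasibility claim needed in (b) of the converse direction is likewise the classical Farkas/separating-hyperplane ingredient at the heart of strong duality.
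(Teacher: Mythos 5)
Your proof is correct and follows the standard textbook route. The paper does not actually prove Theorem~\ref{thm:strongduality} itself --- it is stated with a citation to Luenberger's textbook --- and your argument (weak duality, termination of the simplex method under an anti-cycling rule at a basis with nonnegative reduced costs so that $\lambda(I)$ is dual feasible, the direct computation $c^Tx(I,b)=c_I^T(A_I)^{-1}b=b^T\lambda(I)$ for part (ii), and a dualization/symmetry argument for the converse of part (i)) is exactly the classical proof that the cited reference supplies.
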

In view of the preceding two theorems, we see that to each linear program \eqref{eq:standardLP} and \eqref{eq:standardDP} is associated a collection of feasible bases that are possibly but not necessarily linked by strong duality. The question arises if there always exists a common basis $I$ such that $x(I,b)$ and $\lambda(I)$ are both primal and dual basic feasible  solutions and hence also optimal, respectively. To answer that question we introduce
\begin{equation}\label{eq:sets}
\begin{split}
P(b)&\coloneqq \left\{x\in\R^d \, \mid\, Ax=b,\, x\geq 0\right\},
\\
OPT(b)
&\coloneqq \left\{x^{\star}\in P(b)\, \mid \, c^Tx^{\star}=\inf_{x\in P(b)}c^Tx\right\}
\end{split}
\end{equation}
the feasibility and optimality set for the linear program \eqref{eq:standardLP}, respectively. To alleviate the notation, and since $A$ and $c$ will generally be fixed, the dependence of $P(b)$ and $OPT(b)$ on $A$ and $c$ is suppressed. We state our first assumption.
\begin{equation}\label{ass:optBounded}\tag{\textbf{A1}}
\textit{The set } OPT(b) \textit{ is non-empty and bounded.}
\end{equation}
Recall that the convex hull of a collection of vectors $\{x_1,\ldots,x_K\}\subset \mathbb{R}^d$ is the set of all possible convex combinations of them.

\begin{lem}\label{lem:rulesOnly}
Consider the primal linear program \eqref{eq:standardLP} and assume \eqref{ass:optBounded} holds. Then for any $\tilde{b}\in \mathbb{R}^m$ either one of the following statements is correct.
\begin{itemize}
\item[(i)] The feasible set $P(\tilde{b})=\{x\in\R^d\, \mid \,Ax=\tilde{b},x\geq 0\}$ is empty.
\item[(ii)] The set of minimizers $OPT(\tilde{b})$ is non-empty and bounded. Moreover, it is equal to the convex hull of the set
\begin{equation*}
\left\lbrace x(I,\tilde{b})\, \mid\, I \text{ primal and dual feasible basis for $(\text{P}_{\tilde{b}})$ and $(\text{D}_{\tilde{b}})$ }\right\rbrace.
\end{equation*}
\end{itemize}
\end{lem}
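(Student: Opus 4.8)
The plan is to reduce everything to the two structural theorems already available: the fundamental theorem of linear programming (Theorem \ref{thm:optFeas}) and strong duality (Theorem \ref{thm:strongduality}). Fix $\tilde b\in\R^m$. If $P(\tilde b)=\emptyset$ we are in case (i) and there is nothing to prove, so assume $P(\tilde b)\neq\emptyset$; the task is to establish (ii). The first step is to show that $OPT(\tilde b)$ is non-empty. Here the role of \eqref{ass:optBounded} is crucial: boundedness of $OPT(b)$ for the reference vector $b$ forces the dual feasible region $\{\lambda\mid \lambda^TA\le c^T\}$ to be non-empty (otherwise, by strong duality / the fundamental theorem, the primal would be unbounded whenever feasible, contradicting boundedness of $OPT(b)$), and this dual region does not depend on $\tilde b$. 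Hence $(\text{D}_{\tilde b})$ is feasible, so its value is bounded above by weak duality applied to any point of $P(\tilde b)$; by Theorem \ref{thm:strongduality}(i) the primal $(\text{P}_{\tilde b})$ then attains a finite optimum, i.e.\ $OPT(\tilde b)\neq\emptyset$.

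The second step is boundedness of $OPT(\tilde b)$. The recession cone of $P(\tilde b)$ is $\{x\ge 0\mid Ax=0\}$, which is independent of the right-hand side; so $OPT(\tilde b)$ is unbounded for some $\tilde b$ iff this cone contains a ray $x$ with $c^Tx=0$ along which the objective does not increase, and that would already make $OPT(b)$ unbounded whenever $(\text{P}_b)$ is feasible — contradicting \eqref{ass:optBounded} once we note (as in step one) that $(\text{P}_b)$ is feasible. Thus $OPT(\tilde b)$ is a non-empty bounded polyhedron, hence the convex hull of its finitely many vertices.

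The third step identifies those vertices. Each vertex $x^\star$ of $OPT(\tilde b)$ is a basic feasible solution of $(\text{P}_{\tilde b})$, so $x^\star=x(I,\tilde b)$ for some primal feasible basis $I$; since $x(I,\tilde b)$ is moreover primal optimal, strong duality guarantees a dual optimal basic solution, and one checks that the basis $I$ itself is dual feasible (the complementary slackness / reduced-cost optimality criterion for a basic optimal solution says exactly that the reduced costs $c^T - \lambda(I)^TA$ are nonnegative, i.e.\ $\lambda(I)$ is dual feasible). Conversely, if $I$ is simultaneously primal feasible for $(\text{P}_{\tilde b})$ and dual feasible for $(\text{D}_{\tilde b})$, then Theorem \ref{thm:strongduality}(ii) gives that $x(I,\tilde b)$ is primal optimal, i.e.\ $x(I,\tilde b)\in OPT(\tilde b)$. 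Therefore $OPT(\tilde b)$ equals the convex hull of $\{x(I,\tilde b)\mid I \text{ primal and dual feasible for }(\text{P}_{\tilde b}),(\text{D}_{\tilde b})\}$, noting that including non-vertex points of this form does not enlarge the convex hull.

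The step I expect to be the main obstacle is the first one: carefully transferring the hypothesis — which is stated only for the fixed $b$ — into a statement about the $\tilde b$-independent objects (dual feasibility region, primal recession cone) so that it can be re-deployed for arbitrary $\tilde b$. One must be slightly careful that \eqref{ass:optBounded} presupposes $OPT(b)\neq\emptyset$, hence $P(b)\neq\emptyset$, which is what licenses the recession-cone argument; everything else is a routine application of the quoted duality theorems.
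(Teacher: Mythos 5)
Your first two steps are sound. The nonemptiness argument, transferring dual feasibility (which is independent of $\tilde{b}$) from the reference problem and invoking weak/strong duality, is correct, and your boundedness argument via the recession cone of $OPT(\tilde{b})$ is actually cleaner than the paper's, which detours through two auxiliary linear programs $(\text{D}^1)$, $(\text{D}^2)$ to bound each coordinate of $OPT(\tilde{b})$ separately.

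The third step has a genuine gap. You assert that for a primal feasible basis $I$ representing a vertex $x^\star$ of $OPT(\tilde{b})$, ``one checks that the basis $I$ itself is dual feasible'' because the reduced-cost criterion ``says exactly that the reduced costs $c^T-\lambda(I)^TA$ are nonnegative.'' This is false under degeneracy: nonnegativity of the reduced costs of a particular basis $I$ is a \emph{sufficient} condition for optimality of $x(I,\tilde{b})$, not a property that every primal feasible basis representing an optimal vertex must possess. As a counterexample, take $m=2$, $d=3$, $A=\begin{pmatrix}1&1&0\\0&1&1\end{pmatrix}$, $\tilde{b}=(1,0)^T$, $c=(0,1,0)^T$. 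The unique (and degenerate) optimal point $x^\star=(1,0,0)$ is represented by both $I=\{1,2\}$ and $I=\{1,3\}$. The first gives $\lambda(I)=(0,1)^T$, which violates the dual constraint indexed by $3$ since $[\lambda(I)^TA]_3=1>0=c_3$; only the second yields a dual feasible $\lambda(I)=(0,0)^T$. Hence the inclusion $OPT(\tilde{b})\subseteq\Conv\{x(I,\tilde{b})\,\vert\,I\text{ primal and dual feasible}\}$ requires showing that each optimal vertex admits \emph{at least one} representing basis that is also dual feasible; this existence claim is not automatic. The paper proves it with a simplex argument: if $\lambda(I)$ is dual infeasible there is a column with negative reduced cost, and running the simplex method from basis $I$ with an anti-cycling pivot rule (e.g.\ Bland's rule) cannot strictly decrease the already-optimal cost and hence cannot move off $x(I,\tilde{b})$, so it must terminate at a different basis $J$ with $x(J,\tilde{b})=x(I,\tilde{b})$ and $\lambda(J)$ dual feasible. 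You need to supply this (or an equivalent) argument to close the gap.
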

Notice that the preceding lemma proves that in order to find a primal optimal basic solution it suffices to consider all dual feasible bases $I$ and check whether $x(I,b)$ is primal feasible. This observation leads to our bases driven approach underlying the analysis for limit laws of empirical optimal solutions.

\begin{rem}[Splitting of the Bases Collection]\label{rem:basessplitting}
Suppose that $I_1,\ldots,I_N$ are all the dual feasible bases for the dual linear program \eqref{eq:standardDP}, i.e., $\lambda(I_j)^T=c_{I_j}^TA_{I_j}^{-1}$ constitutes a dual basic feasible solution for $1\leq j \leq N$. We can partition this collection of bases into two subsets containing those bases that also induce a primal feasible basic solution for \eqref{eq:standardLP}, i.e., 
\begin{equation*}
I_1,\ldots,I_K \text{ induce primal and dual basic feasible solution } (K\leq N)
\end{equation*}
and those that lead to primal basic infeasible solutions
\begin{equation*}
I_{K+1},\ldots,I_N \text{ induce dual basic feasible but primal basic infeasible solution}.
\end{equation*}
Notice by Theorem \ref{thm:strongduality} that $x(I_k,b)$ is a primal basic optimal solution for all $k\leq K$.
\end{rem}
We also use the abbreviation $x^{\star}(b)$ to denote any optimal solution for the primal program \eqref{eq:standardLP}. An important assumption for our central limit theorem will be the following.
\begin{equation}\label{ass:uniquexb}\tag{\textbf{A2}}
 \textit{An optimal solution }x^\star(b) \textit{ for \eqref{eq:standardLP} exists and is unique.}
\end{equation}  
Clearly, assumption \eqref{ass:uniquexb} implies assumption \eqref{ass:optBounded}. Finally, we recall the definition for degeneracy of primal and dual basic feasible solutions. A primal basic feasible solution $x(I,b)$ is \emph{degenerate} if less than $m$ of its coordinates are nonzero. Similarly, a dual basic feasible solution $\lambda(I)$ is degenerate if more than $m$ of the $d$ inequalities $\lambda(I)^TA\leq c$ hold as equalities. The following proposition links the concept of degeneracy of optimal solutions for a linear program to uniqueness of optimal solutions for its related dual linear program and vice versa.

\begin{prop}\label{prop:slackness}
Consider the linear program \eqref{eq:standardLP} and its dual \eqref{eq:standardDP}.
\begin{enumerate}
\item[(i)] If \eqref{eq:standardLP} (resp. \eqref{eq:standardDP}) has a nondegenerate optimal basic solution, then \eqref{eq:standardDP} (resp. \eqref{eq:standardLP}) has a unique solution.
\item[(ii)] If \eqref{eq:standardLP} (resp. \eqref{eq:standardDP}) has a unique nondegenerate optimal basic solution, then \eqref{eq:standardDP} (resp. \eqref{eq:standardLP}) has a unique nondegenerate optimal solution.
\item[(iii)] If \eqref{eq:standardLP} (resp. \eqref{eq:standardDP}) has a unique degenerate optimal basic solution, then \eqref{eq:standardDP} (resp. \eqref{eq:standardLP}) has multiple solutions.
\end{enumerate}
\end{prop}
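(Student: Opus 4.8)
\textbf{Proof plan for Proposition \ref{prop:slackness}.}

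The plan is to derive everything from complementary slackness together with the characterization of vertices via bases, treating the primal statements and invoking symmetry (the dual of \eqref{eq:standardDP} is \eqref{eq:standardLP}) for the ``resp.'' versions. Throughout, fix an optimal basis $I$ for \eqref{eq:standardLP} and let $x^\star = x(I,b)$ and $\lambda^\star = \lambda(I)$ be the associated primal and dual optimal basic solutions, which by Theorem \ref{thm:strongduality}(ii) indeed form a complementary optimal pair, so that $x^\star_j(c^T - (\lambda^\star)^TA)_j = 0$ for all $j\in[d]$. The workhorse observation is a counting identity: the number of indices $j$ with $x^\star_j > 0$ plus the number of indices $j$ with $(c^T - (\lambda^\star)^TA)_j > 0$ is at most $d$, with equality precisely in the ``strictly complementary'' situation; and nondegeneracy of $x^\star$ forces the first count to be exactly $m$, while nondegeneracy of $\lambda^\star$ forces the second to be exactly $d-m$.

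For part (i), suppose $x^\star$ is a nondegenerate optimal basic solution, so it has exactly $m$ positive coordinates, necessarily those in $I$. I would argue that any dual optimal $\lambda$ must satisfy $(c^T - \lambda^TA)_j = 0$ for every $j\in I$ by complementary slackness against $x^\star$; since $A_I$ is invertible this pins down $\lambda^T A_I = c_I^T$, hence $\lambda = \lambda(I)$ is the unique dual optimal solution. For part (ii), assume moreover that $x^\star$ is the \emph{unique} primal optimal solution. By (i) the dual optimum $\lambda(I)$ is unique; it remains to show it is nondegenerate, i.e.\ that exactly $m$ of the constraints $\lambda(I)^TA \le c^T$ are tight, equivalently $(c^T - \lambda(I)^TA)_j > 0$ for all $j\notin I$. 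Here I would use uniqueness of $x^\star$: if some $j_0\notin I$ had zero reduced cost, one could enter column $j_0$ into the basis and move along an edge of $OPT(b)$ (or, if the step length is zero, obtain a distinct optimal basis) to produce a second primal optimal solution, or at any rate a primal optimal solution with support not contained in $I$, contradicting uniqueness together with nondegeneracy of $x^\star$. Spelling out the edge/ray argument cleanly — in particular handling the case where increasing $x_{j_0}$ is blocked immediately — is the step I expect to be the main obstacle, and it is essentially the converse direction of the vertex–optimal-basis correspondence; Lemma \ref{lem:rulesOnly} is useful here since it identifies $OPT(b)$ with the convex hull of the $x(I_k,b)$.

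For part (iii), assume $x^\star = x(I,b)$ is the unique optimal solution and is degenerate, so it has fewer than $m$ positive coordinates. Then the set $S = \{j : x^\star_j > 0\}$ has $|S| < m$, and complementary slackness only forces $(c^T - \lambda^TA)_j = 0$ for $j\in S$, which is $|S| < m$ linearly independent (as columns of $A$) conditions on $\lambda\in\R^m$ — hence the dual optimal face, being cut out of the (nonempty, by strong duality) set of dual optimal solutions by these under-determined equalities, cannot be a single point: concretely, $\lambda(I)$ lies in it, and one can perturb $\lambda(I)$ in the nontrivial subspace $\{\mu : \mu^T A_S = 0\}$ while staying dual feasible (the strict inequalities $(c^T-\lambda(I)^TA)_j>0$ for $j\notin I$ persist under small perturbations, and equalities indexed by $I\setminus S$ are what get relaxed), producing a second dual optimal solution. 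The only delicate point is ensuring the perturbation can be taken to preserve feasibility of the inequalities indexed by $I\setminus S$ — but those are exactly the tight constraints we are allowed to loosen, so moving in the appropriate one-sided direction keeps feasibility; this is where the argument must be done with a little care about signs, but it is routine once the reduced-cost bookkeeping is set up. Finally, each primal (resp.\ dual) claim transfers to the dual (resp.\ primal) statement verbatim because \eqref{eq:standardDP} is itself a linear program whose dual is equivalent to \eqref{eq:standardLP}, so no separate argument is needed.
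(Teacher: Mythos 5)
Your part (i) is exactly the paper's argument, and your part (ii), while taking a simplex-pivot route instead of the paper's use of strict complementary slackness, is sound: under nondegeneracy of $x^\star$ the ratio test always produces a strictly positive step (or an unbounded ray), so the hedge about ``step length zero'' is in fact moot. The paper's strategy for (ii) and (iii) is different: it first establishes a strict complementary slackness lemma — there exist optimal $x^\star,\lambda^\star$ with, for \emph{every} $j$, either $x_j^\star>0$ or $A_j^T\lambda^\star<c_j$ — and then reads off both nondegeneracy of the dual optimum (for (ii)) and the existence of a non-basic dual optimum (for (iii)) directly from that lemma. This is worth knowing, because it is precisely where your proposal for (iii) breaks.

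The gap in (iii) is concrete. You attempt to show multiplicity of dual optima by perturbing the particular optimal basic solution $\lambda(I)$ within the kernel $V=\{\mu:\,A_S^T\mu=0\}$. But $\lambda(I)$ is by definition a vertex of the dual feasible polyhedron, and it can also be a vertex (extreme point) of the dual \emph{optimal} face, in which case no nonzero perturbation keeps you in that face. Your feasibility analysis only tracks the tight constraints coming from $I\setminus S$; it ignores that $\lambda(I)$ may itself be degenerate, i.e.\ there may exist $j\notin I$ with $A_j^T\lambda(I)=c_j$. Once those extra active constraints are included, the cone $\{\mu\in V:\,A_j^T\mu\le 0\ \text{for all tight }j\notin S\}$ can reduce to $\{0\}$, so no direction of perturbation is admissible, and the argument stalls even though the optimal face genuinely has positive dimension. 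What the strict complementary slackness lemma supplies, and what your argument is missing, is a \emph{different} dual optimum $\lambda^\star$ (generally not $\lambda(I)$) sitting in the relative interior of the optimal face with strictly fewer than $m$ tight constraints; since such a $\lambda^\star$ cannot be basic while a basic dual optimum always exists (Theorem \ref{thm:optFeas}), there must be at least two dual optima. So either prove strict complementary slackness as an auxiliary lemma and use that, or replace the perturbation of $\lambda(I)$ by an argument that produces a non-extreme point of the optimal face; perturbing a vertex does not do the job.
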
 
Many fundamental results in linear programming simplify when the optimal solutions are nondegenerate. This effect is even more remarkable in our stochastic analysis, as will be seen below. We introduce the assumption
\begin{equation}\label{ass:dualnondegen}\tag{\textbf{A3}}
 \lambda(I_j)\neq \lambda(I_k), \, 1\leq j<k\leq K,
\end{equation} 
where again $I_1,\ldots,I_K$ enumerate all bases $I$ such that $\lambda(I)$ is a dual basic feasible and $x(I,b)$ is a primal optimal basic solution (see Remark \ref{rem:basessplitting}). Assumption  \eqref{ass:dualnondegen} is weaker than nondegeneracy of all optimal dual basic solutions for \eqref{eq:standardDP}.
\begin{lem}\label{lem:dualnondegenerate}
Suppose \eqref{ass:optBounded} holds. Then assumption \eqref{ass:dualnondegen} is equivalent to nondegeneracy of all dual optimal basic solutions.
\end{lem}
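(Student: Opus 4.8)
We prove the equivalent (contrapositive) statement: \eqref{ass:dualnondegen} fails if and only if some dual optimal basic solution is degenerate. One implication is immediate. If \eqref{ass:dualnondegen} fails, then $\lambda(I_j)=\lambda(I_k)=:\mu$ for some $1\le j<k\le K$; by Theorem~\ref{thm:strongduality}(ii) this $\mu=\lambda(I_j)$ is a dual optimal basic solution, and since $\mu^TA_\ell=c_\ell$ for every $\ell\in I_j\cup I_k$ while $I_j\neq I_k$ are distinct $m$-element sets, strictly more than $m$ of the inequalities $\mu^TA\le c^T$ hold as equalities, so $\mu$ is degenerate. It remains to prove the converse.

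Let $\mu$ be a degenerate dual optimal basic solution, set $S:=\{\ell\in[d]\mid\mu^TA_\ell=c_\ell\}$ (so $|S|>m$), and let $\mathcal B(\mu)$ be the collection of bases $I\subseteq S$. Writing $\mu=\lambda(I_0)$ for a dual feasible basis $I_0$ shows $I_0\in\mathcal B(\mu)$ (hence $\mathrm{rank}(A_S)=m$), and every $I\in\mathcal B(\mu)$ satisfies $\lambda(I)=\mu$, being the unique solution of $\lambda^TA_I=c_I$. As $\mu$ is dual feasible, any $I\in\mathcal B(\mu)$ with $x(I,b)\ge0$ is a primal and dual feasible basis, that is, one of $I_1,\dots,I_K$; so it suffices to exhibit two distinct primal feasible members of $\mathcal B(\mu)$. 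To this end I would pass to the face program $(\widetilde{\mathrm P})\colon \min\{c_S^Tx_S\mid A_Sx_S=b,\ x_S\ge0\}$. Complementary slackness with $\mu$ shows every optimal solution of \eqref{eq:standardLP} is supported in $S$; hence $(\widetilde{\mathrm P})$ has optimal value $c(b)$, its optimal set coincides (after the obvious embedding $\R^S\hookrightarrow\R^d$) with $OPT(b)$, which is bounded by \eqref{ass:optBounded}, and every basic feasible solution of $(\widetilde{\mathrm P})$ has objective value $c(b)$, so it is optimal and equals $x(I,b)$ for some $I\in\mathcal B(\mu)$ with $x(I,b)\ge0$. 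By Theorem~\ref{thm:optFeas} at least one such $I$ exists.

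Assume, for contradiction, that exactly one member $\hat I$ of $\mathcal B(\mu)$ is primal feasible. Then $(\widetilde{\mathrm P})$ has a unique basic feasible solution $x^\star=x(\hat I,b)$, which is therefore optimal, and since the optimal set of $(\widetilde{\mathrm P})$ is a nonempty bounded polyhedron with $x^\star$ as its only vertex, $OPT(b)=\{x^\star\}$. Put $T:=\supp(x^\star)\subseteq S$ (the columns $A_\ell$, $\ell\in T$, being independent) and $V:=\mathrm{span}\{A_\ell\mid\ell\in T\}$; note $S\setminus T\neq\emptyset$ since $|S|>m\ge|T|$. If some $A_\ell$ with $\ell\in S\setminus T$ lies in $V$, write $A_\ell=\sum_{j\in T}\gamma_jA_j$ and set $d:=e_\ell-\sum_{j\in T}\gamma_je_j$; then $Ad=0$ and $c^Td=\mu^T(A_\ell-\sum_{j\in T}\gamma_jA_j)=0$ because $\ell$ and all $j\in T$ lie in $S$, while $x^\star+td\ge0$ for small $t>0$ since $x^\star_\ell=0$ and $x^\star_j>0$ on $T$; thus $x^\star+td\in OPT(b)\setminus\{x^\star\}$, a contradiction. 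If instead no $A_\ell$ with $\ell\in S\setminus T$ lies in $V$, then the images of these columns in $\R^m/V$ are nonzero and span the $(m-|T|)$-dimensional space $\R^m/V$ (using $\mathrm{rank}(A_S)=m$), while there are more than $m-|T|$ of them; a basis-exchange argument then produces distinct $J_1,J_2\subseteq S\setminus T$ with $A_{T\cup J_1}$ and $A_{T\cup J_2}$ invertible, and $T\cup J_1,T\cup J_2\in\mathcal B(\mu)$ are two distinct primal feasible bases (each gives $x(\cdot,b)=x^\star\ge0$), contradicting the uniqueness of $\hat I$. Either way we find two distinct primal feasible members of $\mathcal B(\mu)$, so \eqref{ass:dualnondegen} fails, finishing the proof.

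The main obstacle is precisely this last step — excluding that a degenerate dual optimal vertex is reached through only one optimal basis. Intuitively \eqref{ass:optBounded} is exactly what forbids this: boundedness of $OPT(b)$ leaves room only for extra primal optimal vertices (so the face program is not a single point) or for a genuinely degenerate primal optimal vertex realized by several bases, and the zero-cost perturbation $x^\star\mapsto x^\star+td$ eliminates the one remaining configuration. Two points require care: keeping $\ell\notin T$ so that the perturbation stays feasible, and verifying that $(\widetilde{\mathrm P})$ is nonempty and bounded before concluding its optimal set is a singleton. One could also split the dichotomy of the last step using Proposition~\ref{prop:slackness}(ii) applied to $(\widetilde{\mathrm P})$, but the perturbation argument seems unavoidable in the degenerate sub-case.
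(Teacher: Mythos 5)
Your proof is correct, and it takes a genuinely different route from the paper's. The paper works directly with a degenerate $\lambda(I_j)$, $j\le K$: it expands every active non-basic column $a_z$, $z\in L\setminus I_j$, in the columns of $A_{I_j}$ and does a sign analysis --- a nonzero coefficient on a degenerate-zero coordinate, or a positive coefficient on a positive coordinate, each yields a sibling optimal basis by a pivot (contradicting \eqref{ass:dualnondegen}), and the remaining sign pattern yields an unbounded primal optimal ray (contradicting \eqref{ass:optBounded}); a separate paragraph then rules out degenerate dual optimal basic solutions $\lambda(I_l)$ with $l>K$. You instead pass to the face program $(\widetilde{\mathrm P})$ on the active set $S$ of an arbitrary degenerate dual optimal basic solution $\mu$, identify its feasible set with $OPT(b)$ (on which the cost is constant), and observe that if only one member of $\mathcal B(\mu)$ were primal feasible then $OPT(b)$ would be a bounded polytope with a single vertex, hence a singleton. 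Your Case~1 perturbation $x^\star+td$ then contradicts the singleton directly, with no ratio test or sign check needed; your Case~2 is a matroid basis exchange in $\R^m/\mathrm{span}(A_{\supp x^\star})$ and plays the role of the paper's first sub-case. This buys two simplifications: \eqref{ass:optBounded} enters once, through the clean fact that a bounded polytope with a unique vertex is a point, rather than through a ray construction buried in a sub-sub-case; and the bases $I_l$, $l>K$, need no separate treatment, since your argument only uses that $\mu$ is degenerate and dual optimal, not that its witnessing basis is primal feasible.
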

Assumptions \eqref{ass:optBounded}, \eqref{ass:uniquexb} and \eqref{ass:dualnondegen} are purely deterministic and only depend on the parameters $(A,b,c)\in \mathbb{R}^{m\times d}\times \mathbb{R}^m\times \mathbb{R}^d$ defining the primal linear program.\\
\textbf{Stochastic setting.} Introducing randomness in problems \eqref{eq:standardLP} and \eqref{eq:standardDP}, we suppose to have incomplete knowledge of the vector $b\in\R^m$ and replace it by a (consistent) estimator $b_n$, e.g., based on a sample of size $n$ independently drawn from a distribution with mean $b$. This defines empirical primal and dual counterparts (\hyperref[eq:standardLP]{$\text{P}_{b_n}$}) and (\hyperref[eq:standardDP]{$\text{D}_{b_n}$}), respectively. We allow the more general case that only the first $m_0\in\{1,\dots,m\}$ coordinates\footnote{One may assume at first reading that $m_0=m$; the additional generality will turn useful for the one-sample case naturally arising in optimal transport in Section \ref{sec:OT}.} of $b$ are unknown and assume the existence of a sequence of random vectors $b_n=(b_n^{m_0},[b]_{m-m_0})\in\R^{m_0}\times\R^{m-m_0}$ converging to $b$ at rate $\frac{1}{r_n}\to 0$ as $n$ tends to infinity:
\begin{equation}\label{ass:cltforb}\tag{\textbf{B1}}
\begin{split}
&G_n^{m_0}\coloneqq r_n(b_n^{m_0}-b)\xrightarrow[]{D} G=(G^{m_0},0_{m-m_0})\\
 \textit{ with } &G^{m_0} \textit{ absolutely continuous w.r.t. Lebesgue measure on } \mathbb{R}^{m_0},
\end{split}
\end{equation} 
where \begin{small}$\xrightarrow[]{D}$\end{small} denotes convergence is distribution. In a typical central limit theorem type scenario (see Section \ref{sec:OT}), $r_n=\sqrt{n}$ and $G^{m_0}$ is a centred Gaussian random vector in $\mathbb{R}^{m_0}$, assumed to have a nonsingular covariance matrix. Whenever $m=m_0$, we suppress dependency of $m_0$ and write $b_n^{m_0}=b_n$ and $G^{m_0}=G$. Notice that assumption \eqref{ass:cltforb} implies $b_n$ to be a (weakly) consistent estimator for $b$ meaning that for any $\epsilon>0$ the probability $\PR(\Vert b_n-b\Vert>\epsilon)$ converges to zero as $n$ tends to infinity. To avoid pathological cases we impose the last assumption that asymptotically an optimal solution $x^\star(b_n)$ for the primal $(\text{P}_{b_n})$ exists.

\begin{equation}\label{ass:feasforn}\tag{\textbf{B2}}
\lim_{n\to\infty}\PR\left(x^{\star}(b_n) \textrm{ exists}\right)= 1.
\end{equation}
We discuss all stated assumptions and their implications in more detail in Section \ref{sec:Assumptions}.  

\section{Main Results}\label{sec:MainResults}

According to Remark \ref{rem:basessplitting} we may split the collection of all dual feasible bases $I_1,\ldots,I_N$ for \eqref{eq:standardDP} by those that are also primal feasible bases $I_1,\ldots,I_K$ for \eqref{eq:standardLP} and those that are only dual feasible bases $I_{K+1},\ldots,I_N$. In view of Lemma \ref{lem:rulesOnly}, whenever an optimal solution $x^{\star}(b_n)$ for (\hyperref[eq:standardLP]{$\text{P}_{b_n}$}) exists, it takes the form
\begin{equation*}
x^{\star}(b_n)
=\sum_{k\in\mathcal K}
(\alpha_n^\K)_k x(I_k,b_n)
\coloneqq \alpha_n^\K\otimes x(I_\K,b_n),
\end{equation*}
where $\K$ is a non-empty subset of $[N]\coloneqq \{1,\dots,N\}$ and $\alpha_n^\K$ is a random vector in the unit simplex $\Delta_{|\K|}\coloneqq \left\lbrace\alpha\in\R_+^{|\K|}\, \mid \,\|\alpha\|_1=1\right\rbrace$.

\subsection{Distributional Limits}

\begin{thm}\label{thm:limitGeneral}
Suppose assumptions \eqref{ass:uniquexb}, \eqref{ass:cltforb}, and \eqref{ass:feasforn} hold, and let $x^{\star}(b_n)$ be any (measurable) choice of an optimal solution for (\hyperref[eq:standardLP]{$\text{P}_{b_n}$}). Further, assume that for all $\K$, the random vector $\left(\alpha_n^\K, G_n\right)$ converges jointly in distribution as $n$ tends to infinity to $(\alpha^\K,G)$ on $\Delta_{\vert \K\vert}\times \mathbb{R}^m$. Then there exist closed convex cones $H_1,\dots,H_K\subseteq\R^{m_0}$, each of which is an intersection of $m-\vert\supp\, x^{\star}(b)\vert$ half-spaces in $\R^{m_0}$, passing through the origin such that
\begin{equation*}
r_n\left(x^{\star}(b_n) - x^{\star}(b)\right)
\xrightarrow[]{D} M(G^{m_0})\coloneqq
\sum_\K \mathbbm{1}_{G^{m_0}\in H_\K\setminus \cup_{k\notin\K}H_k}\,\alpha^\K \otimes x(I_\K,G)
\, \in \R^d.
\end{equation*}
The sum runs over non-empty subsets $\K$ of $[K]=\{1,\dots,K\}$ and $H_\K\coloneqq\cap_{k\in\K}H_k$. \\
In particular, if the primal optimal solution $x^{\star}(b)$ is nondegenerate, then $K=1$, there exists a unique basis $I_1$, and the limit reads as 
\begin{equation*}
r_n\left(x^{\star}(b_n) - x^{\star}(b)\right)
\xrightarrow[]{D} x(I_1,G)
\, \in\R^d.
\end{equation*}
Hence, it is an (invertible) linear function of $G$ and Gaussian if $G$ is Gaussian.
\end{thm}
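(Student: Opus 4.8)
The plan is to reduce the limit law to a statement about which bases $I_k$, $k \in [K]$, are \emph{active} for the perturbed vector $b_n$, i.e. which ones still yield a primal feasible (hence optimal) basic solution when $b$ is replaced by $b_n$. First I would note that, by assumption \eqref{ass:uniquexb}, $x^\star(b) = x(I_k,b)$ is the same point for every $k \le K$; call its support $S = \supp\, x^\star(b)$, with $|S| \le m$. For each $k \le K$ the basic solution $x(I_k,b)$ has zero coordinates exactly on $I_k \setminus S$ (these are $m - |S|$ coordinates), and the map $b' \mapsto x(I_k, b') = \Aug_{I_k}[(A_{I_k})^{-1} b']$ is linear. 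Feasibility of $x(I_k, b_n)$ — that is, $x(I_k,b_n) \ge 0$ — therefore amounts, for $b_n$ close to $b$, to the $m - |S|$ linear inequalities indexed by $I_k \setminus S$ being satisfied; the coordinates in $S$ are strictly positive at $b$ and stay positive with probability tending to one. Writing each such inequality in terms of $r_n(b_n - b) = G_n^{m_0}$ (the last $m - m_0$ coordinates of the perturbation being zero), I get that $x(I_k, b_n) \ge 0$ is asymptotically equivalent to $G_n^{m_0}$ lying in a closed convex cone $H_k \subseteq \R^{m_0}$ through the origin, cut out by exactly $m - |S|$ half-spaces. This construction of the cones $H_k$ is the first key step.

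Next I would identify, for a given realization with $G^{m_0} \notin \partial H_k$ for all $k$ (a null-probability exclusion, justified by the absolute continuity in \eqref{ass:cltforb} and the fact that each $\partial H_k$ is a finite union of hyperplanes through the origin, hence Lebesgue-null), the random index set $\K_n := \{ k \le K : x(I_k, b_n) \text{ is primal feasible}\}$. The argument above shows $\PR(\K_n = \K) \to \PR(G^{m_0} \in H_\K \setminus \bigcup_{k \notin \K} H_k)$ for each fixed $\K \subseteq [K]$, where $H_\K = \bigcap_{k \in \K} H_k$; and one checks these sets partition $\R^{m_0}$ up to a null set, so the events $\{\K_n = \K\}$ asymptotically partition the sample space. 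On the event $\{\K_n = \K\}$, Lemma \ref{lem:rulesOnly} forces $x^\star(b_n)$ to be a convex combination of the $x(I_k, b_n)$, $k \in \K$ — but note one must be careful that the \emph{dual}-only bases $I_{K+1},\dots,I_N$ never contribute, which holds because for $b_n$ near $b$ their basic solutions remain primal infeasible (they are strictly infeasible at $b$, again with probability $\to 1$); this pins down that the $\K$ appearing in the decomposition $x^\star(b_n) = \alpha_n^\K \otimes x(I_\K, b_n)$ is a subset of $[K]$. Combining with the linearity $x(I_k, b_n) - x(I_k, b) = x(I_k, b_n - b)$ and $\Aug$-linearity, on $\{\K_n = \K\}$ we get
\begin{equation*}
r_n(x^\star(b_n) - x^\star(b)) = r_n \sum_{k \in \K} (\alpha_n^\K)_k \big( x(I_k, b_n) - x(I_k,b) \big) = \sum_{k \in \K} (\alpha_n^\K)_k\, x(I_k, G_n) = \alpha_n^\K \otimes x(I_\K, G_n),
\end{equation*}
using $\sum_k (\alpha_n^\K)_k = 1$ and $x(I_k, b) = x^\star(b)$ for all $k \le K$.

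Finally I would assemble the limit by a partition-of-unity / Slutsky argument: write $r_n(x^\star(b_n) - x^\star(b)) = \sum_\K \mathbbm 1_{\K_n = \K}\, \alpha_n^\K \otimes x(I_\K, G_n)$ up to an event of vanishing probability, invoke the assumed joint convergence $(\alpha_n^\K, G_n) \xrightarrow{D} (\alpha^\K, G)$ together with the fact that $\mathbbm 1_{\{\K_n = \K\}}$ can be replaced asymptotically by $\mathbbm 1_{G^{m_0} \in H_\K \setminus \cup_{k \notin \K} H_k}$ (the boundary being $G$-null), and apply the continuous mapping theorem on the set of continuity points. This yields $r_n(x^\star(b_n) - x^\star(b)) \xrightarrow{D} \sum_\K \mathbbm 1_{G^{m_0} \in H_\K \setminus \cup_{k \notin \K} H_k}\, \alpha^\K \otimes x(I_\K, G)$. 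The nondegenerate specialization is then immediate: by Proposition \ref{prop:slackness}(ii)–(iii) (in the direction: $x^\star(b)$ nondegenerate $\Rightarrow$ the dual has a unique solution $\Rightarrow$, via Lemma \ref{lem:dualnondegenerate} and \eqref{ass:dualnondegen}, there is only one basis $I_1$ in play, i.e. $K = 1$), the only nonempty $\K \subseteq \{1\}$ is $\K = \{1\}$, $H_{\{1\}} = \R^{m_0}$ since $m - |S| = 0$ gives no constraints, $\alpha^{\{1\}} \equiv 1$, and the limit collapses to $x(I_1, G) = \Aug_{I_1}[(A_{I_1})^{-1} G]$, an invertible linear image of $G$, hence Gaussian when $G$ is.

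The main obstacle I anticipate is not any single inequality but the bookkeeping around the boundary sets and the dual-only bases: one must argue cleanly that (a) the decomposition $x^\star(b_n) = \alpha_n^\K \otimes x(I_\K, b_n)$ can always be taken with $\K \subseteq [K]$ rather than $[N]$, uniformly enough that the probabilities match up; (b) the sets $H_\K \setminus \bigcup_{k \notin \K} H_k$ genuinely partition $\R^{m_0}$ up to measure zero, which requires that the union $\bigcup_{k \le K} \partial H_k$ is $G$-negligible — this is exactly where absolute continuity of $G^{m_0}$ in \eqref{ass:cltforb} is essential; and (c) justifying the interchange of the indicator $\mathbbm 1_{\{\K_n = \K\}}$ with $\mathbbm 1_{\{G^{m_0} \in H_\K \setminus \cup_{k\notin\K} H_k\}}$ inside a weak limit, which is a standard but slightly delicate continuous-mapping argument that needs the almost-sure convergence representation or a direct $\liminf/\limsup$ estimate on the relevant events.
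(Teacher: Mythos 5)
Your proposal is correct and follows essentially the same route as the paper: partition on which bases are active (your $\K_n$ is the paper's $A_n^\K$), use the strict-infeasibility-at-$b$ argument (the paper's Lemma \ref{lem:feas}) to drop $k>K$, rewrite feasibility of $x(I_k,b_n)$ on the degenerate coordinates $I_k\setminus\supp\,x^\star(b)$ as the cone event $\{G_n^{m_0}\in H_k\}$, invoke absolute continuity so that $\partial H_k$ is $G$-null (the Portmanteau argument), and conclude with the continuous mapping theorem applied to the map $(\alpha,v)\mapsto\sum_\K\mathbbm 1_{v\in H_\K\setminus\cup_{k\notin\K}H_k}\,\alpha^\K\otimes x(I_\K,v)$. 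The only cosmetic difference is in the nondegenerate specialization, where you route through Proposition \ref{prop:slackness} and Lemma \ref{lem:dualnondegenerate} whereas the paper simply observes directly that nondegeneracy of $x^\star(b)$ forces a unique optimal basis and an empty index set $J_1$, hence $K=1$ and $H_1=\R^{m_0}$.
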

In Section \ref{sec:Assumptions} we discuss sufficient conditions for the joint distributional convergence of the random vector $\left( \alpha_n^\K,G_n\right)$. In short, if we use any linear program solver, such joint distributional convergence appears to be reasonable.\\
Notice that the structure of the limit law depends on the degree of degeneracy of the primal optimal solution $x^\star(b)$ for \eqref{eq:standardLP}. If $x^\star(b)$ is degenerate, then the sum in the limit law can consist of several summands. In contrast, for a nondegenerate optimal solution $x^\star(b)$, the limit law is simple and might even be a $d$-dimensional Gaussian random variable (with effective dimension $m$). In between these two cases is the situation that assumption \eqref{ass:dualnondegen} holds, which is related to the case that all dual optimal basic solutions for \eqref{eq:standardDP} are nondegenerate (see Lemma \ref{lem:dualnondegenerate}). The limit law can then be simplified, as the subsets $\K$ have to be singletons. In fact, the number of summands is exactly equal the number of dual optimal basic solutions.
\begin{thm}\label{thm:limit}
Suppose assumptions \eqref{ass:uniquexb}, \eqref{ass:dualnondegen}, \eqref{ass:cltforb}, and  \eqref{ass:feasforn} hold. Then any\footnote{There is no need to assume joint distributional convergence of $(\alpha_n^\K,G_n)$ as in Theorem \ref{thm:limitGeneral}.} (measurable) choice of $x^{\star}(b_n)$ satisfies
\begin{equation*}
r_n\left(x^{\star}(b_n) - x^{\star}(b)\right)
\xrightarrow[]{D} \sum_{k=1}^K \mathbbm{1}_{G\in H_k\setminus \cup_{j<k}H_j}\,x\left(I_k,G\right)
\, \in\R^d
\end{equation*}
with the $H_k$'s as given in Theorem \ref{thm:limitGeneral}.
\end{thm}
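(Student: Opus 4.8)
The plan is to derive Theorem \ref{thm:limit} as a corollary of Theorem \ref{thm:limitGeneral} by showing that, under the additional assumption \eqref{ass:dualnondegen}, the only subsets $\K\subseteq[K]$ that contribute to the sum are the singletons $\K=\{k\}$. First I would recall that by Lemma \ref{lem:dualnondegenerate}, assumption \eqref{ass:dualnondegen} is equivalent to nondegeneracy of all dual optimal basic solutions, and hence the dual basic solutions $\lambda(I_1),\dots,\lambda(I_K)$ are pairwise distinct. The key geometric object is the collection of cones $H_1,\dots,H_K$ from Theorem \ref{thm:limitGeneral}; I would want to identify $H_k$ concretely as the set of directions $g\in\R^{m_0}$ such that $x(I_k,(g,0_{m-m_0}))$ keeps the sign pattern forced by feasibility of $x(I_k,b)$, i.e.\ the half-spaces defining $H_k$ correspond precisely to the coordinates in $\supp x^\star(b)$ that vanish in the degenerate primal optimal basic solution (there are $m-|\supp x^\star(b)|$ of them). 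Under \eqref{ass:dualnondegen}, each such constraint is "active but nondegenerate," which should force the cones $H_k$ to have pairwise disjoint interiors and to cover a neighbourhood of the origin.

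The main step is to show $\PR(G^{m_0}\in H_\K\setminus\cup_{k\notin\K}H_k)=0$ whenever $|\K|\geq 2$. Here I would argue that if $|\K|\geq 2$, then $H_\K=\cap_{k\in\K}H_k$ is an intersection of at least two distinct full-dimensional cones whose interiors are disjoint; hence $H_\K$ is contained in the union of the boundaries $\partial H_k$, which is a finite union of hyperplanes (and lower-dimensional faces) in $\R^{m_0}$, a Lebesgue-null set. Since $G^{m_0}$ is absolutely continuous with respect to Lebesgue measure by \eqref{ass:cltforb}, the indicator $\mathbbm{1}_{G^{m_0}\in H_\K\setminus\cup_{k\notin\K}H_k}$ vanishes almost surely for $|\K|\geq 2$, so those summands drop out of $M(G^{m_0})$. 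What remains is $\sum_{k=1}^{K}\mathbbm{1}_{G^{m_0}\in H_k\setminus\cup_{j\neq k}H_j}\,x(I_k,G)$; since the $H_k$'s cover (up to a null set) all of $\R^{m_0}$ with pairwise-null overlaps, one may freely replace $H_k\setminus\cup_{j\neq k}H_j$ by $H_k\setminus\cup_{j<k}H_j$ without changing the distribution, yielding the stated form. This rewriting also makes the events disjoint by construction, which is why the joint-convergence hypothesis on $(\alpha_n^\K,G_n)$ is no longer needed: on each event $\{G\in H_k\setminus\cup_{j<k}H_j\}$ the optimal solution is forced to equal the single primal basic solution $x(I_k,b_n)$ (so $\alpha_n^{\{k\}}\equiv 1$), and continuous mapping applied to $G_n\xrightarrow{D}G$ together with the linearity of $b\mapsto x(I_k,b)$ gives the limit directly.

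The step I expect to be the main obstacle is establishing rigorously that nondegeneracy of the dual optimal basic solutions forces the cones $H_k$ to have pairwise disjoint interiors and to cover $\R^{m_0}$ up to a null set — i.e.\ that the $H_k$ form (the maximal cones of) a fan. This is essentially a local version of the basis decomposition / cone-triangulation statement near the degenerate vertex $x^\star(b)$, and its proof requires tracking how, for a small perturbation direction $g$, exactly one dual feasible basis among $I_1,\dots,I_K$ can remain primal feasible unless $g$ lies on a shared boundary hyperplane; the nondegeneracy assumption \eqref{ass:dualnondegen} is exactly what rules out a perturbation direction being simultaneously compatible with two distinct bases on a full-dimensional set. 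I would handle this by examining the sign conditions $(A_{I_k})^{-1}(b+\tfrac{1}{r_n}g)\geq 0$ coordinatewise and arguing that two distinct such systems can have overlapping solution sets of full dimension only if the corresponding dual solutions coincide, contradicting \eqref{ass:dualnondegen}; the details of this linear-algebra bookkeeping are where the care lies, but they should already be available from the proof of Theorem \ref{thm:limitGeneral} in Section \ref{sec:Proofs}.
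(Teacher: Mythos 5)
Your proposal is correct in its conclusions and captures the essential geometric picture, but it takes a genuinely different route from the paper's proof, and one point needs tightening. The paper does not prove Theorem \ref{thm:limit} as a corollary of Theorem \ref{thm:limitGeneral} by showing the $|\K|\ge 2$ summands have vanishing probability; instead it returns to the function $T$ from the proof of Theorem \ref{thm:limitGeneral} and shows that, under \eqref{ass:dualnondegen}, $T$ does not depend on $\alpha$ at all. The key observation is pointwise: $H_k$ is identified as the set of directions $v$ for which $(\text{P}_{b+\eta v})$ has $I_k$ as an optimal basis for some $\eta>0$; by Lemma \ref{lem:dualnondegenerate}, $\lambda(I_k)$ is nondegenerate and optimal for $(\text{D}_{b+\eta v})$, so the primal optimal solution of $(\text{P}_{b+\eta v})$ is unique (Proposition \ref{prop:slackness}), hence $x(I_j,b+\eta v)=x(I_k,b+\eta v)$ and by linearity $x(I_j,v)=x(I_k,v)$ for $v\in H_j\cap H_k$. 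The summands therefore \emph{agree} on the overlap set regardless of its measure, so $T$ collapses to a deterministic Lipschitz function of $v$ alone to which the continuous mapping theorem for $G_n\to G$ applies directly; no absolute continuity and no joint convergence of $(\alpha_n^\K,G_n)$ is needed. This is exactly the content of the remark following the theorem — the paper's proof remains valid for discrete (e.g.\ Poisson) limits $G$, which your measure-theoretic argument cannot deliver since it relies on $H_j\cap H_k$ having $G$-probability zero.

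Your approach is nonetheless sound for the theorem as stated, since \eqref{ass:cltforb} posits absolute continuity of $G^{m_0}$. Two things to tidy. First, the step you flag as the main obstacle — that the cones have pairwise-disjoint interiors so the intersections are Lebesgue-null — can be argued as you sketch, but the paper's route is more economical: on $H_j\cap H_k$ one computes directly that $(\lambda(I_k)-\lambda(I_j))^Tv=0$, so $v$ lies in a fixed proper hyperplane (nonempty because $\lambda(I_j)\ne\lambda(I_k)$ by \eqref{ass:dualnondegen}), hence the intersection is null; no fan structure needs to be established. Second, your explanation of why the joint-convergence hypothesis can be dropped is not quite right as written: at finite $n$, on $A_n^\K$ with $|\K|\ge 2$ the empirical solution is \emph{not} forced to equal a single basic solution, so the assertion that $\alpha_n^{\{k\}}\equiv1$ ``on each event'' conflates the finite-$n$ events with their limits. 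The clean way to rescue your argument is to observe that $\PR(A_n^\K)\to\PR(G\in H_\K\setminus\cup_{k\notin\K}H_k)=0$ for $|\K|\ge2$, so those summands in \eqref{eq:proofsumm} vanish in probability (using boundedness of $\alpha_n^\K$ and tightness of $x(I_\K,G_n)$), while for singletons $\alpha_n^{\{k\}}\equiv1$ is deterministic and only marginal convergence of $G_n$ is required. Either fix works, but the paper's uniform argument via $T$ being independent of $\alpha$ is cleaner and strictly stronger.
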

\begin{rem}
In Theorem \ref{thm:limit}, absolute continuity of the limiting random variable $G$ (or $G^{m_0}$) is not required. Indeed, $G$ can be an arbitrary random vector, and Theorem \ref{thm:limit} thus accommodates, e.g., Poisson limit distributions. 
If $G$ is absolutely continuous then the indicator functions simplify to $\mathbbm{1}_{G\in H_k}$ instead of $\mathbbm{1}_{G\in H_k\setminus \cup_{j<k}H_j}$, as intersections $H_k\cap H_j$ have Lebesgue measure zero (see Section \ref{sec:Proofs}).
\end{rem}

\subsection{Proximity of the Empirical Optimality Sets}
When multiple primal optimal solutions exist, we can still obtain proximity of the empirical optimality set $Opt(b_n)$ to $Opt(b)$ in Hausdorff distance
\begin{equation*}
d_H\left(Opt(b_n),Opt(b)\right)\coloneqq \max \left\lbrace\sup_{x\in OPT(b_n)}\inf_{y\in OPT(b)}\Vert x-y\Vert, \sup_{x\in OPT(b)}\inf_{y\in OPT(b_n)}\Vert x-y\Vert\right\rbrace.
\end{equation*}
The next theorem proves that the Hausdorff distance converges to zero.  The rate of convergence is precisely the same as that of $b_n$, namely $r_n$.

\begin{thm}\label{thm:Hausdorff}
Suppose assumptions \eqref{ass:optBounded} and \eqref{ass:feasforn} hold. Further, let $b_n$ be a (weakly) consistent estimator for $b$ such that $\Vert b_n-b\Vert= O_\PR(r_n^{-1})$. Then the Hausdorff distance between $OPT(b_n)$ and $OPT(b)$ is bounded in probability and in particular
\begin{equation*}
d_H\left(Opt(b_n),Opt(b)\right)=O_\PR (r_n^{-1}).
\end{equation*}
\end{thm}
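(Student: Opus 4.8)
The plan is to present both optimality sets as polyhedra sharing one and the same coefficient matrix and differing only in their right-hand side, and then to invoke Hoffman's error bound, whose constant depends only on that matrix. Write $c(\tilde b)$ for the optimal value of $(\mathrm{P}_{\tilde b})$ (finite whenever $OPT(\tilde b)\neq\emptyset$). Since $Ax=\tilde b$ and $x\ge 0$ already force $c^Tx\ge c(\tilde b)$, the optimality set is cut out by the single additional \emph{inequality} $c^Tx\le c(\tilde b)$, so that
\[
OPT(\tilde b)=\bigl\{x\in\R^d:\ Ax\le\tilde b,\ -Ax\le-\tilde b,\ c^Tx\le c(\tilde b),\ -x\le 0\bigr\}=:\{x: Mx\le q(\tilde b)\},
\]
with $M$ a fixed matrix depending only on $A,c$ and $q(\tilde b)=(\tilde b,-\tilde b,c(\tilde b),0_d)$. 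Hoffman's lemma then furnishes a constant $\theta=\theta(M)<\infty$, independent of the right-hand side, with $d\bigl(z,\{x:Mx\le q\}\bigr)\le\theta\,\|(Mz-q)_+\|$ for every consistent $q$ and every $z\in\R^d$.

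First I would establish Lipschitz continuity of the optimal value. Dual feasibility of a basis $I$, namely $\lambda(I)^TA\le c^T$, does not involve the right-hand side, so the dual basic feasible solutions $\lambda(I_1),\dots,\lambda(I_N)$ are the same for $b$ and for $b_n$; put $L:=\max_{j\le N}\|\lambda(I_j)\|$. On the event $E_n:=\{x^\star(b_n)\text{ exists}\}$, Theorems \ref{thm:strongduality} and \ref{thm:optFeas} supply dual optimal basic solutions $\lambda^\star(b)$ and $\lambda^\star(b_n)$, both lying in $\{\lambda(I_1),\dots,\lambda(I_N)\}$, with $c(b)=b^T\lambda^\star(b)$ and $c(b_n)=b_n^T\lambda^\star(b_n)$; since each is dual feasible for \emph{both} right-hand sides, $c(b)\ge b^T\lambda^\star(b_n)$ and $c(b_n)\ge b_n^T\lambda^\star(b)$, and subtracting the obvious pairs of inequalities yields $|c(b)-c(b_n)|\le L\,\|b_n-b\|$ on $E_n$.

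Next I would run Hoffman's bound in both directions. By \eqref{ass:feasforn}, $\PR(E_n)\to1$, and on $E_n$ the dichotomy of Lemma \ref{lem:rulesOnly} (available because of \eqref{ass:optBounded}) forces $OPT(b_n)$ to be nonempty and bounded, so both $\{x:Mx\le q(b)\}$ and $\{x:Mx\le q(b_n)\}$ are consistent. For any $z\in OPT(b)$ one has $z\ge0$, $Az=b$, $c^Tz=c(b)$, so the nonzero entries of $(Mz-q(b_n))_+$ are controlled by $\|b-b_n\|$ and $|c(b)-c(b_n)|$, giving $\|(Mz-q(b_n))_+\|\le(1+L)\|b_n-b\|$ uniformly in $z$; hence $\sup_{z\in OPT(b)}d(z,OPT(b_n))\le\theta(1+L)\|b_n-b\|$. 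The identical computation with $b$ and $b_n$ interchanged bounds $\sup_{z\in OPT(b_n)}d(z,OPT(b))$ by the same quantity. Therefore $d_H\bigl(OPT(b_n),OPT(b)\bigr)\le\theta(1+L)\|b_n-b\|$ on $E_n$; since $\PR(E_n)\to1$ and $\|b_n-b\|=O_\PR(r_n^{-1})$ by hypothesis, the right-hand side is $O_\PR(r_n^{-1})$, which is the claim.

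The main obstacle is purely one of uniformity: one must make sure the Lipschitz constant does not drift with $n$. This is handled by attaching Hoffman's constant to the fixed matrix $M$ rather than to the moving right-hand side, and by exploiting that dual feasibility is independent of the right-hand side, so that a single finite list of candidate dual vertices bounds $\lambda^\star(b_n)$ for all $n$ and keeps $|c(b)-c(b_n)|$ Lipschitz in $b_n$ with an $n$-free constant; the restriction to $E_n$ is needed only because Hoffman's inequality presupposes the target polyhedron to be nonempty. A more combinatorial alternative would write $OPT(b_n)=\Conv\{x(I_k,b_n):k\in\K_n\}$ for the random subset $\K_n$ of bases that remain primal feasible; the inclusion $OPT(b_n)\subseteq\{x:d(x,OPT(b))\le\eta_n\}$ with $\eta_n=\max_k\|A_{I_k}^{-1}\|\,\|b_n-b\|$ is immediate from linearity of $b\mapsto x(I,b)$ and convexity, but the reverse inclusion — placing a point of $OPT(b_n)$ near a degenerate vertex $x(I_k,b)$ that loses primal feasibility under perturbation — is exactly where that route becomes awkward and where the Hoffman argument is smoother.
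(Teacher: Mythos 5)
Your proof is correct, and it takes a genuinely different route from the paper. The paper's argument is combinatorial and bases-driven: it splits the two halves of the Hausdorff distance, handles $\sup_{x\in OPT(b_n)}\inf_{y\in OPT(b)}$ by the containment $OPT(b_n)\subseteq \Conv\{x(I_{[K]},b_n)\}$ (Lemma \ref{lem:feas}), and then confronts the harder reverse direction by introducing equivalence classes of optimal bases and proving an auxiliary statement (Lemma \ref{lem:basisequivalenceclasses}) that at least one representative from each class stays primal feasible under small perturbations. You sidestep all of that by expressing $OPT(\tilde b)$ as a polyhedron $\{x:Mx\le q(\tilde b)\}$ with a fixed coefficient matrix and a moving right-hand side, establishing the Lipschitz bound $|c(b)-c(b_n)|\le L\|b_n-b\|$ via the $n$-independent list of dual basic feasible vectors, and then invoking Hoffman's error bound whose constant $\theta(M)$ depends only on $M=(A;-A;c^T;-I_d)$. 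The two directions of the Hausdorff distance then fall out symmetrically from the same inequality. What the paper's approach buys is that it stays within the bases-driven framework developed and reused throughout, and Lemma \ref{lem:basisequivalenceclasses} is itself informative about how the optimal face perturbs; what your approach buys is a shorter, cleaner argument that treats both directions uniformly and does not require disentangling degeneracy, at the cost of importing an external tool (Hoffman's lemma) rather than proving everything from the primal--dual basis structure. Both deliver exactly the claimed rate, and your final remark correctly identifies where the combinatorial route becomes delicate.
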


\subsection{The Empirical Optimal Objective Value}\label{subsec:Limitvalue}

Limit laws for the empirical optimal objective value in the special case of optimal transport (see Section \ref{sec:OT}) have recently been of particular interest in \cite{sommerfeld2018inference,tameling2019empirical}. Our theory extends their findings to more general standard linear programs. Furthermore, compared to their approach based on a functional delta method for Hadamard directional differentiable functionals, our proof is elementary and makes obvious the dependency on all dual optimal basic solutions for \eqref{eq:standardDP}. Recall that we denote by $c(b)$ the primal optimal value for the linear program \eqref{eq:standardLP}.

\begin{prop}\label{prop:cltoptimalvalue}
Suppose assumption \eqref{ass:optBounded}, \eqref{ass:cltforb} and \eqref{ass:feasforn} hold. Then, as $n$ tends to infinity
\begin{equation*}
r_n(c(b_n) - c(b))
\xrightarrow[]{D}
\max_{1\leq k \leq K}G^T\lambda(I_k),
\end{equation*}
where $r_n(b_n-b)\xrightarrow[]{D} G$.
\end{prop}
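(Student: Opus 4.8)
\textbf{Proof plan for Proposition~\ref{prop:cltoptimalvalue}.}
The plan is to reduce the fluctuation of the optimal value to a maximum over dual basic feasible solutions, exploiting strong duality together with local stability of the set of optimal bases under small perturbations of $b$. First I would write, using strong duality (Theorem~\ref{thm:strongduality}) applied to $(\text{D}_b)$, that $c(b)=\max_{1\le k\le N}b^T\lambda(I_k)$ whenever $(\text{P}_b)$ has an optimal solution, where $I_1,\dots,I_N$ enumerate all dual feasible bases; in fact the maximum is attained precisely on the subcollection $I_1,\dots,I_K$ that are also primal feasible for $(\text{P}_b)$, so $c(b)=\max_{1\le k\le K}b^T\lambda(I_k)$. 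The key observation is that the $\lambda(I_k)$ do not depend on $b$: they are fixed vectors determined by $(A,c)$. Thus $b\mapsto\max_{1\le k\le N}b^T\lambda(I_k)$ is a fixed convex, piecewise-linear function, and the only subtlety is identifying which of the $N$ dual feasible bases remain primal feasible (hence enter the maximum) for the perturbed right-hand side $b_n$.

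Next I would argue that, with probability tending to one, $c(b_n)=\max_{1\le k\le K}b_n^T\lambda(I_k)$, i.e. the same subcollection $I_1,\dots,I_K$ suffices. For this, on the event that $x^\star(b_n)$ exists (assumption~\eqref{ass:feasforn}), $c(b_n)=\max_{k\in S_n}b_n^T\lambda(I_k)$ where $S_n\subseteq[N]$ indexes the bases that are both dual feasible and primal feasible for $b_n$; dual feasibility is unchanged, so $S_n$ is exactly the set of $k$ with $x(I_k,b_n)\ge 0$. Clearly $\{1,\dots,K\}\subseteq S_n$ is not automatic, but I would instead show the two quantities $\max_{k\in S_n}b_n^T\lambda(I_k)$ and $\max_{1\le k\le K}b_n^T\lambda(I_k)$ agree with high probability. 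By weak consistency, $\|b_n-b\|\to 0$ in probability, so $b_n$ eventually lies in a small neighbourhood $U$ of $b$. On one hand, for $k\le K$ the primal basic solution $x(I_k,b)$ satisfies $c(b)=b^T\lambda(I_k)$ (strong duality, Theorem~\ref{thm:strongduality}(ii)), so $\max_{1\le k\le K}b_n^T\lambda(I_k)\ge b_n^T\lambda(I_{k_0})$ for the maximizing $k_0\le K$; and for any $k$, $b_n^T\lambda(I_k)=c(b)+(b_n-b)^T\lambda(I_k)$ up to the gap $c(b)-b^T\lambda(I_k)$. On the other hand, any $k\in S_n\setminus\{1,\dots,K\}$ has $b^T\lambda(I_k)<c(b)$ strictly (it is dual feasible but $x(I_k,b)$ is not primal feasible, so by strong duality $\lambda(I_k)$ is not dual optimal), whence $b_n^T\lambda(I_k)=b^T\lambda(I_k)+(b_n-b)^T\lambda(I_k)$ is, for $b_n\in U$, bounded strictly below $c(b)$ minus a fixed positive gap $\delta>0$, while $\max_{1\le k\le K}b_n^T\lambda(I_k)\ge c(b)-\|b_n-b\|\max_k\|\lambda(I_k)\|>c(b)-\delta$ on a smaller neighbourhood. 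Hence on an event of probability $\to 1$, $c(b_n)=\max_{1\le k\le K}b_n^T\lambda(I_k)$.

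Finally I would conclude by the continuous mapping / delta-method argument. On this event,
\begin{align*}
r_n\big(c(b_n)-c(b)\big)
&=r_n\Big(\max_{1\le k\le K}b_n^T\lambda(I_k)-\max_{1\le k\le K}b^T\lambda(I_k)\Big)\\
&=r_n\Big(\max_{1\le k\le K}\big(b^T\lambda(I_k)+(b_n-b)^T\lambda(I_k)\big)-c(b)\Big)\\
&=\max_{1\le k\le K}\Big(r_n\big(b^T\lambda(I_k)-c(b)\big)+G_n^T\lambda(I_k)\Big)
=\max_{1\le k\le K}G_n^T\lambda(I_k),
\end{align*}
where $G_n=r_n(b_n-b)$ and the last equality uses $b^T\lambda(I_k)=c(b)$ for every $k\le K$ (all of $I_1,\dots,I_K$ are dual optimal). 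Since $\phi(v)\coloneqq\max_{1\le k\le K}v^T\lambda(I_k)$ is continuous on $\R^m$ and $G_n\xrightarrow{D}G$ by~\eqref{ass:cltforb}, the continuous mapping theorem gives $r_n(c(b_n)-c(b))\xrightarrow{D}\max_{1\le k\le K}G^T\lambda(I_k)$; the probability-one-in-the-limit event can be absorbed via Slutsky. The main obstacle is the second step: showing that the spurious bases $k\in S_n\setminus\{1,\dots,K\}$ never affect the maximum, which rests on the strict duality gap $c(b)-b^T\lambda(I_k)>0$ for dual feasible but non-optimal $\lambda(I_k)$ and on the uniform Lipschitz control of $v\mapsto v^T\lambda(I_k)$; everything else is bookkeeping with strong duality and the continuous mapping theorem. (Note absolute continuity of $G^{m_0}$ plays no role here, consistent with the fact that this limit law, unlike those for the optimal solution, is insensitive to degeneracy beyond the list $\lambda(I_1),\dots,\lambda(I_K)$.)
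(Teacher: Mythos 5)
Your overall strategy mirrors the paper's: reduce $c(b_n)$ to $\max_{1\le k\le K}b_n^T\lambda(I_k)$ on an event of probability tending to one, use $b^T\lambda(I_k)=c(b)$ for all $k\le K$, and conclude via the continuous mapping theorem and Slutsky. The final algebraic step is identical to the paper's. However, the way you eliminate the spurious bases $k>K$ contains a genuine gap.

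You assert that every $k>K$ has a strict duality gap $b^T\lambda(I_k)<c(b)$, justifying this by ``$\lambda(I_k)$ is dual feasible but $x(I_k,b)$ is not primal feasible, so by strong duality $\lambda(I_k)$ is not dual optimal.'' This misreads Theorem \ref{thm:strongduality}(ii): that result states that joint primal and dual basic feasibility of $I$ implies optimality of both solutions; it does not say (and it is false) that primal basic infeasibility of $x(I,b)$ implies $\lambda(I)$ is not dual optimal. A basis $I_k$ with $k>K$, so $x(I_k,b)\not\ge 0$, can perfectly well satisfy $b^T\lambda(I_k)=c(b)$; this occurs whenever the dual optimum is degenerate. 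Concretely, take $A=\begin{pmatrix}1&0&1&-1\\0&1&1&1\end{pmatrix}$, $b=(1,1)^T$, $c=0\in\R^4$; then \eqref{ass:optBounded} holds, $c(b)=0$, and the basis $I=\{2,4\}$ has $\lambda(I)=(0,0)$ attaining the optimal dual value $0$, while $x(I,b)=(0,2,0,-1)$ is infeasible, so $I\notin\{I_1,\dots,I_K\}$. With no uniform gap $\delta>0$, your bound $b_n^T\lambda(I_k)<c(b)-\delta+O(\|b_n-b\|)$ fails for such $k$, and you have not established $\max_{k\in S_n}b_n^T\lambda(I_k)=\max_{1\le k\le K}b_n^T\lambda(I_k)$.

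The paper's proof sidesteps the comparison with $k>K$ entirely and works from the other direction. Lemma \ref{lem:feas} shows $\PR(x(I_k,b_n)\ge0)\to0$ for each $k>K$, because some coordinate $x_i(I_k,b)<0$ and $x_i(I_k,b_n)\to x_i(I_k,b)$ in probability. Combined with \eqref{ass:feasforn}, this gives $\PR(F_n)\to1$ for $F_n=\cup_{k\le K}B_n^k$. On $F_n$, some $k_0\le K$ has $x(I_{k_0},b_n)\ge0$, so Theorem \ref{thm:strongduality}(ii) yields $c(b_n)=b_n^T\lambda(I_{k_0})$; and since every $\lambda(I_l)$ with $l\le K$ is dual feasible for $(\text{D}_{b_n})$, $b_n^T\lambda(I_l)\le c(b_n)$, giving $c(b_n)=\max_{l\le K}b_n^T\lambda(I_l)$. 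Replacing your second step with this argument repairs the proof; your first and third steps are fine as written.
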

Proposition \ref{prop:cltoptimalvalue} shows that the limit law for the empirical optimal objective value only depends on the maximum of all dual optimal basic solutions for \eqref{eq:standardDP}. However, the convex hull spanned by these solutions defines the dual optimality set and since we are minimizing a (random) linear function we can rewrite the limit law in terms of all dual optimal solutions
\begin{equation*}
\max_{1\leq k \leq K}G^T\lambda(I_k) = \max_{\substack{\lambda \text{ optimal solution} \\ \text{ for }\eqref{eq:standardDP}}}G^T\lambda.
\end{equation*}

\subsection{Properties of the Support}
For the primal linear program \eqref{eq:standardLP} and some primal feasible bases $I\subseteq [d]$ denote by $x=x(I,b)$ the induced primal basic feasible solution. Any such solution can be considered to be relatively sparse as there are at most $m\leq d$ nonnegative entries (usually $m<<d$). Sparsity is even more prominent if $x$ turns out to be degenerate. Further, the coordinates of the corresponding primal basic feasible solution $x$ can be partitioned as follows. Define 
\begin{equation*}
\Pos(x)
\coloneqq \{i\, \mid \,x_i>0\}
\subseteq[d]=\{1,\dots,d\}
\end{equation*}
as the nonzero entries of $x$. The true zeroes are the entries that vanish for \emph{any} dual-feasible $I$ that induces $x$, i.e., 
\begin{equation*}
\TZ(x) \coloneqq [d] \setminus 
\left(\bigcup_{I\, \mid \,A^T\lambda(I)\leq c;\,x=x(I,b)}I\right).
\end{equation*}
The degenerate zeroes are the entries that vanish for \emph{some} (but not all) $I$ that induces $x$, i.e.,
\begin{equation*}
\DZ(x)\coloneqq \left(\bigcup_{I\, \mid \,A^T\lambda(I)\le c;\,x=x(I,b)}I\right)
\setminus \Pos(x).
\end{equation*}
We clearly have that $\DZ(x)$, $\TZ(x)$ and $\Pos(x)$ form a partition of $[d]$. With this notation we can prove that asymptotically an empirical optimal basic solution $x^\star(b_n)$ includes the set $\Pos(x)$ and $\TZ(x)$ with high probability.

\begin{thm}\label{thm:support}
Suppose assumptions \eqref{ass:uniquexb}, \eqref{ass:cltforb}, and \eqref{ass:feasforn} hold. Then we find that
\begin{equation*} 
\lim_{n\to \infty}\PR\left(\bigcap_{i\in \TZ(x)} x_i^{\star}(b_n)=0\right) = 1, \quad \lim_{n\to\infty}\PR\left(\bigcap_{i\in \Pos(x)} x_i^{\star}(b_n)>0\right) = 1. 
\end{equation*}
Assume further that $m_0=m$ and the density of $G$ is positive in a neighborhood of the origin, and that for each $\K$ the limiting distribution of $\alpha^\K$ is not concentrated on a lower-dimensional simplex (this is trivially true if \eqref{ass:dualnondegen} holds). Then for any degenerate zero $i\in \DZ(x)$ it holds that 
\begin{equation*}
\PR\left( [M(G)]_i>0\right)>0.
\end{equation*}
\end{thm}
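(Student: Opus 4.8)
\textbf{Proof plan for Theorem \ref{thm:support}.}

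The plan is to treat the three assertions separately, building each on the combinatorial picture developed in the earlier sections. For the first two claims, I would argue via the Hausdorff-type stability of the optimality set together with the uniqueness assumption \eqref{ass:uniquexb}. Concretely, fix $i\in\TZ(x)$. By definition, $x_i=0$ for every dual-feasible basis $I$ inducing $x=x^\star(b)$; equivalently, $i\notin I$ for all such bases. Whenever $x^\star(b_n)$ exists, Lemma \ref{lem:rulesOnly} writes it as a convex combination of basic solutions $x(I_k,b_n)$ over bases $I_k$ that are primal and dual feasible for the perturbed problem. The key step is to show that, with probability tending to one, every basis actually used in this representation is one of the bases $I_1,\dots,I_K$ inducing $x^\star(b)$ in the unperturbed problem: this follows because dual feasibility is an \emph{open} condition perturbed only through $c$ (which is fixed, so the dual-feasible bases for $(\mathrm{D}_{b_n})$ are exactly those for $(\mathrm{D}_b)$), while primal feasibility $x(I,b_n)\ge0$ of a basis $I$ whose unperturbed solution $x(I,b)$ has a strictly negative coordinate fails for $n$ large by consistency of $b_n$. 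Hence asymptotically only bases with $x(I,b)=x^\star(b)$ survive, for each such basis $i\notin I$ forces $x_i(I_k,b_n)=0$, and the convex combination keeps that coordinate at zero. The second claim, that $x_i^\star(b_n)>0$ for $i\in\Pos(x)$, follows from the same reduction plus continuity: $x_i(I_k,b_n)\to x_i(I_k,b)=x_i^\star(b)>0$ for every surviving basis $I_k$, hence the convex combination is bounded below by $\min_k x_i(I_k,b_n)/2>0$ with high probability. (A small point to check: there are finitely many bases, so all these ``high probability'' statements can be intersected.)

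For the third, quantitative claim, I would work directly with the limit random variable $M(G)$ produced by Theorem \ref{thm:limitGeneral}. Fix a degenerate zero $i\in\DZ(x)$. By definition there is a dual-feasible basis $I_{k_0}$ inducing $x^\star(b)$ with $i\in I_{k_0}$, so that the $i$-th coordinate of the linear map $g\mapsto x(I_{k_0},g)=\Aug_{I_{k_0}}[(A_{I_{k_0}})^{-1}g]$ is a nonzero linear functional $\ell(g):=[x(I_{k_0},g)]_i$ of $g\in\R^m$. The plan is to exhibit a region of positive $G$-probability on which $M(G)$ assigns a positive value to coordinate $i$. The natural candidate is a small open cone inside $H_{\{k_0\}}\setminus\bigcup_{k\ne k_0}H_k$ (the region on which the ``pure'' basis $I_{k_0}$ is selected, $\K=\{k_0\}$) intersected with the half-space $\{\ell>0\}$. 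On that region the indicator picks out the single term $\alpha^{\{k_0\}}\otimes x(I_{k_0},G)$ with $\alpha^{\{k_0\}}\equiv1$, and its $i$-th coordinate is exactly $\ell(G)>0$.

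The main obstacle — and the step I would spend the most care on — is verifying that this region has positive $G$-probability, i.e. that $H_{\{k_0\}}\setminus\bigcup_{k\ne k_0}H_k$ has nonempty interior and that this interior meets $\{\ell>0\}$. Nonemptiness of the interior of the selection region for a single pure basis should follow from the construction of the cones $H_k$ in the proof of Theorem \ref{thm:limitGeneral}: each $H_k=\{g : (A_{I_k})^{-1}g \ge 0 \text{ on the degenerate coordinates}\}$ is full-dimensional (it is defined by $m-|\Pos(x^\star(b))|$ half-spaces through the origin, and the relevant rows of $(A_{I_k})^{-1}$ are linearly independent since $A_{I_k}$ is invertible), and the $H_k$ are genuinely distinct cones because the bases $I_k$ are distinct; removing the finitely many lower-dimensional ``boundary'' pieces $H_k\cap H_j$ still leaves a nonempty open set. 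To intersect this open cone with $\{\ell>0\}$ I would argue that $\ell$ cannot vanish identically on it: $\ell$ is a nonzero linear functional (since $i\in I_{k_0}$ and $(A_{I_{k_0}})^{-1}$ is invertible), hence $\{\ell>0\}$ is an open half-space, and an open half-space through the origin always meets the interior of any full-dimensional cone through the origin (otherwise that cone would lie in the closed half-space $\{\ell\le0\}$, contradicting full-dimensionality combined with the fact that $H_{\{k_0\}}$, being a cone, contains points on both sides of any hyperplane through $0$ unless it is degenerate — here one uses that $H_{\{k_0\}}$ has nonempty interior). Finally, since $m_0=m$ and the density of $G$ is positive near the origin, and since a cone is invariant under positive scaling so that intersecting with a small ball around $0$ does not kill the open set, this region carries positive $G$-mass. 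The hypothesis that the law of $\alpha^\K$ is not concentrated on a lower-dimensional simplex is what guarantees $\alpha^{\{k_0\}}=1$ has positive probability in the general (non-\eqref{ass:dualnondegen}) case and must be invoked at the point where we fix $\K=\{k_0\}$; under \eqref{ass:dualnondegen} the subsets $\K$ are automatically singletons and this is vacuous. Continuity/measurability of $M(G)$ as a function of $G$ on the open region, needed to transfer positivity of a set to positivity of a probability, is immediate since there $M$ coincides with the continuous linear map $x(I_{k_0},\cdot)$.
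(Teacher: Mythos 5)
Your handling of the first two assertions matches the paper's approach: reduce to the event that only the true optimal bases $I_1,\dots,I_K$ survive (the paper calls this event $GoodOpt$), then use the convex-combination representation from Lemma~\ref{lem:rulesOnly} and continuity of $b\mapsto x(I_k,b)$. That part is fine.

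The third claim is where your plan has a genuine gap. You want to localize on the pure-basis region $H_{\{k_0\}}\setminus\bigcup_{k\ne k_0}H_k$ so that the limit reduces to $x(I_{k_0},\cdot)$, and you argue that since the cones $H_k$ are "genuinely distinct" the intersections $H_j\cap H_k$ are lower-dimensional, so that this set retains a nonempty interior. That is false in exactly the regime the theorem cares about. When \eqref{ass:dualnondegen} fails (i.e.\ some dual optimal basic solutions are degenerate), the pairwise intersections $H_j\cap H_k$ need \emph{not} be lower-dimensional — they can be full-dimensional and in fact one cone can contain another. The paper's own Example \ref{exm:OT} exhibits this: for $p=1$ and $r=s$ one has $H_3 \subseteq H_7$, so $H_3\setminus\bigcup_{k\ne 3}H_k=\emptyset$ and your target region has $G$-measure zero. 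This is precisely why Theorem~\ref{thm:limitGeneral} needs the subsets $\K$ in the first place; under \eqref{ass:dualnondegen} your pure-basis region would be of positive measure and your argument would go through, but the interesting part of the third claim is when \eqref{ass:dualnondegen} fails.

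Relatedly, you invoke the hypothesis that the law of $\alpha^\K$ is not concentrated on a lower-dimensional face "at the point where we fix $\K=\{k_0\}$", but for a singleton $\K$ the weight vector $\alpha^{\{k_0\}}$ is deterministically equal to $1$, so the hypothesis says nothing there. Its role is the opposite of what you assign it: since you may be forced to work with a non-singleton $\K\supseteq\{k_0\}$ (because the pure-basis region is too small), you then need $(\alpha^\K)_{k_0}>0$ with positive probability, and \emph{that} is what the hypothesis delivers.

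The paper's fix is exactly this. It shows $H_{k_0}$ has nonempty interior by noting it contains a neighborhood of $A_{I_{k_0}}\mathbf 1_m$ (the preimage under $A_{I_{k_0}}$ of a strictly positive vector), so $\PR(G\in H_{k_0})>0$ under the density assumption; since the sets $H_\K\setminus\bigcup_{k\notin\K}H_k$ over all $\K\ni k_0$ cover $H_{k_0}$ up to a null set, some $\K\supseteq\{k_0\}$ has $\PR(G\in H_\K\setminus\bigcup_{k\notin\K}H_k)>0$; and then the $\alpha^\K$ hypothesis guarantees positive probability of $(\alpha^\K)_{k_0}>0$. Combined with absolute continuity of $x_i(I_{k_0},G)$ this yields the desired conclusion. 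If you replace your restriction to the singleton $\K=\{k_0\}$ by this "any $\K$ containing $k_0$" step, and move the invocation of the $\alpha^\K$ hypothesis to there, your argument becomes correct.

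One further small point: your general claim "an open half-space through the origin always meets the interior of any full-dimensional cone through the origin" is false as stated (take $H=\{x\ge0\}\subset\R^2$ and $\ell(x,y)=-x$). In the specific instance it is harmless because $i$ is a degenerate zero, so the linear form $\ell(g)=[(A_{I_{k_0}})^{-1}g]_{j_0}$ is one of the defining inequalities of $H_{k_0}$ and the interior of $H_{k_0}$ is already contained in $\{\ell>0\}$; but you should state it that way rather than rely on the incorrect general principle.
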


\section{Proofs for the Main Results}\label{sec:Proofs}

In this section, we prove our main theorems. The approach is based on a careful decomposition of the ground probability space $\Omega$ into events (subsets of the underlying probability space\footnote{To simplify the notation, we assume that all random vectors in the paper are defined on a common probability space $(\Omega,\mathcal F,\PR)$. This is no loss of generality by Skrokhod representation theorem.}) depending on the random fluctuation $b_n-b$. 

\subsection{Preliminary Steps}

This subsection introduces the aforementioned events. In particular, the main step here is to rewrite them in a convenient way to conclude about their $G_n^{m_0}$ probability content as $n$ tends to infinity. At this stage we suppose that assumptions \eqref{ass:uniquexb}, \eqref{ass:cltforb}, and \eqref{ass:feasforn} hold. 

\subsubsection{Indexing Dual Solutions by Bases}
According to assumption \eqref{ass:cltforb}, the estimator $b_n$ is (weakly) consistent, i.e., for any $\epsilon>0$ the probability $\PR(\Vert b_n-b\Vert>\epsilon)$ converges to zero as $n$ tends to infinity. Consider the empirical counterpart (\hyperref[eq:standardDP]{$\text{D}_{b_n}$}) and its corresponding empirical primal linear program (\hyperref[eq:standardLP]{$\text{P}_{b_n}$}). Notice that the feasible dual bases for (\hyperref[eq:standardDP]{$\text{D}_{b_n}$}) are precisely the same as for \eqref{eq:standardDP} since they do not depend on $b$. Let $I_1,\dots,I_N$ be all dual feasible bases, where without loss of generality the first $K\leq N$ are such that they also induce optimal primal solutions for \eqref{eq:standardLP}, i.e.,
\begin{equation*}
x(I_k,b)\in OPT(b) \Leftrightarrow k\leq K.
\end{equation*}
In general, the primal basic solution $x(I_k,b_n)$ for $k\leq K$ may fail to be feasible for (\hyperref[eq:standardLP]{$\text{P}_{b_n}$}) as we might encounter negative entries; this is true even if $b_n$ is close to $b$. Define for any subset $\K\subseteq[N]$ the event 
\begin{equation*}
A_n^{\K} \coloneqq\left\lbrace\omega\in\Omega \,\vert\, x(I_k,b_n(\omega))\geq 0 \Leftrightarrow k\in \K\right\rbrace \subseteq\Omega
\end{equation*}
that the dual feasible bases indexed by $\K$ are precisely those that induce a primal optimal basic solution for (\hyperref[eq:standardLP]{$\text{P}_{b_n}$}). Since $\lambda(I_k)$ for $k\in \K$ is a dual basic feasible solution for (\hyperref[eq:standardDP]{$\text{D}_{b_n}$}) we deduce by strong duality (see Theorem \ref{thm:strongduality}) that the set $A_n^{\K}$ is the event that the dual feasible bases indexed by $\K$ are precisely those that induce a primal optimal basic solution for (\hyperref[eq:standardLP]{$\text{P}_{b_n}$}). If $\K=\emptyset$ this is the event that the primal is infeasible and consequently the optimality set $P(b_n)$ is empty. For notational simplicity we set $x^{\star}(b_n)=(\infty,\dots,\infty)\in(\R\cup\{\infty\})^d$ when $A_n^\emptyset$ occurs. However, this event has vanishing probability since
\begin{equation*}
\lim_{n\to\infty}\PR\left(A_n^\emptyset\right)=\lim_{n\to\infty}\PR\left(x^\star(b_n)\textit{ does not exist}\right) = 0,
\end{equation*}
by assumption \eqref{ass:feasforn}. The $A_n^\K$ are disjoint by definition and in view of Theorem \ref{lem:rulesOnly}, they form a partition of the underlying probability space
\begin{equation*}
\bigcup_{\K\subseteq[N]}A_n^{\K} =\Omega,
\quad A_n^{\K} \cap A_n^{\K'}=\emptyset,\quad \K\ne \K'.
\end{equation*}
According to the law of total probability
\begin{equation}\label{eq:limitrewritten}
r_n\left(x^{\star}(b_n(\omega)) - x^{\star}(b)\right)
=\sum_{\emptyset\subset\K\subseteq[N]} \mathbbm{1}_{A_n^\K}(\omega) \,r_n\left(x^{\star}(b_n(\omega)) - x^{\star}(b)\right)
+o_\PR (1),
\end{equation}
where $\mathbbm{1}_{A}(\omega)$ denotes the usual indicator function of the set $A$.

\subsubsection{Neglecting the Infeasible Bases}
As a next step, we can simplify the sum of the right-hand side in \eqref{eq:limitrewritten} as some subsets $\K\subseteq [N]$ have asymptotically probability zero. We define the events
\begin{equation*}
B_n^k\coloneqq \left\lbrace\omega\in\Omega\, \vert \,x(I_k,b_n(\omega))\geq 0\right\rbrace \subseteq \Omega
\end{equation*}
that dual feasible basis $I_k$ also induces a primal feasible basic solution for (\hyperref[eq:standardLP]{$\text{P}_{b_n}$}). By strong duality (see Theorem \ref{thm:strongduality}) the set $B_n^k$ entails that $x(I_k,b_n)$ is a primal optimal basic solution for (\hyperref[eq:standardLP]{$\text{P}_{b_n}$}), and moreover
\begin{equation}\label{eq:Bnkdual}
B_n^k
\subseteq
\left\{b_n^T\lambda(I_k)
=\max_{\lambda:A^T\lambda\le c} b_n^T\lambda\right\} \subseteq \Omega.
\end{equation}
The right-hand side is the event that $\lambda(I_k)$ is optimal for the dual problem (\hyperref[eq:standardDP]{$\text{D}_{b_n}$}). This does not necessarily imply that $I_k$ induces a primal optimal basic solution for (\hyperref[eq:standardLP]{$\text{P}_{b_n}$}) as some coordinates of $x(I_k,b_n)$ might be negative. Hence, the inclusion in the above display can be strict. We conclude the following probabilistic statement for $n$ tending to infinity. Recall that $I_1,\dots,I_K$ enumerate all bases that induce primal and dual feasible basic solutions (see Remark \ref{rem:basessplitting}), where $1\le K\le N$.
\begin{lem}\label{lem:feas}
For any index $k>K$ it holds that
\begin{equation*}
\lim_{n\to \infty}\PR \left(B_n^k\right)= 0.
\end{equation*}
\end{lem}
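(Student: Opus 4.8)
\textbf{Proof plan for Lemma \ref{lem:feas}.}

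The plan is to show that the event $B_n^k$ forces $b_n$ to lie in a region that shrinks to a Lebesgue-null set (or at least a set avoided by the limit $G$) as $n\to\infty$. The key structural fact is that for $k>K$, the basis $I_k$ is dual feasible but $x(I_k,b)$ is \emph{not} primal feasible, i.e.\ $x(I_k,b)\not\ge 0$. Since $x(I,\cdot)=\Aug_I\circ(A_I)^{-1}$ is a fixed linear map, there is a coordinate $i\in I_k$ with $[x(I_k,b)]_i = e_i^T (A_{I_k})^{-1} b < 0$; call this value $-\delta_k<0$ and set $\delta=\min_{k>K}(\delta_k/2)>0$ (finitely many $k$). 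By linearity and continuity of $v\mapsto e_i^T(A_{I_k})^{-1}v$, there is $\epsilon>0$ such that $\Vert v-b\Vert\le\epsilon$ implies $[x(I_k,v)]_i\le -\delta<0$, hence $x(I_k,v)\not\ge 0$; equivalently $B_n^k\subseteq\{\Vert b_n-b\Vert>\epsilon\}$.

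From here the conclusion is immediate from consistency: assumption \eqref{ass:cltforb} gives $b_n\xrightarrow{\PR}b$ (as already noted in the text), so $\PR(B_n^k)\le\PR(\Vert b_n-b\Vert>\epsilon)\to 0$. First I would fix $k>K$ and extract the bad coordinate $i\in I_k$; then bound $[x(I_k,b_n)]_i$ above by $-\delta$ on the event $\{\Vert b_n-b\Vert\le\epsilon\}$ using the operator norm of $(A_{I_k})^{-1}$ restricted to row $i$; then take complements and apply weak consistency.

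The argument is essentially routine, so there is no serious obstacle; the only point requiring a little care is the uniform choice of $\epsilon$ and $\delta$ over the finitely many indices $k\in\{K+1,\dots,N\}$, which is harmless since the collection of bases is finite. One subtlety worth flagging is that $B_n^k$ is defined only via nonnegativity of $x(I_k,b_n)$, and its relation to dual optimality for $(\text{D}_{b_n})$ established in \eqref{eq:Bnkdual} is not actually needed for this lemma — the estimate is purely about the primal feasibility of the fixed basic solution map evaluated near $b$. If one instead wanted a quantitative rate, $\PR(B_n^k)\le\PR(\Vert b_n-b\Vert>\epsilon)$ can be combined with $\Vert b_n-b\Vert=O_\PR(r_n^{-1})$ to get an $O(\PR(\Vert G^{m_0}\Vert \text{ large}))$-type tail bound, but for the stated qualitative claim weak consistency suffices.
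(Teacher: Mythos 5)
Your proof is correct and takes essentially the same route as the paper: fix $k>K$, isolate a coordinate $i\in I_k$ with $x_i(I_k,b)<0$, and conclude $\PR(B_n^k)\to 0$ from weak consistency of $b_n$ via continuity of the linear map $v\mapsto x_i(I_k,v)$. The explicit $\epsilon$–$\delta$ bookkeeping and the remark that \eqref{eq:Bnkdual} is not needed are accurate but add nothing beyond the paper's argument.
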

\begin{proof}
Let $k>K$. Then $I_k$ that yields a dual basic feasible solution for \eqref{eq:standardDP}, has the property that for at least one coordinate $i\in I_k$ it holds that $x_i(I_k,b)<0$ (else $x(I_k,b)\geq 0$ and hence $k\leq K$). For this particular index $i$ we find
\begin{equation*}
\begin{split}
\PR\left(B_n^k\right)&=\PR\left( \{\omega\in\Omega\, \vert \,x(I_k,b_n(\omega))\geq 0\}\right)\\
&\leq\PR(x_i(I_k,b_n)\geq 0)=\PR\left(x_i(I_k,b)-x_i(I_k,b_n)\leq x_i(I_k,b)\right).
\end{split}
\end{equation*}
As $b_n$ converges in probability to $b$, the real value $x_i(I_k,b_n)$ converges in probability to $x_i(I_k,b)$ by the continuous mapping theorem. Hence, we conclude that the last event in the above display has probability converging to zero for $n$ to infinity as $x_i(I_k,b)<0$. This yields the claim.
\end{proof}
An immediate consequence of Lemma \ref{lem:feas} is that the sum in \eqref{eq:limitrewritten} can be rewritten as
\begin{equation*}
\begin{split}
&\sum_{\emptyset\subset\K\subseteq[N]} \mathbbm{1}_{A_n^\K}(\omega)\, r_n\left(x^{\star}(b_n(\omega)) - x^{\star}(b)\right) + o_\PR(1) \\
= &\sum_{\emptyset\subset\K\subseteq[K]} \mathbbm{1}_{A_n^\K}(\omega)\, r_n\left(x^{\star}(b_n(\omega)) - x^{\star}(b)\right) + o_\PR(1)\\
=&\sum_{\emptyset\subset\K\subseteq[K]} \mathbbm{1}_{A_n^\K}(\omega)\, r_n\left(\alpha_n^\K(\omega) \otimes x(I_\K,b_n(\omega)) - x^{\star}(b)\right) + o_\PR(1).
\end{split}
\end{equation*}
Moreover, by assumption \eqref{ass:uniquexb} and as $\K\subseteq[K]$ is non-empty, all the basic solutions $x(I_\K,b)$ induce the same primal optimal basic solution $x^{\star}(b)$, and then $\alpha_n^\K\otimes x(I_\K,b)=x^{\star}(b)$. Thus, the last sum equals
\begin{equation}\label{eq:proofsumm}
\sum_{\emptyset\subset\K\subseteq[K]} \mathbbm{1}_{A_n^\K}(\omega)\, \alpha_n^\K(\omega) \otimes x(I_k,G_n(\omega)) + o_\PR(1),
\end{equation}
where we recall that $G_n=r_n(b_n-b)$. 

\subsubsection{The Limiting Convex Cones}\label{subsubsec:convexcones}
We next investigate the indicator functions $\mathbbm{1}_{A_n^\K}(\omega)$ appearing in \eqref{eq:proofsumm}. The idea is to rewrite $A_n^\K$ such that the underlying process $G_n$ appears. Since for any basis $I_k$ it holds that $$x(I_k,b_n)-x(I_k,b)=x(I_k,b_n-b)$$ and $r_n\geq 0$ we rewrite the event $A_n^{\K} \coloneqq\left\lbrace x(I_k,b_n)\geq 0 \Leftrightarrow k\in \K\right\rbrace$ as 
\begin{equation*}
\begin{split}
A_n^\K&=\bigcap_{k\in \K}\bigcap_{i\in I_k}\left\{x_i(I_k,b_n)\geq 0\right\}
\cap\bigcap_{k\notin \K}\bigcup_{i\in I_k}\{x_i(I_k,b_n)< 0\}\\
&=\bigcap_{k\in \K}\bigcap_{i\in I_k}\{x_i(I_k,G_n)\geq -r_nx_i(I_k,b)\}
\cap\bigcap_{k\notin \K}\bigcup_{i\in I_k}\{x_i(I_k,G_n)< -r_nx_i(I_k,b)\}.
\end{split}
\end{equation*}
To streamline the presentation we assume momentarily that $m_0=m$ (see Remark \ref{rem:m0} for the general case) and investigate each of the intersection separately.\\
\textbf{$\mathbf{\bigcap_{i\in I_k}\{x_i(I_k,G_n)\geq -r_nx_i(I_k,b)\}}$ :} Denote by $\Pos\coloneqq \Pos[x^{\star}(b)]$ the positive indices of the primal optimal basic solution $x^\star(b)$ and set $\epsilon\coloneqq\min_{i\in\Pos}x_i^{\star}(b)$ (equal to 1 if $\Pos$ is empty). For a non-empty subset $\K\subseteq[K]$ notice that $x(I_k,b)=x^{\star}(b)$ and write the intersection in $i$ as
\begin{equation*}
\bigcap_{i\in \Pos}\left\{x_i(I_k,G_n)\ge -r_nx_i^{\star}(b)\right\} \cap 
\bigcap_{i\in I_k\setminus\Pos}\left\lbrace x_i(I_k,G_n)\ge -r_nx_i^{\star}(b)\right\rbrace.
\end{equation*}
The first intersection includes the intersection over $i\in\Pos$ of the events that $x_i(I_k,b_n-b)\ge-\epsilon$, which occurs with high probability as $x_i(I_k,b_n)$ is close to $x_i(I_k,b)$ in probability by the continuous mapping theorem. The second intersection is the event $\left\lbrace x_{I_k\setminus \Pos}(I_k,G_n)\geq 0\right\rbrace$ (recall $x_i^\star(b)=0$ for $i\in I_k\setminus\Pos$), which is the nonnegativity of the coordinates of $(A_{I_k})^{-1}G_n$ corresponding to $I_k\setminus\Pos$. More precisely, write
\begin{align*}
I_k=\{i_1^k<\dots<i_m^k\} \subseteq[d],\,
J_k=\{j\in[m]\, \vert\, i_j^k\notin \Pos[x^{\star}(b)]\}\subseteq[m],
\, (|J_k|=m-|\Pos(x^{\star}(b))|=D),
\end{align*}
and then $\{x_{I_k\setminus \Pos}(I_k,G_n)\ge0\}$ is the event $\left\lbrace G_n\in H_k\right\rbrace$ with
\begin{equation}\label{eq:Hk}
H_k
\coloneqq\bigcap_{j\in J_k}\{v\in\R^m\, \vert \,[(A_{I_k})^{-1}v]_j\geq0\}
=\bigcap_{j\in J_k}\{A_{I_k}u\,\vert\,u_j\geq0\}
=\{A_{I_k}u\,\vert\,u_{J_K}\geq0\}
\end{equation}
a closed convex cone in $\mathbb{R}^m$. In total, we obtain that 
\begin{equation}\label{eq:firstset}
\bigcap_{i\in I_k}\{x_i(I_k,G_n)\geq -r_nx_i(I_k,b)\} = \left\lbrace  G_n\in H_k\right\rbrace + o_\PR(1).
\end{equation}

\begin{rem}[Degeneracy in Linear Programs]\label{rem:degeneracy}
Notice that the index set $J_k$ defined above depends on the amount of degeneracy of the primal optimal solution $x^{\star}(b)$. In particular, if $x^{\star}(b)$ is nondegenerate then there exists only one corresponding basis $I$ such that $x^{\star}(b)=x(I,b)$, i.e., we have that $k=K=1$. Moreover, the set $J_1$ is empty and the closed convex cone $H_1^m=\R^m$.
\end{rem}
\textbf{$\mathbf{\bigcup_{i\in I_k}\{x_i(I_k,G_n)< -r_nx_i(I_k,b)\}}$ :} Dealing with the union requires some care as $k$ is possibly larger than $K$. If $k>K$, then by definition $x(I_k,b)$ is not a primal basic feasible solution for \eqref{eq:standardLP}, which means that there exists an $i\in I_k$ such that $x_i(I_k,b)<0$. Then the probability $\PR \left(\{x_i(I_k,b_n-b)<-x_i(I_k,b)\}\right)$ tends to one with increasing $n$ since again by the continuous mapping theorem $x_i(I_k,b_n)$ is close to $x_i(I_k,b)$ in probability. Therefore,
\begin{equation*}
\lim_{n\to\infty}\PR\left(\bigcap_{k>K}\bigcup_{i\in I_k}\{x_i(I_k,G_n)< -r_nx(I_k,b)\}\right)=1
\end{equation*}
and we can focus on $k\le K$. As before we decompose the union in $i\in I_k$ by
\begin{equation*}
\bigcup_{i\in \Pos}\{x_i(I_k,G_n)< -r_nx_i^{\star}(b)\}\cup
\bigcup_{i\in I_k\setminus\Pos}\{x_i(I_k,G_n)< -r_nx_i^{\star}(b)\}.
\end{equation*}
By identical arguments as before the first union has asymptotic probability zero. With the previously introduced notation of the closed convex cone $H_k$, a straightforward calculation shows that the second union is the event $\lbrace G_n\notin H_k\rbrace$. This yields that 
\begin{equation}\label{eq:secondset}
\bigcup_{i\in I_k}\{x_i(I_k,G_n)< -r_nx_i(I_k,b)\} = \left\lbrace G_n\notin H_k \right\rbrace + o_\PR(1).
\end{equation}
\textbf{The Limit for $\mathbf{\mathbbm{1}_{A_n^\K}}$ :} Combining \eqref{eq:firstset} and \eqref{eq:secondset} we conclude that asymptotically
\begin{equation*}
\begin{split}
A_n^\K
&=\bigcap_{k\in\K}\{G_n\in H_k\}\cap \bigcap_{k\notin \K}\{G_n\notin H_k\} + o_\PR(1)\\
&=\bigcap_{k\in\K}\{G_n\in H_k\}\setminus \bigcup_{k\notin \K} \{G_n\in H_k\} + o_\PR(1).
\end{split}
\end{equation*}
Denote for $\emptyset\subset\K\subseteq[K]$ the closed convex cone $H_\K=\cap_{k\in\K}H_k\subseteq\R^{m}$ and set $H_\emptyset=\emptyset$. Then we find that
\begin{equation*}
\PR (A_n^\K)
=\PR\left(G_n\in H_\K\setminus \cup_{k\notin \K}H_k\right) + o(1),
\qquad \emptyset\subset\K\subseteq[K]
,
\end{equation*}
where the union is empty if $\K=[K]$. In order to take the limit as $n$ approaches infinity, we employ the Portmanteau theorem \citep[][Theorem 2.1]{billingsley1999convergence}. For this it suffices for the boundary\footnote{We denote by $\partial A=\overline A\setminus \inte A$ the boundary of a set $A$; $\inte(A)$ and $\overline A$ denote the interior and closure of $A$, respectively.} of these sets to have Lebesgue measure zero as by assumption \eqref{ass:cltforb} the limit $G$ is absolutely continuous with respect to Lebesgue measure. Notice that\footnote{In general and as $\partial(\R^{m}\setminus A)=\partial A$, it holds $\partial(A\cap B_1\dots\cap B_l)\subseteq (\partial A)\cup \bigcup_{i=1}^l \partial (\R^{m}\setminus B_i)$.}
\begin{equation*}
\partial(H_\K\setminus \cup_{k\notin \K}H_k)
\subseteq (\partial H_\K)\cup \bigcup_{k\notin\K}\partial H_k
\end{equation*}
has Lebesgue measure zero, since $H_\K$ and $H_k$ are convex sets. Applying the Portmanteau theorem this allows to conclude for $n$ tending to infinity 
\begin{equation*}
\mathbbm{1}_{A_n^\K}
\xrightarrow[]{D} \mathbbm{1}_{G\in H_\K\setminus \cup_{k\notin \K}H_k}.
\end{equation*}

\begin{rem}[The case $m_0<m$]\label{rem:m0}
The same line of reasoning works if we only consider the first $m_0<m$ coordinates of $b_n$ to be random. Then the last $m-m_0$ coordinates of $G_n$ are zero and one can replace the closed convex cone $H_k$ by an $m_0$-dimensional closed convex cone
\begin{equation*}
H_k^{m_0}
=\bigcap_{j\in J_k}\left\{v_{m_0}\in\R^{m_0} \, \vert\,\left[(A_{I_k})^{-1}\begin{pmatrix}v_{m_0}\\0_{m-m_0}\end{pmatrix}\right]_j\ge0\right\}\, .
\end{equation*}
In particular, it holds as $n$ approaches infinity that
\begin{equation*}
\mathbbm{1}_{A_n^\K}
\xrightarrow[]{D} \mathbbm{1}_{G^{m_0}\in H_\K^{m_0}\setminus \cup_{k\notin \K}H_k^{m_0}}.
\end{equation*}
\end{rem}

\subsection{Proofs for Main Results}

With the preliminary steps from the previous subsection, we are now able to prove our main statements.

\begin{proof}[Theorem \ref{thm:limitGeneral}]\label{subsubsec:proofmain}
For the closed convex cones $H_1^{m_0},\ldots,H_K^{m_0}\subseteq \mathbb{R}^{m_0}$ and a nonempty $\K\subseteq[K]$ define the function $T^\K:\R^{|\K|}\times \R^m\to \R^d$ by
\begin{equation*}
T^\K(\alpha,v)
=\sum_{k\in\K} \mathbbm{1}_{v_{[m_0]}\in H_\K^{m_0}\setminus \cup_{k\notin \K}H^{m_0}_k}\alpha_k x(I_k,v).
\end{equation*}
This function is continuous for all $\alpha\in\R^{\K}$ and all vectors $v\in\R^m$ such that $v_{[m_0]}\notin\partial (H_\K^{m_0}\setminus\cup_{k\notin \K}H^{m_0}_k)$. In particular, the continuity set is of full measure with respect to $(\alpha^{\K},G)$, because based on our previous discussion in Section \ref{subsubsec:convexcones} the probability that $G^{m_0}$ is in the boundary vanishes for all non-empty $\K\subseteq[K]$. As there are finitely many possible subsets $\K$ denoted by $\K_1,\dots,\K_B$, the function $T=\left(T^{\K_1},\dots,T^{\K_B}\right)\colon \R^{\sum_{i=1}^B|\K_i|}\times \R^m\to (\R^d)^B$ defined by
\begin{equation*}
T\left(\alpha^{\K_1},\dots,\alpha^{\K_B},v\right)
=\left(T^{\K_1}(\alpha^{\K_1},v),\dots,T^{\K_B}(\alpha^{\K_B},v)\right)
\end{equation*}
is continuous $G$-almost surely. The continuous mapping theorem together with the assumed joint distributional convergence of the random vector $(\alpha_n^\K,G_n)$ yield that
\begin{equation*}
\begin{split}
&\sum_{\emptyset\subset\K\subseteq[K]}  \mathbbm{1}_{G_n^{m_0}\in H_\K^{m_0}\setminus \cup_{k\notin \K}H_k^{m_0}}\,\alpha_n^\K\otimes x\left(I_\K,G_n\right)
= \sum_{\emptyset\subset\K\subseteq[K]}   T^\K\left(\alpha^{\K}_n,G_n\right)\\
&\xrightarrow[]{D} \sum_{\emptyset\subset\K\subseteq[K]}  T^\K\left(\alpha^{\K},G\right)=
\sum_\K \mathbbm{1}_{G^{m_0}\in H_\K^{m_0}\setminus \cup_{k\notin\K}H_k^{m_0}}\,\alpha^\K \otimes x\left(I_\K,G\right),
\end{split}
\end{equation*}
which finishes the proof for Theorem \ref{thm:limitGeneral}. Notice that according to Remark \ref{rem:degeneracy} if the primal and dual feasible basis $I_1$ is unique the above sum collapses to a single summand and the limit law takes the form $r_n\left(x^{\star}(b_n) - x^{\star}(b)\right)
\xrightarrow[]{D} M(G) \coloneqq x(I_1,G)$.
\end{proof}

\begin{proof}[Theorem \ref{thm:limit}]
Since the result holds without any regularity conditions on $G$ or $G^{m_0}$, we can assume that $m_0=m$.  We proceed by showing that, in presence of \eqref{ass:dualnondegen}, the function
\begin{align*}
T\left(\alpha^{\K_1},\dots,\alpha^{\K_B},v\right)
=\sum_{\emptyset\subset\K\subseteq[K]}  \mathbbm{1}_{v\in H_\K\setminus \cup_{k\notin \K}H_k}\,\alpha^\K\otimes x\left(I_\K,v\right),
\end{align*}
as defined in the previous proof, does not depend on $\alpha=\left(\alpha^{\K_1},\dots,\alpha^{\K_B}\right)$. We rewrite
\begin{align*}
H_k&= \left\{v\in\R^{m} \, \vert\,\left[(A_{I_k})^{-1}v\right]_j\ge0,\, \forall j\in J_k\right\}\\
&= \left\{v\in\R^{m} \, \vert\,\left[(A_{I_k})^{-1}v\right]_j\geq -\frac{1}{\eta}\left[x^\star(b)\right]_j,\, \text{ for some }\eta>0,\, \forall j\in I_k\right\}\\
&=\left\{v\in\R^{m} \, \vert\,(A_{I_k})^{-1}v\geq -\frac{1}{\eta}A_{I_k}^{-1}b,\, \text{ for some }\eta>0\right\}\\
&=\left\{v\in\R^{m} \, \vert\,(A_{I_k})^{-1}\left(b+\eta v\right)\geq 0  \text{ for some }\eta>0\right\}.
\end{align*}
In particular, the cone $H_k$ is the set of all directions $v\in\mathbb{R}^{m}$ such that for some $\eta>0$ the perturbed linear program (\hyperref[eq:standardLP]{$\text{P}_{b+\eta v}$}) has $I_k$ as optimal basis. According to Lemma \ref{lem:dualnondegenerate}, $\lambda(I_k)$ is nondegenerate and optimal for (\hyperref[eq:standardLP]{$\text{P}_{b}$}); it remains so for problem (\hyperref[eq:standardDP]{$\text{D}_{b+\eta v}$}), and therefore the corresponding primal solution for (\hyperref[eq:standardLP]{$\text{P}_{b+\eta v}$}) is unique (see Proposition \ref{prop:slackness}). Hence, for any $v\in H_j^{m}\cap H_k^{m}$ with $1\leq j<k\leq K$ we deduce that $x(I_j,b+\eta v)=x(I_k,b+\eta v)$ and consequently $x(I_j,v)=x(I_k,v)$.  For $v\in\mathcal{U}\coloneqq\cup_{k=1}^K H_k$, define the index set $\K(v)\coloneqq \left\lbrace k\in [K]\mid v\in H_k\right\rbrace \neq \emptyset$ and denote its minimal index by $K(v)\coloneqq \min \K(v)$. The sets defining the indicator functions for $T$ are by construction disjoint and their union is $\mathcal{U}$. We therefore conclude that
\begin{align*}
T\left(\alpha^{\K_1},\dots,\alpha^{\K_B},v\right)= \alpha^{\K(v)}\otimes x(I_{\K(v)},v)=x(I_{K(v)},v).
\end{align*}
Thus, $T$ does not depend on $\alpha$ and simplifies to
\begin{align*}
T(v)=
\sum_{k=1}^K \mathbbm{1}_{K(v)=k}\,x\left(I_k,v\right)=
\sum_{k=1}^K \mathbbm{1}_{v\in H_k\setminus \cup_{j<k}H_j}\,x\left(I_k,v\right).
\end{align*}
To prove continuity of $T$, let us now fix $v_0\in \mathcal{U}$. Since all cones $H_k$ are closed, there exists $\epsilon>0$ such that for $v\in\mathcal{U}$ with $\Vert v-v_0\Vert<\epsilon$ it holds $\K(v)\subseteq\K(v_0)$. This yields $T(v)=x(I_{K(v_0)},v_0)=x(I_{K(v)},v_0)$ and we deduce
\begin{align*}
\Vert T(v)-T(v_0)\Vert=\Vert x(I_{K(v)},v-v_0)\Vert\leq \max_{k\le K} \Vert A_{I_k}^{-1}\Vert_\infty \Vert v-v_0\Vert.
\end{align*}
Hence, $T$ is Lipschitz continuous on $\mathcal U$ with Lipschitz constant $\max_{k\le K} \Vert A_{I_k}^{-1}\Vert_\infty$. The statement now follows by the continuous mapping theorem and the limit reads as
\begin{align*}
T(G)=\sum_{k=1}^K \mathbbm{1}_{G\in H_k\setminus \cup_{j<k}H_j}\,x\left(I_k,G\right).
\end{align*}
If $G$ is absolutely continuous, this can be further simplified to $T(G)=\sum_{k=1}^K \mathbbm{1}_{G\in H_k}\,x\left(I_k,G\right)$ as any intersection $H_j\cap H_k$ has Lebesgue measure zero. To see this notice for $v\in H_j\cap H_k$ and some $\eta>0$, it holds
\begin{align*}
c^Tx(I_j,b+\eta v)= c^Tx(I_k,b+\eta v)\quad
\Leftrightarrow\quad &c^T A_{I_j}^{-1}(b+\eta v)=c^T A_{I_k}^{-1}(b+\eta v)\\
\Leftrightarrow\quad &\left(\lambda(I_k)-\lambda(I_j) \right)^T v =0,
\end{align*} 
where we use the definition for corresponding optimal dual solution $\lambda(I_k)=c^T A_{I_k}^{-1}$. By \eqref{ass:dualnondegen} the vector $\lambda(I_k)-\lambda(I_j)$ is nonzero and hence $v$ is contained in its orthogonal complement, which indeed has Lebesgue measure zero.
\end{proof}

\begin{proof}[Theorem \ref{thm:Hausdorff}]\label{proof:Hausdorff}
Recall the definition of the Hausdorff distance
\begin{equation*}
d_H\left(Opt(b_n),Opt(b)\right)\coloneqq \max \left\lbrace\sup_{x\in OPT(b_n)}\inf_{y\in OPT(b)}\Vert x-y\Vert, \sup_{x\in OPT(b)}\inf_{y\in OPT(b_n)}\Vert x-y\Vert\right\rbrace.
\end{equation*}
We prove that it is bounded in probability $O_\PR (r_n^{-1})$ by considering separately each of the two random quantities over which the maximum is defined. Proving boundedness in probability for the first random quantity
\begin{equation*}
\sup_{x\in OPT(b_n)}\inf_{y\in OPT(b)}\|x-y\|
\end{equation*}
relies on the observation from Lemma \ref{lem:feas} that if $n$ approaches infinity the optimal primal and dual bases for (\hyperref[eq:standardLP]{$\text{P}_{b_n}$}) are contained in the optimal primal and dual bases for \eqref{eq:standardLP} with probability tending to one. According to Lemma \ref{lem:rulesOnly}, we deduce that with high probability 
\begin{equation*}
\emptyset \subset OPT(b_n) \subseteq \Conv\{x(I_{[K]},b_n)\}.
\end{equation*}
Since by assumption $b_n-b\in O_\PR (r_n^{-1})$, we conclude
\begin{equation*}
\sup_{x\in OPT(b_n)}\inf_{y\in OPT(b)}\|x-y\|
\leq
\sup_{\alpha\in \Delta_{K}} \left\|\alpha\otimes x(I_{[K]},b_n-b)\right\|
\leq \max_{k\le K}\|x(I_k,b_n-b)\|
=O_\PR (r_n^{-1}).
\end{equation*}
In fact, if $OPT(b)=\{x^\star(b)\}$ is a singleton the proof for the Hausdorff distance is completed as
\begin{equation*}
d_H\left(Opt(b_n),Opt(b)\right)=\sup_{x\in OPT(b_n)}\Vert x-x^\star(b)\Vert,
\end{equation*}
which is already known to be $O_\PR (r_n^{-1})$. In particular, by \eqref{ass:optBounded} this holds for $b=0$ and hence without loss of generality we may assume from now on that $b\neq 0$. However, if $OPT(b)$ is not a singleton then proving the boundedness for the second random quantity 
\begin{equation*}
\sup_{x\in OPT(b)}\inf_{y\in OPT(b_n)}\Vert x-y\Vert
\end{equation*}
requires some more care. More precisely, in presence of degeneracy for any primal optimal basic solutions for \eqref{eq:standardLP} the primal and dual optimal bases for (\hyperref[eq:standardLP]{$\text{P}_{b_n}$}) can be a strict subset of primal and dual optimal bases for \eqref{eq:standardLP}. Hence, our previous argument does not apply. We define an equivalence relation on the set of all primal and dual optimal bases $I_1,\ldots,I_K$ for \eqref{eq:standardLP} as follows:
\begin{equation*}
j\sim k :\Longleftrightarrow x(I_j,b)=x(I_k,b), \, \, j,k\in [K].
\end{equation*}
We denote by $\mathcal{B}_1,\ldots,\mathcal{B}_T$ the induced equivalence classes.

\begin{lem}\label{lem:basisequivalenceclasses}
There exists $\epsilon>0$ such that for all $\tilde{b}$ with $(\text{P}_{\tilde{b}})$ feasible and $\Vert \tilde{b}-b\Vert <\epsilon$ it holds that for any $1\leq t\leq T$ there exists $k\in \mathcal{B}_t$ such that $x(I_k,\tilde{b})\in OPT(\tilde{b})$.
\end{lem}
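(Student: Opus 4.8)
The plan is to reduce the claim to the survival of \emph{primal} feasibility of one basis per class, and to obtain such a basis as an optimal basis of a mildly perturbed linear program whose unique optimum at $b$ is the vertex attached to that class. Fix $1\le t\le T$, put $x^\star_t:=x(I_k,b)$ for any $k\in\mathcal B_t$ and $S_t:=\Pos(x^\star_t)=\supp(x^\star_t)$; since $x_j(I_k,b)=0$ for $j\notin I_k$, every $k\in\mathcal B_t$ obeys $I_k\supseteq S_t$ and $x(I_k,b)=x^\star_t$. Dual feasibility of a basis is a property of $(A,c)$ only, so each $\lambda(I_k)$ remains dual feasible for $(\text{D}_{\tilde b})$; by strong duality (Theorem~\ref{thm:strongduality}(ii)) it therefore suffices to find, for $\tilde b$ close to $b$ with $(\text{P}_{\tilde b})$ feasible, some $k\in\mathcal B_t$ with $x(I_k,\tilde b)\ge 0$, since then $x(I_k,\tilde b)\in OPT(\tilde b)$ automatically.

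By Lemma~\ref{lem:rulesOnly} and \eqref{ass:optBounded}, $OPT(b)$ is a polytope, and $x^\star_t$, being a basic feasible solution of $(\text{P}_b)$ lying in the face $OPT(b)$, is one of its vertices; hence there is $w\in\R^d$ with $\argmin_{x\in OPT(b)}w^Tx=\{x^\star_t\}$. I would then fix $\eta_\star>0$ small enough that \textbf{(a)} $x\mapsto(c+\eta_\star w)^Tx$ is bounded below on $P(b)$ and attains its minimum over $P(b)$ only at $x^\star_t$, and \textbf{(b)} every basis $I\supseteq S_t$ dual feasible for the objective $c+\eta_\star w$ is already dual feasible for $c$. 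Property (a) holds for all small $\eta$: boundedness of $OPT(b)$ forces $c^Tr>0$ on each nonzero recession direction $r$ of $P(b)$, so $(c+\eta w)^Tr>0$ there once $\eta$ is small, and then the classical ``minimise $c$, then break ties with $w$'' threshold estimate over the finitely many vertices of $P(b)$ yields uniqueness of the minimiser. Property (b) holds for small $\eta$ because a basis carrying a negative $c$-reduced cost also carries a negative $(c+\eta w)$-reduced cost once $\eta$ lies below an explicit threshold, and only finitely many bases $I\supseteq S_t$ must be checked. Intersecting these threshold conditions over $t=1,\dots,T$ produces one $\eta_\star$ (and one $\epsilon$ below) good for all classes.

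With $\eta_\star$ fixed, consider $(\mathrm{P}'_{\tilde b})$: minimise $(c+\eta_\star w)^Tx$ subject to $Ax=\tilde b$, $x\ge 0$. It has the same feasible set as $(\text{P}_{\tilde b})$ and, by (a), satisfies \eqref{ass:optBounded} at $b$ with optimality set $\{x^\star_t\}$. Whenever $(\text{P}_{\tilde b})$ is feasible, Lemma~\ref{lem:rulesOnly} applied to $(\mathrm{P}'_{\tilde b})$ supplies a basis $\hat I$ that is primal feasible for $(\mathrm{P}'_{\tilde b})$ and dual feasible for $c+\eta_\star w$, so $x(\hat I,\tilde b)\ge 0$. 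Exactly as in the proof of Lemma~\ref{lem:feas}, there is $\epsilon>0$ such that, for $\lVert\tilde b-b\rVert<\epsilon$, any basis that is primal infeasible at $b$ stays primal infeasible at $\tilde b$; hence $x(\hat I,b)\ge 0$, and, being also dual feasible for $c+\eta_\star w$, $\hat I$ is an optimal basis of $(\mathrm{P}'_b)$, whence $x(\hat I,b)=x^\star_t$ and $\hat I\supseteq S_t$. By (b) the basis $\hat I$ is dual feasible for $c$; together with $x(\hat I,b)=x^\star_t\ge 0$ this makes $\hat I$ a primal and dual feasible basis for $(\text{P}_b),(\text{D}_b)$ inducing $x^\star_t$, i.e.\ $\hat I=I_k$ for some $k\in\mathcal B_t$, and $x(I_k,\tilde b)=x(\hat I,\tilde b)\ge 0$, which is the reduction target.

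I expect the main obstacle to be the construction of the auxiliary objective: one needs a \emph{single} perturbation size $\eta_\star$ that simultaneously isolates $x^\star_t$ as the unique optimum of $(\mathrm{P}'_b)$ and preserves $c$-dual feasibility across all bases extending $S_t$, and this is precisely where the combinatorics of degeneracy genuinely enters. Everything else is routine bookkeeping with strong duality, Lemma~\ref{lem:rulesOnly}, and the perturbation argument already employed for Lemma~\ref{lem:feas}.
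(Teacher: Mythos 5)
Your argument is correct, but it takes a genuinely different route from the paper's. The paper builds an auxiliary primal--dual pair $(\text{P}_{b,Pos_t})$, $(\text{D}_{b,Pos_t})$ by freeing the coordinates indexed by $Pos_t$ in the primal (equivalently, turning the corresponding dual inequalities into equalities), so that the dual basic solutions of the restricted pair are exactly the $\lambda(I_k)$ with $k\in\mathcal B_t$; for $\tilde b$ close to $b$ with $(\text{P}_{\tilde b})$ feasible, strong duality for this restricted pair and continuity of $x(I_k,\cdot)$ (guaranteeing $Pos_t\subseteq\Pos(x(I_k,\tilde b))$) hand over a basis in $\mathcal B_t$ inducing a feasible, hence optimal, solution of $(\text{P}_{\tilde b})$. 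You instead perturb the objective: pick a functional $w$ that strictly isolates the vertex $x^\star_t$ of the polytope $OPT(b)$, take $\eta_\star$ small enough that $(c+\eta_\star w)^Tx$ has $x^\star_t$ as its unique minimizer over $P(b)$ and that dual feasibility with respect to $c+\eta_\star w$ on bases $\supseteq S_t$ implies dual feasibility with respect to $c$, apply Lemma~\ref{lem:rulesOnly} to the perturbed program at $\tilde b$, and then pull the resulting basis back to $b$ via the sign-preservation estimate from the proof of Lemma~\ref{lem:feas}, identifying it as a member of $\mathcal B_t$. Both proofs are sound; the paper's relaxation of the feasible set makes the appearance of $\mathcal B_t$ immediate and is shorter, while your lexicographic tie-breaking perturbation is a classical LP device that costs you the two extra threshold estimates (a) and (b) but, in exchange, reuses Lemmas~\ref{lem:rulesOnly} and~\ref{lem:feas} as black boxes and exhibits the conclusion as a consequence of generic objective perturbation. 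One clarification worth making explicit in a final write-up: the separating vector $w$ is necessarily class-dependent (say $w_t$), so the intersected threshold $\eta_\star=\min_t\eta_\star(t)$ is uniform, but the perturbed program is still one per $t$; the $\epsilon$ for primal sign preservation, being a property of $A$ and $b$ alone, is genuinely uniform over all bases and hence over $t$.
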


\begin{proof}
For any $1\leq t\leq T$ denote by $Pos_t\subseteq[d]$ the positivity set of the primal optimal basic solution defined by all bases in equivalence class $\mathcal{B}_t$. Since $b\ne0$, the set $Pos_t$ is nonempty. Consider the pair of primal and dual linear programs
\begin{center}
\begin{minipage}{0.45\textwidth}
\begin{equation}\label{eq:dualvalue1}
\begin{aligned}
\max_{\lambda\in\mathbb{R}^m} \quad
\lambda^Tb & \\
\text{s.t.} \quad
[A^T\lambda]_{[d]\setminus Pos_t} \leq& c_{[d]\setminus Pos_t},\\
[A^T\lambda]_{Pos_t} = & c_{Pos_t},\\
\end{aligned}\tag{$\text{D}_{b,Pos_t}$}
\end{equation}
\end{minipage}
\begin{minipage}{0.45\textwidth}
\begin{equation}\label{eq:primalvalue1}
\begin{aligned}
\min\limits_{x\in \mathbb{R}^d}{} \quad
c^T x& \\
\text{s.t.} \quad
Ax = b,  \\
x_{[d]\setminus Pos_t}\geq 0.
\end{aligned}\tag{$\text{P}_{b,Pos_t}$} 
\end{equation}
\end{minipage}
\end{center}
By construction the basic solutions for \eqref{eq:dualvalue1} are $\lambda(I_k)$ for each $k\in \mathcal{B}_t$. By continuity of $x(I_k,b)$ in $b$ choose an $\epsilon>0$ such that $\Vert \tilde{b}-b\Vert<\epsilon$ and $Pos_t\subset Pos(x(I_k,\tilde{b}))$. By assumption $(\text{P}_{\tilde{b}})$ is feasible and its optimality set is bounded. Hence, $(\text{D}_{\tilde{b},Pos_t})$ is bounded and feasible and strong duality holds. This yields a representative $k\in \mathcal{B}_t$ such that $x(I_k,\tilde{b})\geq 0$ is optimal for $(\text{P}_{\tilde{b}})$, an hence contained in $OPT(\tilde{b})$. As $1\leq t\leq T$ is chosen arbitrarily the proof is finished.
\end{proof}

We are now able to prove that even if the primal \eqref{eq:standardLP} does not admit a unique optimal solution, we still have
\begin{equation*}
\sup_{x\in OPT(b)}\inf_{y\in OPT(b_n)}\Vert x-y\Vert \in O_\PR (r_n^{-1}).
\end{equation*}
Specifically, we apply Lemma \ref{lem:basisequivalenceclasses} to all $1\leq t\leq T$. Then as $n$ tends to infinity we find with probability $1-o(1)$ that $\Vert b_n-b\Vert<\epsilon$ and together with assumption \eqref{ass:feasforn} and Lemma \ref{lem:basisequivalenceclasses} there exists $k_t(n)\in \mathcal{B}_t$ such that $x(I_{k_t(n)},b_n)\in OPT(b_n)$. Observe that if we pick for any equivalence class $\mathcal{B}_t$ an arbitrary representative $k_t$ to obtain a sequence $\{k_t\}_{t\in T}$ of full representation then we can rewrite $OPT(b)=Conv(\{x(I_{k_t},b)\, \mid \, 1\le t\le T\})$. In particular, for $n$ large enough $\{k_t(n)\}_{t\in T}$ is a random full representation, i.e., $OPT(b)=Conv(\{x(I_{k_t(n)},b)\, \mid \, 1\le t\le T\})$. We deduce that with high probability
\begin{equation*}
\begin{split}
\sup_{y\in OPT(b)}\inf_{x\in OPT(b_n)}\Vert x-y\Vert &= \sup_{\alpha\in\Delta_T}\inf_{x\in OPT(b_n)}\left\Vert x - \sum_t \alpha_tx(I_{k_t}(n),b)\right\Vert\\
&\leq \sup_{\alpha\in\Delta_T}\left\Vert\sum_{t=1}^T \alpha_tx(I_{k_t}(n),b_n-b)\right\Vert\\
&\leq \max_{1\leq t\leq T} \left\Vert x(I_{k_t}(n),b_n-b)\right\Vert =O_p(r_n^{-1}).
\end{split}
\end{equation*}
\end{proof}

\begin{proof}[Proposition \ref{prop:cltoptimalvalue}]
In fact, the statement can be deduced from our main Theorem \ref{thm:limitGeneral}. However, the observation in Lemma \ref{lem:feas} paves the way for a more direct proof.

Denote again by $I_1,\ldots,I_N$ all dual optimal basic solutions for \eqref{eq:standardDP} among those $I_1,\ldots,I_K$ for $K\leq N$ induce primal and dual optimal solutions for \eqref{eq:standardLP} and \eqref{eq:standardDP}, respectively. According to Lemma \ref{lem:feas}, as $n$ tends to infinity the probability that at least one of the bases $I_1,\ldots,I_K$ induce also primal and dual optimal solutions for (\hyperref[eq:standardLP]{$\text{P}_{b_n}$}) and (\hyperref[eq:standardDP]{$\text{D}_{b_n}$}), respectively, tends to one. More precisely, and with the notation for the sets $B_n^k$ as defined on page \pageref{eq:Bnkdual} this means for the event 
\begin{equation*}
F_n\coloneqq \bigcup_{1\leq k \leq K} B_n^k\subseteq \Omega
\end{equation*}
that $\lim_{n\to\infty}\PR(F_n)=1$. We now decompose the random quantity $r_n(c(b_n)-c(b))$ conditioned on the event $F_n$ and apply strong duality. Notice that if event $F_n$ occurs then at least one of $I_1,\ldots,I_K$ induces a dual optimal basic solution for (\hyperref[eq:standardDP]{$\text{D}_{b_n}$}). This choice of basis depends on $n$, however, the optimal value for (\hyperref[eq:standardDP]{$\text{D}_{b_n}$}) is equal to $\max_{1\leq k\leq K}b_n^T\lambda(I_k)$ since all $\lambda(I_k)$ remain basic feasible solutions for (\hyperref[eq:standardDP]{$\text{D}_{b_n}$}). In particular, we find
\begin{equation*}
\begin{split}
r_n(c(b_n)-c(b))&=\mathbbm{1}_{F_n} \,r_n(c(b_n)-c(b)) + \mathbbm{1}_{F_n^c}\,r_n(c(b_n)-c(b))\\
&=\mathbbm{1}_{F_n} \,r_n\left(\max_{1\leq k\leq K}b_n^T\lambda(I_k)-\max_{1\leq k\leq K}b^T\lambda(I_k)\right) + o_\PR(1).
\end{split}
\end{equation*}
Finally, the value $\max_{1\leq k\leq K}b^T\lambda(I_k)$ does not depend on $k$ since all $\lambda(I_k)$ are by definition dual optimal basic solutions for \eqref{eq:standardDP}. Therefore, by continuity, the central limit law for $r_n(b_n-b)\xrightarrow[]{D} G$ and an application of Slutzky we conclude
\begin{equation*}
\begin{split}
&\mathbbm{1}_{F_n} \,r_n\left(\max_{1\leq k\leq K}b_n^T\lambda(I_k)-\max_{1\leq k\leq K}b^T\lambda(I_k)\right) + o_\PR(1) \\
= &\mathbbm{1}_{F_n} \, \left(\max_{1\leq k\leq K}r_n\left(b_n^T-b^T\right)\lambda(I_k)\right) + o_\PR(1) \xrightarrow[]{D}\max_{1\leq k\leq K}G^T\lambda(I_k).
\end{split}
\end{equation*}
\end{proof}

\begin{proof}[Theorem \ref{thm:support}]
Define the event
\begin{equation*}
GoodOpt
=\{P(b_n)\neq\emptyset\}\setminus \cup_{k>K}B_n^k
\end{equation*}
that the primal (\hyperref[eq:standardLP]{$\text{P}_{b_n}$}) is feasible and the only bases that induce a primal optimal basic solution $x^{\star}(b_n)$ are $I_1,\ldots,I_K$ that also induce an optimal $x^{\star}(b)\in OPT(b)$ by definition. An application of Lemma \ref{lem:feas} to $I_k$ for $k>K$ shows that
\begin{equation}\label{eq:PgoodOptexp}
\begin{split}
\PR\left(goodOpt\right)=1 - \PR \left(goodOpt^c\right)= 1- \PR\left(\cup_{k>K}B_n^k \cup \{P(b_n)=\emptyset\}\right)\\
\geq 1- \left(\sum_{l=K+1}^N\PR\left(B_n^l\right) + \PR \left(\left\{P(b_n)=\emptyset\right\}\right)\right)
\to 1,
\end{split}
\end{equation}
where we invoke the (asymptotic) existence assumption \eqref{ass:feasforn}. Recall that $\Pos[x]$ is the set of entries where $x$ is not zero.
\begin{lem}\label{lem:support}
Suppose assumptions \eqref{ass:uniquexb} and \eqref{ass:feasforn} hold. Then any (measurable) choice of $x^{\star}(b_n)$ satisfies
\begin{equation*}
\lim_{n\to\infty}\PR\left(\Pos[x^{\star}(b_n)] \supseteq \Pos[x^{\star}(b)]\right)=1.
\end{equation*}
\end{lem}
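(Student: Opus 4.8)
The plan is to exploit the event $GoodOpt$ introduced just above, together with the convex-hull description of $OPT(b_n)$ from Lemma \ref{lem:rulesOnly}. On $GoodOpt$ the primal $(\text{P}_{b_n})$ is feasible and the only bases inducing a primal optimal basic solution for it lie among $I_1,\dots,I_K$, so that any measurable choice of $x^\star(b_n)$ takes the form $x^\star(b_n)=\sum_{k\in\K}(\alpha_n^\K)_k\,x(I_k,b_n)$ for some non-empty $\K\subseteq[K]$ and some $\alpha_n^\K\in\Delta_{|\K|}$. The crucial elementary observation is that positivity of a fixed coordinate is inherited by convex combinations, so it suffices to control the finitely many basic solutions $x(I_1,b_n),\dots,x(I_K,b_n)$.

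First I would dispose of the trivial case $\Pos[x^\star(b)]=\emptyset$. Otherwise, set $\epsilon\coloneqq\min_{i\in\Pos[x^\star(b)]}x_i^\star(b)>0$. Since \eqref{ass:uniquexb} forces $x(I_k,b)=x^\star(b)$ for every $k\le K$, linearity of $\Aug_{I_k}\circ(A_{I_k})^{-1}$ gives $x(I_k,b_n)-x^\star(b)=x(I_k,b_n-b)$, a fixed linear image of $b_n-b$. Because $b_n$ converges to $b$ in probability (a consequence of \eqref{ass:cltforb}, which is in force throughout this subsection), the continuous mapping theorem yields that $\max_{k\le K}\Vert x(I_k,b_n-b)\Vert_\infty$ tends to zero in probability; let $C_n$ denote the event that this maximum is strictly below $\epsilon/2$, so that $\PR(C_n)\to1$.

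Finally I would argue on the event $GoodOpt\cap C_n$, whose probability tends to one by \eqref{eq:PgoodOptexp} and the previous step. On this event, for every $i\in\Pos[x^\star(b)]$ and every $k\in\K$ we have $x_i(I_k,b_n)\ge x_i^\star(b)-\epsilon/2>\epsilon/2$, and therefore
\[
x_i^\star(b_n)=\sum_{k\in\K}(\alpha_n^\K)_k\,x_i(I_k,b_n)\ \ge\ \min_{k\in\K}x_i(I_k,b_n)\ >\ \frac{\epsilon}{2}\ >\ 0,
\]
since $\alpha_n^\K$ lies in the simplex $\Delta_{|\K|}$. Hence $\Pos[x^\star(b_n)]\supseteq\Pos[x^\star(b)]$ on $GoodOpt\cap C_n$, and passing to probabilities completes the proof. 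There is no serious obstacle here; the only point requiring care is that $x^\star(b_n)$ is merely a measurable selection from a possibly non-singleton optimality set, which is precisely why the convex-hull representation of $OPT(b_n)$ (valid on $GoodOpt$) is used rather than a statement about basic solutions alone.
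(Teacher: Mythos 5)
Your proof is correct and follows essentially the same route as the paper: restrict to the event $GoodOpt$, represent $x^\star(b_n)$ via Lemma \ref{lem:rulesOnly} as a convex combination of $x(I_k,b_n)$ for $k\le K$, and use the continuous mapping theorem to conclude each such basic solution is coordinatewise close to $x^\star(b)$. The only cosmetic difference is that you work on a single high-probability event $C_n$ where all basic solutions are within $\epsilon/2$ of $x^\star(b)$, whereas the paper directly subtracts a union bound over the events $\{x_i(I_k,b_n)=0\}$ for $i\in\Pos[x^\star(b)]$ and $k\le K$; both arguments are equivalent.
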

\begin{proof}
According to Lemma \ref{lem:rulesOnly}, whenever $goodOpt$ occurs, any choice of $x^{\star}(b_n)$ must belong to the convex hull of $\{x(I_k,b_n) \, \vert \, k\le K\}$. Therefore, the probability of the event in the statement of the lemma is not smaller than
\begin{equation*}
\PR \left(goodOpt\right)
-\sum_{i\in \Pos[x^{\star}(b)]}\sum_{k=1}^K \PR \left(x_i(I_k,b_n)=0\right).
\end{equation*}
Because of \eqref{eq:PgoodOptexp} we only need to bound the probabilities in the last sum.  But they clearly converge to zero since by the continuous mapping theorem $x_i(I_k,b_n)\to x_i(I_k,b)=x_i^{\star}(b)>0$ in probability.
\end{proof}
Continuing the proof, the first assertion follows from \eqref{eq:PgoodOptexp} and Lemma \ref{lem:support}. Now let $i$ be an index corresponding to a degenerate zero. Then there exists $k_0\le K$ such that $i\in I_{k_0}$ and $x_i(I_{k_0},b)=0$. The closed convex cone $H_{k_0}$ defined in \eqref{eq:Hk} contains an open neighborhood around $A_{I_{k_0}}\mathbf 1_m$. Hence $\PR (G\in H_{k_0})>0$ and consequently there exists $\K\supseteq\{k_0\}$ such that $2\delta=\PR (G\in H_\K\setminus H_{[K]\setminus \K})>0$ and thus $\PR (A_n^\K)>\delta>0$ for $n$ large. Since $\alpha_n^\K$ converges in distribution to $\alpha^\K$, which has positive probability of being positive on the coordinate corresponding to $k_0$, for $n$ large there is positive probability that
\begin{equation*}
x^{\star}(b_n)
=\sum_{k\in\K} (\alpha_n^\K)_k x(I_k,b_n)
\end{equation*}
with $(\alpha_n^\K)_{k_0}>0$. Moreover the probability that $x_i(I_{k_0},b_n)=0$ vanishes as $n$ tends to infinity, because $r_nx_i(I_{k_0},b_n)=x_i(I_{k_0},G_n)\to x_i(I_{k_0},G)$, which is a continuous random variable because $G$ is absolutely continuous. All this combined yields that $\PR (x_i^{\star}(b_n)>0)$ is positive for all $n$ large enough.
\end{proof}

\section{On the Assumptions}\label{sec:Assumptions}
This section elaborates on the assumptions stated in Section \ref{sec:Preliminaries}. The deterministic assumptions \eqref{ass:optBounded}, \eqref{ass:uniquexb} and \eqref{ass:dualnondegen} are related by
\begin{equation*}
\{\eqref{ass:optBounded} \ \&\  \eqref{ass:dualnondegen}\}
\implies \eqref{ass:uniquexb} \implies \eqref{ass:optBounded},
\qquad
\eqref{ass:dualnondegen}   \mathrel{{\ooalign{\hidewidth$\not\phantom{=}$\hidewidth\cr$\implies$}}}
 \eqref{ass:optBounded},
\end{equation*}
where the first implication is shown in Lemma \ref{lem:dualnondegenerate} and the second is obvious. Regarding the stochastic assumptions \eqref{ass:cltforb}--\eqref{ass:feasforn}, we have (see Lemma \ref{lem:feasibility})
\begin{equation*}
\{Slater(b) \ \&\  \eqref{ass:cltforb}\} \implies \eqref{ass:feasforn},
\end{equation*}
where $Slater(b)$ is the condition that the feasible set $P(b)=\{x\in\mathbb R^d:Ax=b,x\ge0\}$ contains a positive element $x\in(0,\infty)^d$ (Slater's constraint qualification \citep{boyd2004convex}). The assumption \eqref{ass:cltforb} has to be checked for each particular case and can usually be verified by an application of the central limit theorem.
Further statements regarding the assumptions are stated below, and proven in Appendix \ref{Appendix:Assumptions}.\\
\textbf{Feasibility assumption.} The assumption \eqref{ass:feasforn} is obviously necessary for the limiting distribution to exist. It is verified if the convex polytope $P(b)=\{x\in\mathbb{R}^m\,\vert\,Ax=b,x\geq0\}$ is feasible and bounded whenever $\|b-b_0\|<\epsilon$ for some $\epsilon>0$.

\begin{lem}[Boundedness]\label{lem:boundedness1}
Suppose there exists some $b_0\in\mathbb{R}^m$ such that the polytope $P(b_0)=\{x\in\mathbb{R}^m\,\vert\,Ax=b_0,x\geq 0\}$ is feasible and bounded. Then for all $b\in\mathbb{R}^m$ the polytope $P(b)$ is bounded.
\end{lem}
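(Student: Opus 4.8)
The statement is a purely affine-geometric fact about the recession cone of the feasibility polytope $P(b) = \{x \in \R^d : Ax = b,\ x \geq 0\}$. The key observation is that the recession cone of $P(b)$ does not depend on $b$ at all: for any $b$ with $P(b) \neq \emptyset$, the recession cone equals $\{x \in \R^d : Ax = 0,\ x \geq 0\}$, which I will call $\K_0$. Indeed, if $x_0 \in P(b)$ and $v \in \K_0$, then $x_0 + tv \in P(b)$ for all $t \geq 0$; conversely any recession direction of $P(b)$ must satisfy $Av = 0$ (to keep $Ax = b$) and $v \geq 0$ (to stay in the nonnegative orthant in the limit). A nonempty polytope of the form $P(b)$ is bounded if and only if its recession cone is trivial, i.e. $\K_0 = \{0\}$. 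So the content of the lemma is simply: boundedness of $P(b_0)$ for one feasible $b_0$ forces $\K_0 = \{0\}$, and hence $P(b)$ is bounded for every $b$ (trivially so when $P(b) = \emptyset$, which is allowed as ``bounded'').

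First I would record the characterization of the recession cone: for a nonempty closed convex set $C = \{x : Ax = b,\ x \geq 0\}$, the recession cone is $\{v : Av = 0,\ v \geq 0\} = \K_0$; this is standard (e.g.\ \cite{bertsimas1997introduction}) and I would cite it rather than reprove it. Second, I would invoke the elementary fact that a nonempty closed convex set is bounded if and only if its recession cone contains only the zero vector. Applying this to $P(b_0)$: by hypothesis $P(b_0)$ is nonempty and bounded, so its recession cone $\K_0$ is $\{0\}$. Third, fix an arbitrary $b \in \R^m$. If $P(b) = \emptyset$ there is nothing to prove. Otherwise $P(b)$ is a nonempty closed convex set whose recession cone is again $\K_0 = \{0\}$, hence $P(b)$ is bounded.

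Alternatively — and this is perhaps cleaner to write without invoking recession-cone machinery explicitly — I would argue by contradiction at the level of sequences. Suppose $P(b)$ is unbounded for some $b$; pick $x_n \in P(b)$ with $\|x_n\| \to \infty$, and also fix some $x_0 \in P(b_0)$ (nonempty by hypothesis). Passing to a subsequence, $x_n / \|x_n\| \to v$ with $\|v\| = 1$, $v \geq 0$ (as a limit of nonnegative vectors), and $Av = \lim A x_n / \|x_n\| = \lim b / \|x_n\| = 0$. Then for every $t \geq 0$, the point $x_0 + t v$ satisfies $A(x_0 + tv) = b_0$ and $x_0 + tv \geq 0$, so $x_0 + tv \in P(b_0)$; letting $t \to \infty$ contradicts boundedness of $P(b_0)$. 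This is completely self-contained and the computation is routine.

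There is no real obstacle here — the only thing to be slightly careful about is the convention that the empty polytope counts as bounded (so that the conclusion ``$P(b)$ is bounded for all $b$'' is literally true, since $P(b)$ may be empty for many $b$), and that the hypothesis only asserts feasibility of $P(b_0)$ for a single $b_0$, which is exactly what is needed to produce the point $x_0$ in the contradiction argument.
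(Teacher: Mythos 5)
Your proof is correct and takes essentially the same route as the paper, which also argues via the recession cone $\{x : Ax = 0,\ x \geq 0\}$ being independent of $b$ and trivial by boundedness of $P(b_0)$. The sequential contradiction argument you sketch as an alternative is just an unpacking of the same recession-cone fact.
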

A sufficient condition for boundedness is positivity of the constraint matrix.

\begin{lem}[Boundedness]\label{lem:boundedness2}
Suppose that $A$ has nonnegative entries and no column of $A$ is $0\in\R^m$. Then the polytope $P(b)$ is bounded (possibly nonfeasible) for all $b\in\R^m$.
\end{lem}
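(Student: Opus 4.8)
The plan is to argue by contraposition: if $P(b)$ were unbounded for some $b\in\R^m$, then it would contain a recession direction, i.e.\ a nonzero $y\in\R^d$ with $Ay=0$ and $y\geq 0$. Indeed, recall the standard fact that a nonempty polyhedron of the form $\{x\mid Ax=b,\,x\geq0\}$ is unbounded if and only if its recession cone $\{y\mid Ay=0,\,y\geq0\}$ contains a nonzero vector (see, e.g., \cite{bertsimas1997introduction}); if $P(b)=\emptyset$ there is nothing to prove since the empty set is bounded. So assume such a $y$ exists.

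The key step is then to derive a contradiction from the nonnegativity hypothesis on $A$ together with the assumption that no column of $A$ is the zero vector. Since $y\geq 0$ and $y\neq 0$, there is an index $j$ with $y_j>0$. Writing $Ay=\sum_{i=1}^d y_i A_{\{i\}}$ where $A_{\{i\}}$ denotes the $i$-th column of $A$, and using that every entry of every $A_{\{i\}}$ is nonnegative and every $y_i\geq0$, each summand $y_i A_{\{i\}}$ is a vector with nonnegative entries. Hence $Ay$ has nonnegative entries, and moreover $Ay\geq y_j A_{\{j\}}$ coordinatewise. Since $A_{\{j\}}\neq 0$ and has nonnegative entries, it has at least one strictly positive coordinate, say the $\ell$-th; therefore $(Ay)_\ell \geq y_j (A_{\{j\}})_\ell > 0$, contradicting $Ay=0$. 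This shows no nonzero recession direction can exist, so $P(b)$ is bounded for every $b\in\R^m$.

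I do not anticipate any serious obstacle here; the only point requiring a little care is the characterization of unboundedness of $P(b)$ via its recession cone, which is classical for polyhedra but should be invoked explicitly (alternatively, one can extract the direction $y$ directly: if $x^{(k)}\in P(b)$ with $\|x^{(k)}\|\to\infty$, pass to a subsequence so that $x^{(k)}/\|x^{(k)}\|\to y$; then $\|y\|=1$, $y\geq0$, and $Ay=\lim A x^{(k)}/\|x^{(k)}\| = \lim b/\|x^{(k)}\| = 0$). The rest is the short sign argument above. Note this also immediately recovers the relevant special case of Lemma \ref{lem:boundedness1}, though that lemma is more general and would be handled separately.
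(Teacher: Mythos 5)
Your argument is correct, but it takes a different (and slightly longer) route than the paper. The paper bounds each coordinate directly: for each column index $i$, pick a row $j$ with $A_{ji}>0$; since all entries of $A$ and all coordinates of $x$ are nonnegative, the $j$-th equality constraint $\sum_k A_{jk}x_k=b_j$ immediately yields $A_{ji}x_i\le b_j$, hence $x_i\le b_j/A_{ji}$, a fixed bound independent of $x\in P(b)$. You instead argue by contraposition via the recession cone: unboundedness of a nonempty polyhedron forces a nonzero recession direction $y\ge 0$ with $Ay=0$, and the same sign considerations rule such a $y$ out. Both proofs are sound and rest on the identical sign observation; the paper's version is a touch more elementary (no appeal to the recession-cone characterization, and no need to treat the empty case separately) and is quantitative, producing the explicit coordinatewise bound $x_i\le b_j/A_{ji}$. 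Your version has the small advantage of making visible the link to the recession cone $\{y\mid Ay=0,\,y\ge0\}=\{0\}$, which is the object actually used in the proof of Lemma \ref{lem:boundedness1}, so the two lemmas are seen to share a common mechanism. Your alternative ``extract a limiting direction $y=\lim x^{(k)}/\|x^{(k)}\|$'' device is also fine and avoids citing the recession-cone theorem.
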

Feasibility for $P(b)$ is a more delicate question and usually fails to hold uniformly in $b$. However, we have the following statements that suffice for our purposes.

\begin{lem}[Feasibility]\label{lem:feasibility}
Consider the polytope $P(b_0)=\{x\in\mathbb{R}^m\,\vert\,Ax=b_0,x\geq0\}$. Then $P(b)$ is feasible for all $b$ sufficiently close to $b_0$ if either of the following two conditions holds:
\begin{itemize}
\item [(i)] The polytope $P(b_0)$ admits a nondegenerate basic solution.
\item [(ii)] Slater's constraint qualification holds.
\end{itemize}
\end{lem}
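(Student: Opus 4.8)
The plan is to handle both parts with a single mechanism: exhibit an explicit feasible point of $P(b_0)$ whose binding nonnegativity constraints remain satisfied after a small perturbation of the right-hand side, where that perturbed point is chosen to depend continuously on $b$. The two hypotheses differ only in how one obtains a point that is ``strictly interior on the active coordinates''.

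For part (i), suppose $x(I,b_0)$ is a nondegenerate basic feasible solution for some basis $I$. Nondegeneracy forces all $m$ coordinates of $(A_I)^{-1}b_0$ to be nonzero, hence strictly positive by feasibility. Since $b\mapsto x(I,b)=\Aug_I\!\left[(A_I)^{-1}b\right]$ is linear (thus continuous) and the non-basic coordinates stay identically zero for every $b$, only the $m$ basic coordinates need attention; as strict positivity is an open condition, there is $\epsilon>0$ such that $(A_I)^{-1}b>0$, and therefore $x(I,b)\ge 0$, whenever $\|b-b_0\|<\epsilon$. Hence $x(I,b)\in P(b)$ and $P(b)\neq\emptyset$.

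For part (ii), pick $x_0\in(0,\infty)^d$ with $Ax_0=b_0$, which exists by Slater's constraint qualification. Since $A$ has full row rank $m$, it admits a right inverse $A^{+}\in\R^{d\times m}$, e.g.\ $A^{+}=A^T(AA^T)^{-1}$. Define $x(b)\coloneqq x_0+A^{+}(b-b_0)$; then $Ax(b)=b$ by construction, the map $b\mapsto x(b)$ is continuous, and $x(b_0)=x_0>0$. Because $\{x\in\R^d:x>0\}$ is open and contains $x_0$, there is $\epsilon>0$ with $x(b)>0$, hence $x(b)\in P(b)$, for all $\|b-b_0\|<\epsilon$; in particular $P(b)\neq\emptyset$.

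I do not anticipate a genuine obstacle: the entire content is that strict positivity is preserved under small perturbations of a continuously varying point, together with the fact that full rank of $A$ lets one lift perturbations of $b$ to perturbations of $x$. The only bookkeeping point worth stating carefully is, in case (i), that the augmented coordinates of $x(I,b)$ are exactly zero for every $b$, so nonnegativity of $x(I,b)$ reduces to the strict inequality $(A_I)^{-1}b_0>0$ furnished by nondegeneracy; and in case (ii), that one should record the explicit form of $A^{+}$ (or merely invoke surjectivity of $A$) so that the lift $x(b)$ is well defined.
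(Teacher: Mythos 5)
Your part (i) is essentially the paper's own argument: strict positivity of the basic coordinates of $x(I,b_0)$ is an open condition preserved under the continuous (linear) map $b\mapsto x(I,b)$, and the non-basic coordinates stay at zero by construction.

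Your part (ii) is correct but takes a genuinely different route. The paper invokes the resolution theorem for polyhedra (Bertsimas, Theorem 4.15) to write the Slater point $\bar x>0$ as a convex combination $\sum_k\lambda_k x(I_k,b_0)$ of basic feasible solutions plus a conic combination $\sum_r\beta_r w_r$ of recession-cone generators, then perturbs $b$ inside the basic-solution terms (the $w_r$ are independent of $b$) and uses strict positivity of $\bar x$ to keep the perturbed combination nonnegative. You instead bypass the polyhedral decomposition entirely: since $A$ is surjective, a right inverse $A^+=A^T(AA^T)^{-1}$ lets you lift the perturbation of $b$ directly onto the Slater point via $x(b)=x_0+A^+(b-b_0)$, which is affine, satisfies $Ax(b)=b$, equals $x_0>0$ at $b_0$, and thus stays strictly positive for $b$ near $b_0$. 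Your argument is more elementary and self-contained, needing only full row rank of $A$ and openness of the positive orthant; the paper's version is phrased in the basis language that pervades the rest of the paper, which perhaps motivates their choice, but buys nothing essential here. Both are sound.
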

Let us finally give a sufficient condition for assumption \eqref{ass:feasforn}. We obviously have that 
\begin{equation*}
\PR\left(x^{\star}(b_n) \textrm{ exists}\right)\geq \PR\left( P(b_n) \text{ feasible \& bounded}\right).
\end{equation*}
If assumption \eqref{ass:cltforb} holds, we have that $b_n$ converges in probability to $b$. Consequently, the latter event in the above display has asymptotically probability one if $P(b)$ is feasible, bounded and fulfills Slater's condition. These conditions are satisfied for many linear programs, such as the optimal transport problem considered in more detail in Section \ref{sec:OT}.\\
\textbf{Uniqueness assumption.} The uniqueness assumption \eqref{ass:uniquexb} is obviously necessary for our distributional limit stated in Theorem \ref{thm:limitGeneral}. Further, recall Theorem \ref{prop:slackness} for primal uniqueness statements in terms of dual nondegeneracy. From a probabilistic point of view uniqueness is satisfied for almost every cost.

\begin{lem}\label{lem:uniqueness}
Let $P=\{Ax=b,x\ge0\}$ and let $C$ be a $d$-dimensional random vector with distribution absolutely continuous with respect to Lebesgue measure on $\mathbb{R}^d$. Then almost surely the standard linear program $\min_{x\in P} C^Tx$ either admits a unique optimal solution or no solution at all.
\end{lem}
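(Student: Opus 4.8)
\textbf{Proof proposal for Lemma \ref{lem:uniqueness}.}

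The plan is to reduce the claim to a statement about finitely many lower-dimensional hyperplanes in cost space and invoke the fact that each such hyperplane is Lebesgue-null. First I would dispose of the trivial case: if $P=\emptyset$ there is never a solution and the statement holds vacuously, so assume $P\neq\emptyset$. If $P$ is unbounded in some direction along which the objective decreases, there is no optimal solution; this is a property of the pair $(C,P)$, but the point is that on the event where an optimum \emph{does} exist, Theorem \ref{thm:optFeas} guarantees it is attained at some primal basic feasible solution, hence at one of the finitely many vertices of $P$. Enumerate the (finitely many) bases $I\subseteq[d]$ that are primal feasible for the fixed right-hand side $b$, giving vertices $v_1,\dots,v_L:=x(I_1,b),\dots,x(I_L,b)$ (with repetitions allowed under degeneracy, which is harmless). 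Multiple optimal solutions occur only if the optimal face of $P$ has dimension at least one, and in particular only if there exist two \emph{distinct} vertices $v_j\neq v_k$ that are both optimal, i.e.\ with $C^Tv_j=C^Tv_k$.

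Thus the bad event is contained in
\begin{equation*}
\bigcup_{\substack{1\le j<k\le L\\ v_j\neq v_k}}\left\{c\in\R^d \;\middle|\; c^T(v_j-v_k)=0\right\},
\end{equation*}
a finite union of hyperplanes $\{c : c^T(v_j-v_k)=0\}$, each of which is a proper linear subspace of $\R^d$ since $v_j-v_k\neq0$. Each such hyperplane has Lebesgue measure zero, so the union does too, and therefore $\PR(C$ lies in the bad set$)=0$ by absolute continuity of the law of $C$ with respect to Lebesgue measure. Hence almost surely, for every pair of distinct vertices $v_j\neq v_k$ we have $C^Tv_j\neq C^Tv_k$; in particular the minimum of the linear functional $x\mapsto C^Tx$ over the vertex set is attained at a unique vertex, so by the fundamental theorem of linear programming (Theorem \ref{thm:optFeas}) the optimal solution, when it exists, is unique.

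The only subtlety — and the step I would be most careful about — is the claim that non-uniqueness of the optimal solution forces the existence of \emph{two distinct optimal vertices}. If the optimum is attained on a face $F$ of $P$ with $\dim F\ge1$, then $F$ contains at least two vertices of $P$ (every nonempty face of a polytope is itself a polytope with at least one vertex, and a positive-dimensional polytope has at least two), both of which are optimal; conversely if the optimal face is a single point then the optimum is unique. This uses that $P$, being a feasibility set $\{Ax=b,\;x\ge0\}$, is a pointed polyhedron (it contains no line, since $x\ge0$), so all its faces have vertices. I would spell this out, perhaps noting that the optimal set $OPT(b)$ is itself of the form $\{x:Ax=b,\,x\ge0,\,c^Tx=c(b)\}$, a face of $P$, and then apply the vertex-enumeration argument above. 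Everything else is routine.
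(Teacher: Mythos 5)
Your route is genuinely different from the paper's: you argue in the primal by showing that almost surely no two distinct vertices of $P$ tie in objective value, whereas the paper goes through the dual, showing that almost surely every dual basic feasible solution is nondegenerate and then invoking Proposition~\ref{prop:slackness}(i). However, as written your argument has a gap exactly at the step you flagged. Non-uniqueness of the optimum does \emph{not} force two distinct optimal vertices: the fact that ``a positive-dimensional polytope has at least two vertices'' is true for polytopes but fails for unbounded polyhedra, and nothing in the lemma assumes $P$ bounded. An unbounded one-dimensional face (a ray) of a pointed polyhedron has exactly one vertex. Concretely, with $d=2$, $m=1$, $A=(0,1)$, $b=1$, the set $P=\{x\in\R^2 : x_2=1,\ x\ge0\}$ is a ray with single vertex $(0,1)$; your union over distinct vertex pairs is then empty, yet for every $C$ with $C_1=0$ the objective is constant on $P$, so the optimum exists and is attained along the entire ray. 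Thus the claimed containment of the ``bad'' event in a finite union of hyperplanes of the form $\{c : c^T(v_j-v_k)=0\}$ fails; the lemma's conclusion still holds in this example (it is a null set), but for a reason your argument does not capture.

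The fix is cheap and stays within your framework: enlarge the exceptional null set to also include the hyperplanes $\{c : c^Tr=0\}$ for each of the finitely many extreme rays $r$ of the recession cone $\{r : Ar=0,\ r\ge0\}$ (this cone is pointed since it sits in the nonnegative orthant, so its extreme rays are nonzero and each such set is a proper hyperplane, hence Lebesgue-null). Outside the enlarged null set, if an optimum exists then $c^Tr>0$ for every nonzero recession direction of $P$: a strictly negative sign would make the objective unbounded below, and zero has been excluded. Consequently the optimal face $OPT(b)=\{x\in P : c^Tx=c(b)\}$ has trivial recession cone, hence is a bounded face, i.e.\ a polytope, and your two-vertex argument now applies. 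With that amendment your primal proof is correct and is a nice elementary alternative to the paper's dual-degeneracy argument, which sidesteps the boundedness issue automatically because it never needs to discuss the geometry of the primal feasible set.
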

In fact, the proof for Lemma \ref{lem:uniqueness} shows that all dual basic feasible solutions are nondegenerate almost surely. In particular, assumption \eqref{ass:dualnondegen} holds almost surely. We further discuss uniqueness for optimal transport problems in Section \ref{sec:OT}.\\
\textbf{Duality assumption.} Lemma \ref{lem:dualnondegenerate} proves that in presence of \eqref{ass:optBounded} the assumption \eqref{ass:dualnondegen} is equivalent to nondegeneracy of all optimal dual solutions. Thus when these assumptions hold, we have
\begin{equation*}
\lim_{n\to\infty}\PR(\{x^{\star}(b_n) \textit{ is unique}\}) = 1.
\end{equation*}
Under some conditions on the limiting distribution $G$ we can also establish a converse result.
\begin{lem}\label{lem:primalNotUnique}
Suppose that the limit distribution $G$ has a positive density in a neighborhood of the origin\footnote{More generally, it suffices to assume that the origin is a Lebesgue point of the support of $G$.}. If \eqref{ass:dualnondegen} does not hold, then
\begin{equation*}
\liminf_{n\to \infty} \PR\left(x^{\star}(b_n) \text{ is non-unique}\right)>0.
\end{equation*}
\end{lem}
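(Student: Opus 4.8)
\textbf{Plan of proof for Lemma \ref{lem:primalNotUnique}.}

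The plan is to exhibit a positive-probability event on which the empirical primal program $(\text{P}_{b_n})$ has more than one optimal basic solution, hence more than one optimal solution. Since \eqref{ass:dualnondegen} fails, Lemma \ref{lem:dualnondegenerate} gives a dual optimal basic solution that is degenerate, or equivalently there are two distinct bases $I_j, I_k$ with $j,k\le K$ and $\lambda(I_j)=\lambda(I_k)$ (using the enumeration from Remark \ref{rem:basessplitting}). I would first observe that, because $\lambda(I_j)=\lambda(I_k)$, both $\lambda(I_j)$ and $\lambda(I_k)$ remain \emph{dual optimal} for $(\text{D}_{b+\eta v})$ for \emph{every} perturbation direction $v$ and every $\eta$: indeed $b^T\lambda(I_j)=b^T\lambda(I_k)$ and $(b+\eta v)^T\lambda(I_j)=(b+\eta v)^T\lambda(I_k)$ trivially, and dual feasibility of $\lambda(I_j),\lambda(I_k)$ does not depend on the right-hand side. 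Consequently, on the event $A_n^\K$ (in the notation of Section \ref{sec:Proofs}) with $\{j,k\}\subseteq\K$ — i.e. the event that \emph{both} $x(I_j,b_n)\ge0$ and $x(I_k,b_n)\ge0$ — strong duality (Theorem \ref{thm:strongduality}) forces both $x(I_j,b_n)$ and $x(I_k,b_n)$ to be primal optimal basic solutions for $(\text{P}_{b_n})$, and since $A_{I_j}^{-1}b_n$ and $A_{I_k}^{-1}b_n$ differ whenever $I_j\ne I_k$ and $b_n$ is generic, the two basic solutions are distinct, so $OPT(b_n)$ is not a singleton.

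It then remains to show that $\PR(x(I_j,b_n)\ge0\text{ and }x(I_k,b_n)\ge0)$ is bounded below by a positive constant for all large $n$. Writing $x(I_j,b_n)=x(I_j,b)+x(I_j,G_n)/r_n$ (and likewise for $k$), and recalling $x(I_j,b)=x(I_k,b)=x^\star(b)\ge0$, the nonnegativity constraints split exactly as in Section \ref{subsubsec:convexcones}: the coordinates in $\Pos[x^\star(b)]$ are strictly positive and satisfied with high probability by consistency of $b_n$, whereas the remaining coordinates reduce to the cone membership events $\{G_n\in H_j\}$ and $\{G_n\in H_k\}$ for the closed convex cones $H_j,H_k$ of \eqref{eq:Hk}. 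So it suffices that $\PR(G_n\in H_j\cap H_k)$ does not vanish. By the Portmanteau theorem and absolute continuity, $\liminf_n \PR(G_n\in H_j\cap H_k)\ge \PR(G\in \inte(H_j\cap H_k))$, so I need $H_j\cap H_k$ to have nonempty interior and to meet the support of $G$. For the interior: the computation in the proof of Theorem \ref{thm:limit} shows $H_{k_0}\supseteq$ an open neighborhood of $A_{I_{k_0}}\mathbf 1_m$ (the direction $v$ for which $A_{I_{k_0}}^{-1}(b+\eta v)>0$ for small $\eta>0$, which exists since $x^\star(b)\ge0$); I would argue more carefully that $H_j$ and $H_k$ in fact share an interior point. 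Here the key is that $\lambda(I_j)=\lambda(I_k)$: the \emph{objective values} agree, $c^Tx(I_j,b+\eta v)=\lambda(I_j)^T(b+\eta v)=\lambda(I_k)^T(b+\eta v)=c^Tx(I_k,b+\eta v)$, for all $v$, so for $v$ in a full-dimensional cone of directions the perturbed program $(\text{P}_{b+\eta v})$ attains its (common) optimal value at \emph{both} bases; a compactness/continuity argument then produces a common interior direction $v_0$, i.e. $v_0\in\inte H_j\cap\inte H_k=\inte(H_j\cap H_k)\ne\emptyset$. Finally, since the density of $G$ is positive in a neighborhood of the origin and $H_j\cap H_k$ is a cone through the origin with nonempty interior, $\PR(G\in\inte(H_j\cap H_k))>0$. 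Combining the two halves, $\liminf_n\PR(x^\star(b_n)\text{ non-unique})\ge\liminf_n\PR(A_n^{\{j,k\}}\text{-type event})\ge\PR(G\in\inte(H_j\cap H_k))>0$.

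\textbf{Main obstacle.} The delicate point is the penultimate step: producing a direction $v_0$ that lies in the \emph{interior} of \emph{both} cones $H_j$ and $H_k$ simultaneously. Each $H_{k_0}$ individually has nonempty interior because $x^\star(b)\ge0$, but a priori $H_j$ and $H_k$ could intersect only in their common boundary. The resolution must use the hypothesis $\lambda(I_j)=\lambda(I_k)$ in an essential way — it guarantees the two bases are "tied" as optimal dual solutions under \emph{every} perturbation, not just along a lower-dimensional set — so that the regions of directions along which each basis stays primal-optimal overlap in full dimension. I expect the cleanest route is to work with the perturbed dual programs $(\text{D}_{b+\eta v})$: $\lambda(I_j)$ and $\lambda(I_k)$ are both optimal there for all $v$, and $H_{k_0}$ is precisely the set of $v$ for which the \emph{primal} optimum of $(\text{P}_{b+\eta v})$ is attained at $x(I_{k_0},\cdot)$ for small $\eta$; a genericity argument (the set of $v$ for which some \emph{other} basis strictly beats $I_j$ is open but its complement, where $I_j$ ties, still contains an open set around $v_0$) then closes the gap. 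Everything else — the splitting of the nonnegativity constraints, the continuous-mapping/consistency arguments for the $\Pos$ coordinates, and the Portmanteau lower bound — is routine and parallels Section \ref{sec:Proofs}.
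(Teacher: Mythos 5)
Your approach diverges substantially from the paper's, and the obstacle you flag at the end is a genuine, unresolved gap rather than a routine technicality. You propose to exhibit a positive-probability event on which two \emph{specific} bases $I_j,I_k$ (with $\lambda(I_j)=\lambda(I_k)=\lambda^\star$) are \emph{simultaneously} primal-feasible, which requires $\inte(H_j\cap H_k)\ne\emptyset$. This is false in general for an arbitrary such pair. A small example: take $m=3$, $L=\{1,2,3,4\}$ the tight index set of $\lambda^\star$, with $a_1=e_1,a_2=e_2,a_3=e_3,a_4=e_1+e_2+e_3$ and $b=e_1$, so $x^\star(b)=e_1$ is the unique optimum (pad with extra columns carrying slack cost to make $P(b_n)$ feasible near $b$). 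Then $\{1,2,4\}$ and $\{1,3,4\}$ are both primal-and-dual optimal with $\lambda(\{1,2,4\})=\lambda(\{1,3,4\})=\lambda^\star$, yet $H_{\{1,2,4\}}=\{v_3\ge0,\ v_2\ge v_3\}$ and $H_{\{1,3,4\}}=\{v_2\ge0,\ v_3\ge v_2\}$, whose intersection is the half-plane $\{v_2=v_3\ge0\}$ with empty interior. Your Portmanteau lower bound then yields zero for that pair. (The other pair $\{1,2,3\},\{1,2,4\}$ does work — so \emph{some} pair may always work, but your ``genericity'' heuristic does not establish which, and in fact establishing it seems to require something like the paper's set $H$ together with a Baire-type finite-union argument.) Relatedly, your opening observation that $\lambda(I_j),\lambda(I_k)$ ``remain dual optimal for $(\text{D}_{b+\eta v})$ for every perturbation direction $v$'' is incorrect: they remain dual \emph{feasible} with equal objective for all $v$, but optimality over the dual feasible polytope is a directional condition on $v$; the confusion feeds directly into the vague final step.

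The paper's proof sidesteps this entirely by never asking two specific basic solutions to be jointly feasible. It defines $H$ to be the set of directions on which $\lambda^\star$ \emph{strictly} dominates every other dual basic feasible solution, intersected with the interior of the column cone $\{Ax:x>0\}$, so that $H$ is open, nonempty, and conic; by the positive-density hypothesis, $\PR(G\in H_\epsilon)>0$ and hence $\liminf_n\PR(G_n\in H_\epsilon)>0$. On the event $\{G_n\in H_\epsilon\}\cap\{x^\star(b_n)\text{ exists}\}$, the dual $(\text{D}_{b_n})$ has a \emph{unique} optimal solution $\lambda^\star$, which is degenerate; Proposition~\ref{prop:slackness}(iii) then yields that $(\text{P}_{b_n})$ has multiple solutions, with no need to identify which primal bases realize the multiplicity. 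This route buys exactly what your argument is missing: it converts the combinatorial uncertainty about which pair of bases is active into a clean, pair-free statement via strict complementary slackness. If you want to repair your version, the cleanest fix is to first establish the paper's $H$ is open nonempty, observe that $H\subseteq\bigcup_{j\ne k,\,\lambda(I_j)=\lambda(I_k)}(H_j\cap H_k)$ (via Proposition~\ref{prop:slackness}(iii)), and conclude by Baire that some member of that finite union has nonempty interior — but at that point you have essentially re-derived the paper's argument.
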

\textbf{Joint convergence.} Our goal here is to state useful conditions such that the random vector $\left(\alpha_n^\K, G_n\right)$\footnote{Recall that the $\alpha_n^\K$ represent random weights (summing up to one) for each optimal basis $I_k$, $k\in\K$ for the case that $A_n^\K$ occurs, i.e., that several bases yield primal optimal solutions and hence any convex combination is also optimal.} jointly converges in distribution to some limit random variable $\left(\alpha^\K,G\right)$ on the space $ \Delta_{\vert \K\vert}\times \mathbb{R}^m$. By assumption \eqref{ass:cltforb}, $G_n\to G$ in disribution, and a necessary condition for the joint distributional convergence of $(\alpha_n^\K,G_n)$ is that $\alpha_n^\K$ has a distributional limit $\alpha^\K$.  There is no reason to expect $\alpha_n^\K$ and $G_n$ to be independent, as discussed at the end of this section.  We give a weaker condition than independence that is formulated in terms of the conditional distribution of $\alpha_n^\K$ given $G_n$ (or, equivalently, given $b_n=b+G_n/r_n$).  These conditions are natural in the sense that if $b_n=g$, then the choice of solution $x^\star(g)$, as encapsulated by the $\alpha_n^\K$'s, is determined by the specific linear program solver in use.\\
Treating conditional distributions rigorously requires some care and machinery.  Let $\Z=\Z^\K=\Delta_{|\K|}\times \R^m$ and for $\varphi:\Z\to \R$ denote
\[
\|\varphi\|_\infty = \sup_{z}|\varphi(z)|,
\qquad \|\varphi\|_{\Lip}=\sup_{z_1\ne z_2}\frac{|\varphi(z_1)-\varphi(z_2)}{\|z_1-z_2\|},
\qquad \|\varphi\|_{\BL}=\|\varphi\|_\infty + \|\varphi\|_{\Lip}.
\]
We say that $\varphi$ is bounded Lipschitz if it belongs to $\BL(\Z)=\{\varphi:\Z\to\R\,\vert \,\|\varphi\|_{\BL}\le1\}$. The bounded Lipschitz metric
\begin{equation}\label{eq:BLnorm}
\BL(\mu_1,\mu_2)
\coloneqq\sup_{\varphi\in\BL(\Z)}\left|\int_\Z \varphi(z)\, d\left(\mu_1 - \mu_2\right)(z)\right|
\end{equation}
is well-known to metrize convergence in distribution of (probability) measures on $\Z$ \cite[Theorem 11.3.3]{dudley2002real}. According to the disintegration theorem (see \citet[Theorem 5.4]{kallenberg1997foundations}, \citet[Section 10.2]{dudley2002real} or \cite{chang1997conditioning} for details), we may write the joint distribution of $\left(\alpha_n^\K,b_n\right)$ as an integral of conditional distributions $\mu_{n,g}^\K$ that represent the distribution of $\alpha_n^\K$ given that $b_n=g$.  More precisely, $g\mapsto \mu_{n,g}^\K$ is measurable from $\R^m$ to the metric space of probability measures on $\Delta_{|\K|}$ with the bounded Lipschitz metric, so that for any $\varphi\in \BL(\Z)$ it holds that
\begin{equation*}
\E\varphi(\alpha_n^\K,b_n)
=\E\psi_n(b_n),
\qquad \psi_n(g)=\int_{\Delta_{|\K|}}\varphi(\alpha,g)d\mu^\K_{n,g}(\alpha),
\end{equation*}
where $\psi_n:\R^m\to\R$ is a measurable function. The joint distribution of $(\alpha_n^\K,G_n)$ is determined by the collection of expectations
\begin{equation*}
\E \varphi(\alpha_n^\K,G_n)
=\E\psi_n(G_n)
=\E\psi_n\left(r_n(b_n-b)\right)
,\qquad \varphi\in \BL(\Z).
\end{equation*}
Our sufficient condition for joint convergence is given by the following lemma.  It is noteworthy that the spaces $\R^m$ and $\Delta_{|\K|}$ can be replaced with arbitrary Polish spaces, and even more general spaces, as long as the disintegration theorem is valid.
\begin{lem}\label{lem:jointConv}
Let $\{\mu^\K_g\}_{g\in\R^m}$ be a collection of probability measures on $\Delta_{|\K|}$ such that the map $g\mapsto \mu^\K_g$ is continuous at $G$-almost any $g\in\R^m$, and suppose that $\mu^\K_{n,g}\to\mu^\K_g$ uniformly with respect to the bounded Lipschitz metric $\BL$. Then $(\alpha_n^\K,G_n)$ converges in distribution to a random vector $(\alpha^\K,G)$ satisfying
\begin{equation*}
\E \varphi(\alpha^\K,G)
=\E_{G}\int_{\Delta_{|\K|}}\varphi(\alpha,G)d\mu_{G}^\K(\alpha)
:=\E \psi(G)
\end{equation*}
for any continuous bounded function $\varphi\in\BL(\Z)$ (this determines the distribution for the random vector $(\alpha^\K,G)$ completely). Moreover, if $\mathcal L$ denotes the distribution of a random vector, then the rate of convergence can be quantified as
\[
\BL(\mathcal L[(\alpha_n^\K,G_n)],\mathcal L[(\alpha^\K,G)])
\le \sup_g \BL(\mu^\K_{n,g},\mu^\K_g)+(1+L)\BL(\mathcal L[G_n],\mathcal L[G]),
\]
where $L\coloneqq\sup_{g_1\ne g_2}\BL(\mu^\K_{g_1},\mu^\K_{g_2})/\|g_1-g_2\|\in[0,\infty]$. The supremum with respect to $g$ can be replaced by an essential supremum.
\end{lem}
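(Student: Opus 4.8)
The plan is to argue entirely through the bounded Lipschitz metric, which metrizes weak convergence on the Polish space $\Z=\Delta_{|\K|}\times\R^m$ \citep[Theorem 11.3.3]{dudley2002real}, and to prove the quantitative estimate directly; convergence in distribution then follows. First I would pin down the limit object. Each map $g\mapsto\mu^\K_{n,g}$ is measurable by the disintegration theorem, and $\mu^\K_{n,g}\to\mu^\K_g$ uniformly in $g$, so $g\mapsto\mu^\K_g$ is a (uniform, hence pointwise) limit of measurable maps and is therefore itself measurable into the space of probability measures on $\Delta_{|\K|}$. Integrating this Markov kernel against the law $\mathcal L[G]$ then produces a bona fide probability measure on $\Z$, which I declare to be $\mathcal L[(\alpha^\K,G)]$; by construction $\E\varphi(\alpha^\K,G)=\E_G\int_{\Delta_{|\K|}}\varphi(\alpha,G)\,d\mu^\K_G(\alpha)=:\E\psi(G)$ for every bounded measurable $\varphi$, which is the asserted formula (and it determines the law, since bounded Lipschitz functions form a convergence-determining class). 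It then remains to show $\E\varphi(\alpha_n^\K,G_n)\to\E\varphi(\alpha^\K,G)$, with the stated rate, uniformly over $\varphi\in\BL(\Z)$.

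Fix $\varphi\in\BL(\Z)$ and set $\psi_n(g)=\int_{\Delta_{|\K|}}\varphi(\alpha,g)\,d\mu^\K_{n,g}(\alpha)$, so that $\E\varphi(\alpha_n^\K,G_n)=\E\psi_n(G_n)$. I would split
\[
\E\varphi(\alpha_n^\K,G_n)-\E\varphi(\alpha^\K,G)
=\E\bigl[\psi_n(G_n)-\psi(G_n)\bigr]+\bigl[\E\psi(G_n)-\E\psi(G)\bigr].
\]
The elementary but crucial observation is that, for any fixed $g$, the fiber function $\alpha\mapsto\varphi(\alpha,g)$ belongs to $\BL(\Delta_{|\K|})$: one has $\|\varphi(\cdot,g)\|_\infty\le\|\varphi\|_\infty$ and $\|\varphi(\cdot,g)\|_{\Lip}\le\|\varphi\|_{\Lip}$ because changing only the $\alpha$-coordinate does not increase distance in $\Z$, whence $\|\varphi(\cdot,g)\|_{\BL}\le\|\varphi\|_{\BL}\le1$. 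Consequently $|\psi_n(g)-\psi(g)|\le\BL(\mu^\K_{n,g},\mu^\K_g)$, so the first summand is bounded in absolute value by $\E\,\BL(\mu^\K_{n,G_n},\mu^\K_{G_n})\le\sup_g\BL(\mu^\K_{n,g},\mu^\K_g)$, which vanishes by hypothesis; since $\psi_n-\psi$ is only ever evaluated at $G_n$, this supremum may be taken as an essential supremum with respect to the law of $G_n$.

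For the second summand I would treat $\psi$ as a test function integrated against $\mathcal L[G_n]$ and $\mathcal L[G]$. When $L<\infty$ the kernel $g\mapsto\mu^\K_g$ is globally $L$-Lipschitz for $\BL$ (in particular continuous everywhere); writing $\psi(g_1)-\psi(g_2)$ as a change-of-integrand term plus a change-of-measure term and bounding the former by $\|\varphi\|_{\Lip}\|g_1-g_2\|$ (using that $\varphi$ is Lipschitz uniformly in $\alpha$) and the latter by $\BL(\mu^\K_{g_1},\mu^\K_{g_2})\le L\|g_1-g_2\|$ (using again $\varphi(\cdot,g_2)\in\BL(\Delta_{|\K|})$) gives $\|\psi\|_{\Lip}\le\|\varphi\|_{\Lip}+L$; together with $\|\psi\|_\infty\le\|\varphi\|_\infty$ this yields $\|\psi\|_{\BL}\le\|\varphi\|_{\BL}+L\le1+L$, hence $|\E\psi(G_n)-\E\psi(G)|\le(1+L)\,\BL(\mathcal L[G_n],\mathcal L[G])$. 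Summing the two bounds and taking the supremum over $\varphi\in\BL(\Z)$ gives exactly the displayed rate. If $L=\infty$ the stated bound is vacuous, and convergence in distribution is instead recovered by checking that $\psi$ is continuous at every continuity point $g$ of $g\mapsto\mu^\K_g$: for $g_j\to g$ one has $\varphi(\cdot,g_j)\to\varphi(\cdot,g)$ uniformly (Lipschitz in the second argument) and $\mu^\K_{g_j}\to\mu^\K_g$ weakly, so $\psi(g_j)\to\psi(g)$; hence $\psi$ is bounded and $G$-a.s.\ continuous, and $\E\psi(G_n)\to\E\psi(G)$ follows from $G_n\xrightarrow[]{D}G$ by the continuous mapping theorem.

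I expect the main obstacle to be measure-theoretic hygiene rather than the estimates: ensuring that the limiting kernel $g\mapsto\mu^\K_g$, the function $\psi$, and the candidate limit law are genuinely well defined and (jointly) measurable, and that $\psi$ really inherits $G$-a.s.\ continuity from the a.e.\ continuity hypothesis on the kernel. Once the fiberwise bounded-Lipschitz observation is in place, the remaining inequalities are routine.
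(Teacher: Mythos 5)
Your proof is correct and follows essentially the same route as the paper: the same decomposition into $\E\bigl[\psi_n(G_n)-\psi(G_n)\bigr]$ plus $\E\psi(G_n)-\E\psi(G)$, the same fiberwise bounded-Lipschitz observation $|\psi_n(g)-\psi(g)|\le\BL(\mu^\K_{n,g},\mu^\K_g)$, the same estimate $\|\psi\|_{\BL}\le 1+L$ yielding the $(1+L)$ factor, and the same appeal to $G$-a.s.\ continuity of $\psi$ (via the continuous mapping theorem) when $L=\infty$. The extra measure-theoretic care you take in verifying measurability of $g\mapsto\mu^\K_g$ and constructing the limiting law $\mathcal{L}[(\alpha^\K,G)]$ is a welcome clarification that the paper leaves implicit.
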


The conditions of Lemma~\ref{lem:jointConv} (and hence the joint convergence in Theorem~\ref{thm:limitGeneral}) will be satisfied in many practical situations.  For example, given $b_n$ and an initial basis for the simplex method, its output is determined by the pivoting rule (for a general overview see \cite{terlaky1993pivot} and references therein). Deterministic pivoting rules lead to degenerate conditional distributions of $\alpha_n^\K$ given $b_n=g$, whereas random pivoting rules may lead to nondegenerate conditional distributions. In both cases these conditional distributions do not depend on $n$ at all, but only on the input vector $g$. In particular, the uniform convergence in Lemma \ref{lem:jointConv} is trivially fulfilled (the supremum is equal to zero). It is reasonable to assume that these conditional distributions depend continuously on $g$ except for some boundary values that are contained in a lower-dimensional space (which will have measure zero under the absolutely continuous random vector $G$).

\section{Optimal Transport}\label{sec:OT}
In this section, we focus on the optimal transport problem (see \cite{villani2008optimal,peyre2019computational,panaretos2020invitation} for further details) on finite spaces, and consider a space $\mathcal{X}=\{x_1,\ldots,x_N\}$ equipped with some underlying cost $c\colon \mathcal{X}\times\mathcal{X}\to \mathbb{R}^{N^2}$ usually represented as a matrix $c\in\mathbb{R}^{N\times N}$ with entries $c_{ij}=c(x_i,x_j)$. Further, denote by 
$\Delta_N\coloneqq\{ r\in \mathbb{R}^N\, \vert \, \mathbf{1}_N^Tr=1,\, r_i\geq 0 \}$ the set of all probability measures on the space $\mathcal{X}$ and let $\text{ri}(\Delta_N)\coloneqq \{ r\in \mathbb{R}^N\, \vert \, \mathbf{1}_N^Tr=1,\, r_i> 0 \}$ be its relative interior. More precisely, we identify the measures with their densities with respect to the counting measure on $\mathcal X$. Two probability measures $r,s\in\Delta_N$ define the set of all couplings between them
\begin{equation*}
\Pi(r,s)\coloneqq \left\lbrace \pi \in \mathbb{R}^{N\times N}\, \mid \, \sum_{j=1}^N \pi_{ij}=r_i,\, \sum_{i=1}^N \pi_{ij}=s_j,\, \pi_{ij}\geq 0\right\rbrace,
\end{equation*}
i.e., probability measures on the product space $\mathcal{X}\times\mathcal{X}$ whose row-marginal coincide with $r$ and column-marginal with $s$. The optimal transport problem 
\begin{equation}\tag{$\text{OT}$}\label{eq:OT}
\min_{\pi\in\Pi(r,s)} \sum_{i,j=1}^N c_{ij}\pi_{ij}
\end{equation}
seeks to find an \textit{optimal transport coupling} $\pi^\star$ between $r$ and $s$ such that the integrated cost is minimal among all possible couplings. Its dual problem reads
\begin{equation}\tag{$\text{DOT}$}\label{eq:DOT}
\begin{aligned}
\max_{\alpha,\beta \in \mathbb{R}^{N}} \quad
r^T\alpha+s^T\beta \quad
\text{s.t.} \quad
\alpha_i+\beta_j \leq\, c_{ij}, \, \forall\, i,j\in[N].
\end{aligned}
\end{equation}


\subsection{Assumptions in View of Optimal Transport}\label{subsec:assumptionsOT}

\textbf{Deterministic assumptions.} We start discussing the deterministic assumptions \eqref{ass:optBounded}, \eqref{ass:uniquexb} and \eqref{ass:dualnondegen} in the context of optimal transport. For \eqref{ass:optBounded} and \eqref{ass:uniquexb} recall that the set of couplings $\Pi(r,s)$ is never empty since it always contains the independence coupling $rs^T$. By Lemma \ref{lem:boundedness1}, the feasibility set $\Pi(r,s)$ is a compact subset of $\mathbb{R}^{N\times N}$. We conclude that optimal transport always attains at least one optimal solution
\begin{equation*}
\pi^\star \in \argmin_{\pi \in \Pi(r,s)} \, \sum_{i,j=1}^N c_{ij}\pi_{ij}
\end{equation*}
all of which are usually termed optimal transport couplings. However, uniqueness of an optimal transport coupling is in general not guaranteed, and we discuss this in more detail below. In total, assumptions \eqref{ass:optBounded} and \eqref{ass:uniquexb} hold if and only if 
\begin{equation}\tag{\textbf{AOT}}\label{ass:uniqueOT}
\textit{the optimal transport coupling }\pi^\star \textit{ is unique.}
\end{equation}
Under \eqref{ass:uniqueOT} the optimal transport coupling is of course a primal optimal basic solution, i.e., induced by an optimal basis.
\bigskip

Sufficient conditions for uniqueness and nondegeneracy in optimal transport problems can be considered to be of interest in their own. The proofs of the statements contained in the following are deferred to Appendix \ref{Appendix:OT}. In fact, various sufficient conditions ensuring uniqueness of an optimal transport coupling are known. Among the most prominent is the \emph{strict Monge} condition that the cost $c$ satisfies
\begin{equation}\label{eq:strictmonge}
c_{ij}+c_{i^{'}j^{'}}<c_{ij^{'}}+c_{i^{'}j}\,, \quad \forall \, i<i^{'}, j<j^{'},
\end{equation}
also taking into account possible relabelling of the indices \cite[Theorem 7]{dubuc1999note}. This translates to easily interpretable statements on the real line.

\begin{lem}\label{cor:uniquerealline}
Let $\mathcal{X}\coloneqq\{x_1<\ldots< x_N\}$ be a set of $N$ distinct ordered points on the real line. Suppose that the cost takes the form $c(x,y)=f(\vert x-y\vert)$ with $f:\R_+\to\R_+$ such that $f(0)=0$ and that $f$ fulfils either one of the following two conditions:
\begin{itemize}
\item[(i)] $f$ is strictly convex,
\item[(ii)] $f$ is strictly concave.
\end{itemize}
Then for any marginals $r,s\in\Delta_N$ the optimal transport problem \eqref{eq:OT} attains a unique solution.
\end{lem}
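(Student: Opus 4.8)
The plan is to reduce both cases to the uniqueness criterion recorded above, i.e.\ to \cite[Theorem 7]{dubuc1999note}, by verifying its structural hypothesis; the two cases behave quite differently. Throughout write $g(t):=f(|t|)$, so $c_{ij}=g(x_i-x_j)$. I would first record two elementary facts coming from $f(0)=0$, $f\ge0$: a convex (resp.\ concave) nonnegative function on $\R_+$ vanishing at the origin is automatically nondecreasing (in the concave case because a negative slope would force $f$ eventually negative), so strict convexity (resp.\ strict concavity) of $f$ makes $f$ strictly increasing with $f(t)>0$ for $t>0$; and in the convex case $g$ is in fact strictly convex on all of $\R$, equivalently $g$ is convex and affine on no nondegenerate interval, the only point needing attention being an interval straddling $0$, which is excluded since $g(0)=0$ while $f$ is strictly increasing and positive on $(0,\infty)$.

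\emph{Case (i), $f$ strictly convex.} I would verify the strict Monge inequality \eqref{eq:strictmonge} directly for the already sorted cost matrix, so that \cite[Theorem 7]{dubuc1999note} applies with no relabelling. Fix $i<i'$ and $j<j'$ and set $a=x_i<b=x_{i'}$, $c=x_j<d=x_{j'}$. A short inspection shows that among the four reals $a-c,\ b-d,\ a-d,\ b-c$ the pair $\{a-d,\,b-c\}$ furnishes the minimum and the maximum, the pair $\{a-c,\,b-d\}$ lies strictly in between, and $(a-c)+(b-d)=(a-d)+(b-c)$; writing each inner point as a convex combination of the two outer points with weights in $(0,1)$ (the weights avoid $0$ and $1$ because $a\ne b$ and $c\ne d$) and invoking the strict convexity of $g$ — now legitimate even when the four differences straddle the origin — yields $g(a-c)+g(b-d)<g(a-d)+g(b-c)$, i.e.\ $c_{ij}+c_{i'j'}<c_{ij'}+c_{i'j}$. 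This is exactly \eqref{eq:strictmonge}, and uniqueness follows.

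\emph{Case (ii), $f$ strictly concave.} Here lies the main obstacle. Repeating the four-point computation gives the \emph{reverse} of \eqref{eq:strictmonge} when the four differences share a sign, but the $V$-shape of $t\mapsto|t|$ near the origin spoils concavity of $g$, so for configurations straddling the origin the \emph{original} inequality reappears (e.g.\ when the two source points and the two target points share a common midpoint one gets $c_{ij}+c_{i'j'}<c_{ij'}+c_{i'j}$). Thus the sorted cost matrix is neither Monge nor anti-Monge, and no relabelling of the indices can repair this, so the strict Monge route is unavailable. Instead I would appeal to the strictly concave half of \cite[Theorem 7]{dubuc1999note}, or reprove it directly: given two optimal couplings $\pi\ne\pi'$ one passes to the optimal $(\pi+\pi')/2$ and uses that, under strict concavity, every optimal plan has a rigid support structure on the line — common mass is immobile, and any two overlapping transport segments must be nested rather than crossing — which for finitely supported $r,s$ pins the optimal coupling down uniquely. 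Making this structural lemma rigorous, and isolating precisely where strict (as opposed to weak) concavity is used, is the delicate part of the argument; everything else is bookkeeping.
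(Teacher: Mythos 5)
Your case (i) is correct and is actually cleaner than the paper's. Where Appendix~\ref{Appendix:OT} runs through four separate orderings of $x_i,x_{i'},x_j,x_{j'}$ (using convexity in one configuration, strict superadditivity of $f$ deduced from $f(0)=0$ in another, and plain monotonicity in a third), you instead observe that $g(t)\coloneqq f(|t|)$ is strictly convex on all of $\R$. Your justification is sound: $f\ge 0$, $f(0)=0$ and convexity force $f$ nondecreasing (a negative right derivative at $0$ would push $f$ below $0$), hence strictly increasing by strict convexity; and any affine segment of the even function $g$ straddling the origin would have to be constant, which contradicts $g(0)=0<g(t)$ for $t\ne0$. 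Feeding the four-point identity $(a-c)+(b-d)=(a-d)+(b-c)$, with $a-c$ and $b-d$ strictly interior to the pair $\{a-d,\ b-c\}$, into strict convexity of $g$ then yields the strict Monge inequality in one stroke, unifying the paper's case analysis.

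Case (ii), however, is a genuine gap, and one you flag yourself. You correctly diagnose that the Monge/anti-Monge route fails because $g$ is not concave across the origin, and you correctly name the two structural facts that drive the concave case (common mass stays put; the residual, mutually singular transport on the line is forced by cyclical monotonicity into a nested, non-crossing pattern). But you then write that making this rigorous is ``the delicate part of the argument'' and do not carry it out---and that delicate part is the actual content of the proof. The paper supplies it: after invoking \citet[Proposition 2.9]{gangbo1996geometry} to strip off common mass, it proves a bespoke lemma for finitely supported mutually singular $\mu,\nu$ by inducting on the alternating intervals $[a_0,a_1],[a_1,a_2],\dots$ on which the two measures live. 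At each step it uses cyclical monotonicity to show the $\mu$-mass of the leftmost interval must be sent into a quantile-determined prefix $[a_1,a^*]$ of the adjacent $\nu$-interval, and uses strict concavity to force that partial matching to be non-increasing and hence unique; removing the matched pieces and iterating terminates after finitely many steps. This induction is exactly what you have deferred. Note also that the paper does not invoke any ``strictly concave half'' of \citet{dubuc1999note} here but builds the concave case from scratch, so it is unclear that the citation you propose to lean on would discharge the obligation either.
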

The first statement follows by employing the Monge condition (see also \citet[Proposition A2]{mccann1999exact} for an alternative approach). The second case is more delicate, and indeed, the description of the unique optimal solution is more complicated. 
Moreover, for both cases the unique transport coupling can be computed by the simple Northwest corner algorithm \citep{hoffman1963simple}. Typical costs covered by Corollary \ref{cor:uniquerealline} are $d^p(x,y)=\vert x-y \vert^p$ for any $p\geq 0$ such that $p\notin \{0,1\}$. 

\begin{rem}[The real line for $p\in \{0,1\}$]\label{rem:realline}
In general, uniqueness statements on the real line for cost $c(x,y)=\vert x-y \vert^p$ with $p\in \{0,1\}$ do not hold. Consider the case that the probability measure $r$ is supported on $\{x_1<\ldots<x_k\}$ while $s$ has support $\{x_{k+1}<\ldots<x_N\}$ for some $1<k< N$. Then every coupling is optimal since for any $\pi\in\Pi(r,s)$ we find that 
\begin{equation*}
\sum_{i,j=1}^N \vert x_i-x_j\vert^p\pi_{ij}=
\begin{cases}
\sum_{i=1}^k r_ix_i - \sum_{j=k+1}^N s_jx_j,\quad &p=1,\\
1,\quad &p=0.
\end{cases}
\end{equation*}
This example may seem extreme, but it is fair to say that for $p\in\{0,1\}$ uniqueness is the exception rather than the rule.
\end{rem}
For a coupling $\pi\in\mathbb{R}^{N\times N}$ we define its support by the tuple of indices 
\begin{equation}
\text{supp}(\pi)\coloneqq \left\lbrace (i,j)\in [N]^2\, \vert \, \pi_{ij}>0\right\rbrace.
\end{equation}

\begin{defn}
Let $c\in\mathbb{R}^{N\times N}$ be an arbitrary cost matrix. An index set $\Gamma \in [N]\times [N]$ is said to be $c$-cyclically monotone if for any $n\in\mathbb{N}$, and any family $(i_1,j_1),\ldots,(i_n,j_n)$ with $(i_k,j_k)\in \Gamma$ it holds that
\begin{equation*}
\sum_{k=1}^n c_{i_k j_k}\leq \sum_{k=1}^n c_{i_k j_{k-1}}
\end{equation*}
with the convention that $j_0\coloneqq j_n$. Further, the set $\Gamma$ is said to be strictly $c$-cyclically monotone if additionally strict inequality holds for all cases such that the right-hand side contains at least one tuple $(i_k,j_{k-1})\notin \Gamma$ for $1\leq k \leq n$.
\end{defn}
Transport couplings are always supported on a $c$-cyclically monotone set and in fact this yields a characterization of optimality \cite[Theorem 5.10]{villani2008optimal}. We prove that uniqueness is equivalent for the support to be strictly $c$-cyclically monotone. 

\begin{thm}\label{thm:uniquenessOT}
For two marginals $r,s\in\Delta_N$ consider the optimal transport \eqref{eq:OT} with cost $c$ and suppose that $\pi^\star$ is an optimal transport coupling. Then $\pi^\star$ is unique if and only if $\text{supp}(\pi^\star)$ is strictly $c$-cyclically monotone.
\end{thm}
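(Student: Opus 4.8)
The proof rests on the characterisation recalled just above --- a feasible coupling is optimal if and only if its support is $c$-cyclically monotone (cf.\ \cite[Theorem 5.10]{villani2008optimal}) --- together with the elementary observation that a coupling may be perturbed along an \emph{alternating cycle} without leaving $\Pi(r,s)$: if $C$ is a matrix with vanishing row and column sums, supported on a cycle of the complete bipartite graph and alternating in sign along it, then $\pi^\star+\epsilon C$ is feasible for all small $\epsilon>0$ whenever $\pi^\star$ is strictly positive on the entries where $C$ equals $-1$, and its cost exceeds that of $\pi^\star$ by $\epsilon\langle c,C\rangle$. In particular $\Gamma:=\supp(\pi^\star)$ is automatically $c$-cyclically monotone, so only the ``strict'' clause is in question, and the plan is to prove both implications by contraposition.

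For the direction ``unique $\Rightarrow$ strictly $c$-cyclically monotone'', suppose $\Gamma$ fails to be strictly $c$-cyclically monotone, so there is a family $(i_1,j_1),\dots,(i_n,j_n)$ in $\Gamma$ whose cyclic rearrangement $(i_k,j_{k-1})$ is nontrivial yet $\sum_k c_{i_kj_k}=\sum_k c_{i_kj_{k-1}}$. The matrix $C$ with entries $+1$ on the $(i_k,j_k)$ and $-1$ on the $(i_k,j_{k-1})$ has vanishing marginals and $\langle c,C\rangle=0$; decomposing it conformally (separating genuine cycles and absorbing repeated rows or columns) and using $c$-cyclic monotonicity of $\Gamma$ edge-by-edge, one extracts a single nonzero alternating cycle $C$ with $\langle c,C\rangle=0$ whose $+1$-edges lie in $\Gamma$. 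If in addition some $-1$-edge of $C$ lies outside $\Gamma$, then $\pi^\star-\epsilon C$ is feasible for small $\epsilon>0$; otherwise all edges of $C$ lie in $\Gamma$ and $\pi^\star+\epsilon C$ is feasible for small $|\epsilon|$. Either way one obtains an optimal coupling distinct from $\pi^\star$, contradicting uniqueness.

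For ``strictly $c$-cyclically monotone $\Rightarrow$ unique'', suppose $\pi'\ne\pi^\star$ is another optimal coupling and set $\delta=\pi'-\pi^\star$; then $\delta$ has vanishing marginals, $\langle c,\delta\rangle=0$, and $\delta\ge-\pi^\star$, so the negative part of $\delta$ is supported in $\Gamma$. Decompose $\delta$ conformally into alternating cycles, $\delta=\sum_\ell\lambda_\ell C_\ell$ with $\lambda_\ell>0$, $\supp(C_\ell^{+})\subseteq\supp(\delta^{+})$ and $\supp(C_\ell^{-})\subseteq\supp(\delta^{-})\subseteq\Gamma$. Presenting the $-1$-edges of $C_\ell$ as a family in $\Gamma$ under a suitable cyclic indexing and applying $c$-cyclic monotonicity of $\Gamma$ gives $\langle c,C_\ell\rangle\ge0$ for each $\ell$; since $\sum_\ell\lambda_\ell\langle c,C_\ell\rangle=\langle c,\delta\rangle=0$ and the $\lambda_\ell$ are positive, in fact $\langle c,C_\ell\rangle=0$ for every $\ell$. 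Fix one such cycle $C$. If some $+1$-edge of $C$ lies outside $\Gamma$, then the $+1$-edges of $C$ form a family in $\Gamma$ whose cyclic rearrangement escapes $\Gamma$ while attaining equality, so $\Gamma$ is not strictly $c$-cyclically monotone; if all edges of $C$ lie in $\Gamma$, the same conclusion follows, since this cyclic rearrangement within $\Gamma$ is nontrivial and attains equality. This establishes the contrapositive.

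The steps I expect to require the most care are purely combinatorial: (i) the conformal (sign-consistent) cycle decomposition of a zero-marginal matrix on the bipartite support graph, with no cancellation between cycles; and (ii) the bookkeeping that turns the $+1$-edges, respectively the $-1$-edges, of an alternating cycle into a legitimate family $(i_1,j_1),\dots,(i_n,j_n)$ with the intended shifted family $(i_k,j_{k-1})$ --- it is precisely by running the two cyclic orientations of $C$ through the definition that the inequality from $c$-cyclic monotonicity is upgraded to the equality $\langle c,C\rangle=0$, and one must also reduce families that revisit a row or a column to genuine sub-cycles. An essentially equivalent alternative would argue through the optimal face of the transportation polytope and complementary slackness with \eqref{eq:DOT}, but the cyclic-monotonicity/perturbation route stays closest to the quantity in the statement.
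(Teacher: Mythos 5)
Your first direction (``unique $\Rightarrow$ strictly $c$-cyclically monotone'') follows the same route as the paper: from a family that violates strictness one builds a zero-marginal, zero-cost perturbation direction and shifts mass along it to produce a second optimal plan. The conformal cycle decomposition you insert is a reasonable refinement; the paper simply subtracts $\tau$ on the $(i_k,j_k)$ and adds $\tau$ on the $(i_k,j_{k-1})$ directly, leaving the bookkeeping about repeated indices implicit. So for this half the two arguments are essentially the same.

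Your second direction (``strictly $c$-cyclically monotone $\Rightarrow$ unique'') is a genuinely different route. The paper replaces $\pi^\star$ by a basic optimal $\pi$ with support contained in $\supp(\pi^\star)$, uses the tree structure of a basic plan, and shows via strict complementary slackness that the dual optimal solution is nondegenerate; Proposition \ref{prop:slackness} then gives primal uniqueness. You instead stay in the primal: decompose $\delta=\pi'-\pi^\star$ conformally into alternating cycles $C_\ell$, show $\langle c,C_\ell\rangle=0$ by applying $c$-cyclical monotonicity to the $-1$-edges (which lie in $\Gamma$) run backwards, and derive a contradiction. The primal route is more elementary and avoids the detour through the dual and Proposition \ref{prop:slackness}.

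However, there is a gap in your last step. When a cycle $C$ has \emph{all} its edges inside $\Gamma=\supp(\pi^\star)$, you assert ``the same conclusion follows, since this cyclic rearrangement within $\Gamma$ is nontrivial and attains equality.'' That is not a violation of strict $c$-cyclical monotonicity as defined in the paper: strict inequality is only required when the shifted family $(i_k,j_{k-1})$ contains at least one index \emph{outside} $\Gamma$. If the shift stays inside $\Gamma$, equality is permitted (and indeed forced, by applying $c$-cyclical monotonicity in both directions), so your argument produces no contradiction in this case. Concretely, this branch is exactly the situation $\supp(\pi')\subseteq\supp(\pi^\star)$, which can occur when $\pi^\star$ is not a vertex of $\Pi(r,s)$. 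The paper's proof handles this by reducing to a basic solution, whose support is a forest and hence cannot contain a cycle, so the problematic branch never arises there; you would need an analogous reduction (or to show directly that a cycle inside $\Gamma$ already forces nonuniqueness in a way compatible with the stated definition) to close the gap.
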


\begin{rem}
If the cost matrix $c$ fulfils the strict Monge condition \eqref{eq:strictmonge} then the optimal transport coupling obtained by applying the Northwest corner algorithm has support that is strictly $c$-cyclically monotone and hence it is unique. However, not every optimal transport coupling whose support is strictly $c$-cyclically monotone can be obtained (after a suitable relabelling of columns and rows) by the Northwest corner rule \cite[Example 1.2]{klinz2011northwest}. In other words, not every optimal transport coupling with strictly $c$-cyclically monotone support is based on an underlying cost $c$ satisfying the strict Monge condition \eqref{eq:strictmonge}.
\end{rem}
Additionally, we see below in Proposition \ref{prop:uniquenessOT} that under suitable assumptions optimal transport couplings are unique almost surely. Before we prove these statements, it will be convenient to discuss assumption \eqref{ass:dualnondegen} for the optimal transport \eqref{eq:OT}. Notice that if each dual basic feasible solution is nondegenerate, then clearly \eqref{ass:dualnondegen} holds. By \cite{klee1968facets} every \emph{primal} basic feasible solution is nondegenerate if for any proper subsets $A,B\subset [N]$ not both empty we have
\begin{equation}\label{eq:summabilityprimal}
\sum_{i\in A} r_i \neq \sum_{j\in B} s_j.
\end{equation} 
We can prove a related condition such that every \emph{dual} basic feasible solution for \eqref{eq:DOT} is nondegenerate (see Appendix \ref{Appendix:OT} for a proof). Further and to the best of our knowledge, the following condition has not yet appeared in the literature.

\begin{thm}\label{thm:dualnondegenerateOT}
Consider the optimal transport \eqref{eq:OT} for marginals $r,s\in\Delta_N$ and given cost $c$. Suppose that for any $n\geq 2$ and any family of indices $\left\lbrace (i_k,j_k)\right\rbrace_{1\leq k \leq n}$ with all $i_k$ pairwise different and all $j_k$ pairwise different it holds that
\begin{equation}\label{eq:summabilitydual}
\sum_{k=1}^n c_{i_k j_k}\neq \sum_{k=1}^n c_{i_k j_{k-1}}, \quad j_0\coloneqq j_n.
\end{equation}
Then all dual basic solutions are nondegenerate and in particular \eqref{ass:dualnondegen} holds.
\end{thm}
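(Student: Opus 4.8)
The plan is to translate the combinatorial condition \eqref{eq:summabilitydual} into a statement about the linear system defining a dual basic solution, and then show degeneracy would force a forbidden cyclic identity. First I would recall the structure of the constraint matrix for \eqref{eq:OT} written in standard form: the equality constraints are $\sum_j \pi_{ij}=r_i$ and $\sum_i \pi_{ij}=s_j$, so $A$ is the node--edge incidence matrix of the complete bipartite graph $K_{N,N}$ (with the usual rank deficiency of $1$ removed by dropping one redundant row). A basis $I$ is then a collection of $2N-1$ edges forming a spanning tree of $K_{N,N}$, and the dual basic solution $\lambda(I)=(\alpha,\beta)$ is the unique (up to the dropped normalization) solution of $\alpha_i+\beta_j=c_{ij}$ for all edges $(i,j)\in I$. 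This is the standard potential/node-price description of the dual.

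Next I would unpack what degeneracy of $\lambda(I)$ means: by definition it is degenerate if strictly more than $2N-1$ of the inequalities $\alpha_i+\beta_j\le c_{ij}$ hold with equality, i.e.\ there exists at least one non-tree edge $(i^*,j^*)\notin I$ with $\alpha_{i^*}+\beta_{j^*}=c_{i^*j^*}$. Adding that edge to the spanning tree $I$ creates a unique cycle in $K_{N,N}$; since the graph is bipartite, this cycle alternates between "forward" edges $(i_k,j_k)$ and "backward" edges $(i_k,j_{k-1})$ and has even length $2n$ for some $n\ge 2$, visiting pairwise distinct row indices $i_1,\dots,i_n$ and pairwise distinct column indices $j_1,\dots,j_n$ (with $j_0:=j_n$). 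Every edge of this cycle satisfies the equality $\alpha_i+\beta_j=c_{ij}$ — the tree edges because they are in the basis, the extra edge by the assumed tightness. Summing the equalities $\alpha_{i_k}+\beta_{j_k}=c_{i_kj_k}$ around the cycle and also $\alpha_{i_k}+\beta_{j_{k-1}}=c_{i_kj_{k-1}}$, the $\alpha$'s and $\beta$'s telescope out and we obtain exactly $\sum_{k=1}^n c_{i_kj_k}=\sum_{k=1}^n c_{i_kj_{k-1}}$, contradicting \eqref{eq:summabilitydual}. Hence no dual basic solution is degenerate, and in particular \eqref{ass:dualnondegen} holds by Lemma \ref{lem:dualnondegenerate} (or directly, since nondegeneracy of all dual basic feasible solutions trivially implies \eqref{ass:dualnondegen}).

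The main obstacle I anticipate is purely bookkeeping rather than conceptual: one must handle the rank-deficiency of the optimal transport constraint matrix carefully (the sum of row-marginal constraints equals the sum of column-marginal constraints, so one row is dropped and the dual has a one-dimensional normalization freedom), and one must be precise that a "basis" in the standard-form sense corresponds exactly to a spanning tree of $K_{N,N}$ and that the extra tight constraint corresponds to a single additional edge closing a unique even cycle with all row- and column-indices distinct. Once the dictionary between bases, spanning trees, and alternating cycles is set up cleanly, the telescoping-sum argument is immediate. I would also remark that the same cycle argument, run in reverse, shows that \eqref{eq:summabilitydual} is essentially necessary for nondegeneracy of all dual bases, paralleling the Klee--Witzgall type condition \eqref{eq:summabilityprimal} on the primal side.
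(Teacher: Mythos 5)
Your argument is correct and follows essentially the same route as the paper: both proofs exploit the fact that a dual basis corresponds to a spanning tree of the bipartite transportation graph, close the tight off-tree constraint into a unique alternating cycle with pairwise distinct row and column indices, and telescope the potentials $\alpha,\beta$ around that cycle to produce the forbidden cyclic cost identity contradicting \eqref{eq:summabilitydual}. The only cosmetic difference is framing — the paper phrases the argument as establishing $\lambda(I_k)\ne\lambda(I_l)$ for two distinct dual feasible bases, whereas you prove nondegeneracy of a single basic solution directly — but the key cycle-telescoping step is identical.
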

Condition \eqref{eq:summabilitydual} can be considered as the dual to \eqref{eq:summabilityprimal} and we refer to it as the \emph{dual summability} condition. An immediate consequence is the uniqueness of the optimal transport coupling.

\begin{cor}
Suppose that the cost $c$ satisfies the \emph{dual summability} condition \eqref{eq:summabilitydual}. Then for any marginals $r,s\in\Delta_N$ the optimal transport \eqref{eq:OT} attains a unique optimal transport coupling.
\end{cor}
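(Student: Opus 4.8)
The plan is to obtain the corollary as a short formal consequence of \textbf{Theorem \ref{thm:dualnondegenerateOT}} together with the primal-uniqueness criterion recorded in \textbf{Proposition \ref{prop:slackness}}. First I would cast the optimal transport problem \eqref{eq:OT} in the standard form \eqref{eq:standardLP} with right-hand side $b=(r,s)$ (dropping one redundant marginal constraint so that the constraint matrix has full rank $m=2N-1\le N^2=d$), so that \eqref{eq:DOT} is its dual \eqref{eq:standardDP} in the sense of \eqref{eq:standardLP}--\eqref{eq:standardDP}. As already observed in Section \ref{subsec:assumptionsOT}, the coupling set $\Pi(r,s)$ is nonempty (it contains the independence coupling $rs^T$) and compact, being cut out of the unit box by the marginal equalities; hence $OPT(b)$ is nonempty and bounded, i.e., assumption \eqref{ass:optBounded} holds. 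By strong duality and the fundamental theorem of linear programming (Theorems \ref{thm:strongduality} and \ref{thm:optFeas}), the dual \eqref{eq:DOT} therefore attains its optimum at some dual optimal basic solution $\lambda(I)$.

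Next I would invoke Theorem \ref{thm:dualnondegenerateOT}: under the dual summability condition \eqref{eq:summabilitydual}, \emph{every} dual basic solution is nondegenerate, and in particular so is the dual optimal basic solution $\lambda(I)$ produced above. Then I apply Proposition \ref{prop:slackness}(i) in the form ``if \eqref{eq:standardDP} has a nondegenerate optimal basic solution, then \eqref{eq:standardLP} has a unique solution'': this yields at once that \eqref{eq:OT} has a unique optimal solution, that is, a unique optimal transport coupling, which is the claim.

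There is no genuine obstacle here — the argument is essentially a one-line chaining of results already in place — and the only points needing a moment's attention are administrative: checking that one is legitimately allowed to pass to a full-rank standard-form reformulation so that the basis machinery of Section \ref{sec:Preliminaries} applies, verifying \eqref{ass:optBounded} so that a dual optimal basic solution actually exists to which Theorem \ref{thm:dualnondegenerateOT} can be applied, and making sure the correct direction of Proposition \ref{prop:slackness} (the one sending dual nondegeneracy to primal uniqueness) is used. If one preferred not to quote Proposition \ref{prop:slackness}, an alternative would be to combine Theorem \ref{thm:dualnondegenerateOT} with Lemma \ref{lem:dualnondegenerate} to see that \eqref{ass:dualnondegen} holds and then argue primal uniqueness directly via complementary slackness; but this is strictly more work than the route above, so I would not take it.
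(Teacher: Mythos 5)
Your proof is correct and follows essentially the same route as the paper: invoke Theorem \ref{thm:dualnondegenerateOT} to get nondegeneracy of the dual optimal basic solution, then apply Proposition \ref{prop:slackness}(i) in the direction "dual nondegenerate optimal basic solution implies primal uniqueness." The paper's proof is shorter because it leaves the administrative points you flag (full-rank reformulation, nonemptiness and boundedness of $\Pi(r,s)$, existence of a dual optimal basic solution) implicit, having already established them in Section \ref{subsec:assumptionsOT}.
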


\begin{proof}
Simply observe that condition \eqref{eq:summabilitydual} implies each dual basic feasible solution to be nondegenerate. In particular, this holds for the dual optimal basic solution. By Proposition \ref{prop:slackness} we deduce uniqueness of the primal optimal basic solution.
\end{proof}
The dual summability condition \eqref{eq:summabilitydual} only depends on the cost and is independent of the marginal weights. On the real line with cost $c(x,y)=\vert x-y\vert^p$ and $p\ge0$, the condition holds if and only if $p\notin\{0,1\}$ (see Corollary \ref{cor:uniquerealline} and Remark \ref{rem:realline}). If the underlying space involves too many symmetries, such as a regular grid with cost defined by the underlying grid structure, it is usually never satisfied. Nevertheless, for many cost functions the set of all positions of finitely many points such that dual summability \eqref{eq:summabilitydual} does not hold has Lebesgue measure zero. Consequently, the optimal transport coupling $\pi^\star$ is generically unique.

\begin{prop}\label{prop:uniquenessOT}
For any two probability vectors $r,s\in\Delta_N$ define the probability measures $r(\mathbf{X})=\sum_{k=1}^N r_k\delta_{X_k}$ and $s(\mathbf{Y})=\sum_{k=1}^N s_k\delta_{Y_k}$ with random support drawn from two independent collections of $\mathbb{R}^D$-valued random variables $X_1,\ldots,X_N\overset{\text{i.i.d.}}{\sim}\mu$ and $Y_1,\ldots,Y_N\overset{\text{i.i.d.}}{\sim}\nu$ for $D\geq 2$. Suppose that $\mu$ and $\nu$ are absolutely continuous with respect to Lebesgue measure and consider the optimal transport \eqref{eq:OT} between $r(\mathbf{X})$ and $s(\mathbf{Y})$ with cost for $p,q>0$ and $p$ and $q$ not both equal to one between two points $x,y\in \mathbb{R}^D$ defined by
\begin{equation*}
c(x,y)=\Vert x-y\Vert^p_q=\left(\sum_{i=1}^D\vert x_i-y_i\vert^q\right)^{\frac{p}{q}}\, .
\end{equation*}
Then the dual summability condition \eqref{eq:summabilitydual} holds almost surely. In particular, with probability one for any $r,s\in\Delta_N$ and pair of marginals $r(\mathbf{X})$ and $s(\mathbf{Y})$, the corresponding optimal transport coupling is unique.
\end{prop}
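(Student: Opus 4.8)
The plan is to reduce everything to verifying the dual summability condition \eqref{eq:summabilitydual} almost surely. Since \eqref{eq:summabilitydual} is a property of the cost matrix $c=(c(X_i,Y_j))_{i,j}$ alone and does not involve the marginals, the corollary immediately preceding this statement (equivalently Theorem~\ref{thm:dualnondegenerateOT} together with Proposition~\ref{prop:slackness}) then yields, on the \emph{same} almost-sure event, uniqueness of the optimal transport coupling simultaneously for all $r,s\in\Delta_N$. Because $N$ is fixed there are only finitely many admissible index families $\mathcal{C}=\{(i_k,j_k)\}_{1\le k\le n}$ with $2\le n\le N$, all $i_k$ distinct and all $j_k$ distinct; hence by a union bound it suffices to show that each event
$$E_{\mathcal{C}}=\{\,\textstyle\sum_{k=1}^n c(X_{i_k},Y_{j_k})=\sum_{k=1}^n c(X_{i_k},Y_{j_{k-1}})\,\},\qquad j_0:=j_n,$$
has probability zero. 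As $\mu,\nu$ are absolutely continuous and all points are independent, the joint law of the $2n$ relevant points $X_{i_1},\dots,X_{i_n},Y_{j_1},\dots,Y_{j_n}$ is absolutely continuous on $(\R^D)^{2n}$, so it is enough to prove that $\{F_{\mathcal{C}}=0\}\subseteq(\R^D)^{2n}$ has Lebesgue measure zero, where $F_{\mathcal{C}}=\sum_k c(X_{i_k},Y_{j_k})-\sum_k c(X_{i_k},Y_{j_{k-1}})$.

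The central step is a Fubini argument isolating the single point $y:=Y_{j_1}$. Let $k^\star$ be the unique index with $j_{k^\star}=j_1$; then $y$ enters $F_{\mathcal{C}}$ only through $+c(X_{i_{k^\star}},y)$ and $-c(X_{i_{k^\star+1}},y)$ (indices mod $n$), so $F_{\mathcal{C}}=c(a,y)-c(b,y)+G$ with $a=X_{i_{k^\star}}$, $b=X_{i_{k^\star+1}}$ and $G$ not depending on $y$. By Fubini it suffices that for Lebesgue-almost every configuration of the remaining points the slice $\{y\in\R^D:c(a,y)-c(b,y)=-G\}$ is Lebesgue-null. Fix such remaining points with $a\ne b$ (this excludes only a null set, since $i_{k^\star}\ne i_{k^\star+1}$ because $n\ge2$, and $\mu$ is atomless). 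On the open full-measure set $W=\{y:y_l\ne a_l,\ y_l\ne b_l\ \text{for all }l\}$, a finite disjoint union of connected ``chambers'' determined by the signs $\operatorname{sgn}(a_l-y_l)$ and $\operatorname{sgn}(b_l-y_l)$, each of $c(a,\cdot)$ and $c(b,\cdot)$ is real-analytic (on a chamber $|a_l-y_l|^q$ is a positive real-analytic function of $y$, hence so is $(\sum_l|a_l-y_l|^q)^{p/q}$). Since a real-analytic function that is not identically zero on a connected open set has a Lebesgue-null zero set, it remains only to show that, for almost every configuration of the remaining points, $y\mapsto c(a,y)-c(b,y)$ is non-constant on every chamber; the slice is then null, and summing over the finitely many chambers and over the null set $W^c$ gives $\mathrm{Leb}(\{F_{\mathcal{C}}=0\})=0$.

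This last assertion is the main obstacle, precisely because the cost is not globally smooth, and it is where the hypothesis $D\ge2$ and the split between $q\ne1$ and $q=1$ (recall $p,q$ are not both $1$) enter. Suppose $c(a,\cdot)-c(b,\cdot)$ is constant on a chamber and write $u_l=|a_l-y_l|$, $v_l=|b_l-y_l|$ (positive there). Differentiating in $y_l$ forces first $\operatorname{sgn}(a_l-y_l)=\operatorname{sgn}(b_l-y_l)$ for each $l$ (both coefficients in the derivative being positive), and then the identities $(\sum_m u_m^q)^{p/q-1}u_l^{q-1}=(\sum_m v_m^q)^{p/q-1}v_l^{q-1}$ for all $l$ and all $y$ in the chamber. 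If $q\ne1$, dividing the $l$th identity by the $l'$th (here $D\ge2$ is used) gives $u_l/u_{l'}=v_l/v_{l'}$; holding all coordinates but $y_l$ fixed then forces $a_l-y_l$ and $b_l-y_l$ to be proportional affine functions of $y_l$ on an interval, hence (with matching signs) $a_l=b_l$, for every $l$, contradicting $a\ne b$. If $q=1$, then $p\ne1$ and on the chamber $c(x,\cdot)=L_x^p$ with $L_x$ affine and positive; constancy forces $L_a^{p-1}\equiv L_b^{p-1}$, hence $L_a\equiv L_b$ as affine maps, i.e.\ $\sigma\cdot(a-b)=0$ for the chamber's sign vector $\sigma\in\{\pm1\}^D$. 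Excluding the finite union $\bigcup_{\sigma\in\{\pm1\}^D}\sigma^\perp$---a Lebesgue-null subset of the space of pairs $(a,b)$---removes this possibility as well. Thus, off a null set of the remaining points, $c(a,\cdot)-c(b,\cdot)$ is non-constant on every chamber, which closes the argument; the anticipated difficulty is exactly establishing this non-constancy, since it demands a chamber-by-chamber analysis and, in the $q=1$ case, the genuine exceptional hyperplanes $\sigma^\perp$ of point positions.
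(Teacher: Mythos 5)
Your proposal is correct and reaches the conclusion through a genuinely different reduction than the paper's. Both proofs begin identically: reduce to the dual summability condition \eqref{eq:summabilitydual} via Theorem \ref{thm:dualnondegenerateOT} and Proposition \ref{prop:slackness} (observing that the condition involves only the cost matrix and so decouples from the marginals), take a union bound over the finitely many admissible index cycles, and exploit that the cost is real-analytic on open chambers determined by coordinate sign patterns. The divergence is in how the nonvanishing of the cyclic difference is established. The paper works directly in the full $(\mathbb{R}^D)^{2n}$, partitions it into sign-determined connectedness components on which $f(\mathbf{x},\mathbf{y})=\sum_k\|x_k-y_{k-1}\|_q^p-\|x_k-y_k\|_q^p$ is analytic, computes one directional derivative in $x_1$, and, whenever that derivative vanishes at a point, perturbs $y_n$ (taking the perturbation direction $j\ne i$ when $p\ne q$, and $j=i$ when $p=q\ne1$) to produce a nearby point with a nonzero derivative, finally invoking a zero-set lemma for analytic functions. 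You instead Fubini out all variables except $y=Y_{j_1}$, reducing to showing that, for Lebesgue-a.e.\ choice of the other points, the single-variable function $y\mapsto c(a,y)-c(b,y)$ is non-constant on every chamber of $\mathbb{R}^D$: differentiating first forces the chamber sign patterns of $a$ and $b$ to coincide coordinatewise, and then yields either $a=b$ (when $q\ne1$, via dividing the $l$-th identity by the $l'$-th, where $D\ge2$ enters) or the chamber sign vector $\sigma$ orthogonal to $a-b$ (when $q=1$, $p\ne1$), which you exclude as a finite union of hyperplanes in the $(a,b)$-space. The Fubini slicing makes the role of $D\ge2$ and the excluded case $p=q=1$ quite explicit and replaces the external analytic-zero-set citation with an elementary non-constancy argument, at the cost of a heavier chamber-by-chamber case analysis (particularly the exceptional hyperplanes $\sigma^\perp$ in the $q=1$ case), which the paper absorbs into its single perturbation step.
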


\begin{rem}
As the proof shows, the result is valid for more general cost functions.  In particular, $p$ can be strictly negative and the result will hold true.  This includes the Coulomb cost $(p=-1)$ that has applications in physics \citep{cotar2013density}.  If $p\ne 1$, then $q$ can also be infinite, but if $q=\infty$ and $p=1$ then uniqueness fails in a similar fashion as in the case $p=q=1$ (see Remark \ref{rem:nonunique} below).
\end{rem}
\cite{wang2013linear} prove a similar result for the specific case $p=q=2$ and for \emph{fixed} marginal weights $r,s$. In comparison, our result holds uniformly over all marginal weights and for more general cost functions. Moreover, the statement remains correct if we define both measures $r,s$ on the same random locations $X_1,\ldots,X_N\overset{\text{i.i.d.}}{\sim}\mu$ with $\mu$ absolutely continuous with respect to Lebesgue measure, i.e., $r(\mathbf{X})=\sum_{k=1}^N r_k\delta_{X_k}$, $s(\mathbf{X})=\sum_{k=1}^N s_k\delta_{X_k}$ and cost $c(X_i,X_j)=\Vert X_i-X_j\Vert^p$ (see Remark \ref{rem:uniquenessOT} in Appendix \ref{Appendix:OT}).

\begin{rem}[Non-uniqueness in higher dimensions $D$ for $p\in\{0,1\}$]\label{rem:nonunique}
Similar cases as in the one dimensional case (Remark \ref{rem:realline}) can be found in higher dimensions, where we suppose here $q=1$. Suppose that $r$ and $s$ are supported on random locations $X_1,\ldots,X_N\overset{\text{i.i.d.}}{\sim} \mathcal{N}_D(\mu_1,\Sigma_1)$ and independent to that $Y_1,\ldots,Y_N\overset{\text{i.i.d.}}{\sim} \mathcal{N}_D(\mu_2,\Sigma_2)$, where $\mathcal{N}_D(\mu,\Sigma)$ is a $D$-dimensional Gaussian distribution with mean $\mu$ and covariance matrix $\Sigma$. For two vectors $x,y\in\mathbb{R}^D$ we write $x\leq y$ if the order holds coordinatewise. Conditioned on the event $\max X_i<\min Y_j$, any coupling between $r(\mathbf{X})$ and $s(\mathbf{Y})$ is an optimal transport coupling for cost $c(x,y)=\Vert x-y \Vert^p$ for $p\neq 1$ . In particular, this has positive probability and hence we cannot deduce for uniqueness almost surely. 
\end{rem}
We conclude with another sufficient conditions for assumption \eqref{ass:dualnondegen}.

\begin{prop}\label{prop:dualnondegenerateOT}
Consider the optimal transport \eqref{eq:OT} for marginals $r,s\in\Delta_N$ and given cost $c$. Then assumption \eqref{ass:dualnondegen} holds if for any proper subsets $A,B\subset [N]$ not both empty
\begin{equation*}
\sum_{i\in A} r_i \neq \sum_{j\in B} s_j
\end{equation*}
and either of the following two conditions holds:
\begin{itemize}
\item[(i)] The cost $c$ has strict Monge property \eqref{eq:strictmonge}.
\item[(ii)] The support of some optimal transport coupling $\pi^\star$ is strictly $c$-cyclically monotone.
\end{itemize}
\end{prop}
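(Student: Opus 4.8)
The plan is to reduce both cases to a single scenario: that the optimal transport coupling $\pi^\star$ is the \emph{unique} and \emph{nondegenerate} primal optimal basic solution of \eqref{eq:OT}. First I would note that \eqref{ass:optBounded} holds automatically here, since $\Pi(r,s)$ is a nonempty compact polytope (Lemma~\ref{lem:boundedness1}), so that Lemma~\ref{lem:dualnondegenerate} is available. Next I would argue that in both cases $\pi^\star$ is the unique optimal coupling: under (ii) this is exactly Theorem~\ref{thm:uniquenessOT}, while under (i) the strict Monge property \eqref{eq:strictmonge} produces (via the Northwest corner construction, see the remark following Theorem~\ref{thm:uniquenessOT}, or directly \cite{dubuc1999note}) an optimal coupling whose support is strictly $c$-cyclically monotone, which reduces (i) to (ii). Hence $OPT(b)=\{\pi^\star\}$; since \eqref{eq:OT} admits a primal optimal basic solution by Theorem~\ref{thm:optFeas}(ii), this forced singleton shows $\pi^\star$ itself is a primal optimal basic solution, and the summability hypothesis $\sum_{i\in A}r_i\neq\sum_{j\in B}s_j$ over proper nonempty $A,B$ is precisely Klee's condition \cite{klee1968facets}, under which \emph{every} primal basic feasible solution of \eqref{eq:OT}---in particular $\pi^\star$---is nondegenerate.

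With \eqref{eq:OT} now known to possess a unique nondegenerate optimal basic solution, I would apply Proposition~\ref{prop:slackness}(ii) (with primal $=$ \eqref{eq:OT}, dual $=$ \eqref{eq:DOT}) to conclude that \eqref{eq:DOT} has a unique nondegenerate optimal solution. Every dual optimal basic solution belongs to the dual optimality set and therefore coincides with this unique solution, so all dual optimal basic solutions are nondegenerate; by Lemma~\ref{lem:dualnondegenerate} this is equivalent to \eqref{ass:dualnondegen}, which is the assertion. Alternatively one may observe directly that a nondegenerate dual vertex is cut out by exactly $m$ tight constraints, hence is induced by a single basis, so that the index set appearing in \eqref{ass:dualnondegen} is a singleton and the condition holds vacuously.

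The only nonroutine ingredient, and thus the main obstacle, is confirming that the stated hypotheses genuinely place us in the ``unique and nondegenerate primal optimal basic solution'' regime: uniqueness of $\pi^\star$ under (i) leans on the external Monge / Northwest-corner facts, and the passage from ``unique optimal coupling'' to ``basic feasible solution'' uses the elementary but essential observation (via Theorem~\ref{thm:optFeas}, or Lemma~\ref{lem:rulesOnly}) that a unique minimizer of a linear functional over a polytope must be a vertex. Once this reduction is secured, the implication chain through Proposition~\ref{prop:slackness}(ii) and Lemma~\ref{lem:dualnondegenerate} is purely formal.
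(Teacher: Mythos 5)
Your proof is correct and uses essentially the same ingredients as the paper's: Klee's summability condition gives nondegeneracy of all primal basic feasible solutions, (i)/(ii) give uniqueness of the optimal coupling (with (i) reduced to (ii) via the Northwest-corner / Monge remark), and the conclusion follows. The paper's own proof takes the shorter final step you mention as an alternative at the end: once the unique optimal coupling is a nondegenerate basic feasible solution, its basis is unique, so $K=1$ and \eqref{ass:dualnondegen} holds vacuously; the detour through Proposition~\ref{prop:slackness}(ii) and Lemma~\ref{lem:dualnondegenerate} is sound but not needed.
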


\textbf{Probabilistic assumptions.} Let us focus on the probabilistic assumptions \eqref{ass:cltforb} and \eqref{ass:feasforn}. For this purpose it turns out to be convenient to switch perspective and vectorize the optimal transport problem. Consider the cost $c$ as a vector in $\mathbb{R}^{N^2}$ with entries $c_{(i-1)N+j}\coloneqq c(x_i,x_j)$ and define the coefficient matrix (also known as node-arc incidence matrix) as
\begin{equation}\label{eq:OTmatrix}
A=\begin{pmatrix}
 \mathbf{1}_N^T\\
& \ddots &\\
& & \mathbf{1}_N^T\\
\mathbf{I}_N & \dots & \mathbf{I}_N 
\end{pmatrix}
\, \in \R^{2N\times N^2}.
\end{equation}
Then the set of all couplings $\Pi(r,s)$ can equivalently be defined as
\begin{equation*}
\Pi(r,s)\coloneqq\left\lbrace \pi\in \mathbb{R}^{N^2}\,\vert\, A\pi=\begin{bmatrix}
r\\s
\end{bmatrix},\,\pi\geq 0  \right\rbrace,
\end{equation*}
where the linear equation $A\pi=[r,s]^T$ simply encodes that the probability measure $\pi$ if considered as a matrix has to have marginals equal to $r$ and $s$, respectively. The constraint matrix $A\in \mathbb{R}^{2N\times N^2}$ in \eqref{eq:OTmatrix} has rank $2N-1$ instead of $2N$. In fact, for any coupling $\pi\in\Pi(r,s)$ fixing $N-1$ of its row sums and $N$ of its column sums suffices to fully characterize its marginals. This leads to one degree of freedom in the constraint matrix. To obtain a matrix of full rank we remove the $N$-th row of $A$ in \eqref{eq:OTmatrix} which motivates the following definition.

\begin{defn}[Reduced Optimal Transport]\label{rem:reducedvector}
Describe the set of all couplings between $r,s\in\Delta_N$ by 
\begin{equation}
\Pi_{\dagger}(r,s)\coloneqq \left\lbrace \pi\in \mathbb{R}^{N^2}\,\vert\, A_{\dagger}\pi=\begin{bmatrix}
r_{\dagger}\\s
\end{bmatrix},\,\pi\geq 0  \right\rbrace\, ,
\end{equation}
where  $r_{\dagger}\in\mathbb{R}^{N-1}$ consists of the first $N-1$ entries of $r$. Thus $\Pi_{\dagger}(r,s)$ is now characterized by $A_{\dagger}$ of full rank $2N-1$.
The subscript indicates that we remove the $N$-th row of the matrix $A$ in \eqref{eq:OTmatrix} and the $N$-th entry of the probability measure $r$ such that $A_\dagger\in\mathbb{R}^{(2N-1)\times N^2}$ and $r_{\dagger}\in\mathbb{R}^{N-1}$. 
\end{defn}
According to Remark \ref{rem:reducedvector} the optimal transport problem \eqref{eq:OT} can be stated as the standard linear program
\begin{equation}\tag{$\text{P}_\text{OT}$}\label{eq:OTprimal}
\min_{\pi\in \Pi_{\dagger}(r,s)} c^T\pi.
\end{equation} 
The fact that the set of all couplings between $r$ and $s$ only depends on the reduced vector $r_\dagger$ is reflected in the dual \eqref{eq:DOT} by deleting one dual variable. The corresponding dual linear program is
\begin{equation}\tag{$\text{D}_\text{OT}$}\label{eq:OTdual}
\begin{aligned}
\max_{\alpha \in \mathbb{R}^{N-1},\,\beta\in\mathbb{R}^N} \quad
r_\dagger^T\alpha+s^T\beta \quad
\text{s.t.} \quad
\alpha_i+\beta_j &\leq\, c_{(i-1)N+j}, \, \forall\, i,j\in[N-1],\\
\beta_j&\leq c_{(N-1)N+j},\, \forall\, j\in[N].
\end{aligned}
\end{equation}
To illustrate the upcoming distributional results, we consider the following optimal transport instance with $N=3$ that serves as our guiding example throughout the rest of this section. We also recall the notion of a feasible basis which is crucial for our general distributional theory.
\begin{exm}\label{exm:OT}
Let the ground space $\mathcal{X}=\{x_1<x_2<x_3\}$ consist of $N=3$ points on the real line with cost $c(x_i,x_j)=\vert x_i-x_j\vert^p$ for $p>0$. For two probability vectors $r,s\in\Delta_3$ on $\mathcal{X}$, the optimal transport problem \eqref{eq:OTprimal} reads
\begin{small}
\begin{equation*}
\begin{aligned}
\min_{\pi\in\mathbb{R}^9} \quad
c^T\pi \quad
\text{s.t.} \quad
 A_{\dagger}\pi=\begin{bmatrix}
r_{\dagger}\\s
\end{bmatrix},\,\pi\geq 0 
\end{aligned}
\end{equation*}
\end{small}
with cost $c=\left(0,\vert x_1-x_2\vert^p,\vert x_1-x_3\vert^p,
\vert x_2-x_1\vert^p,0,\vert x_2-x_3\vert^p,
\vert x_3-x_1\vert^p,\vert x_3-x_2\vert^p,0\right)\in \mathbb{R}^9$ and constraint (or node-arc incidence matrix)
\begin{small}
\begin{equation*}
A_\dagger =
\begin{pmatrix}
1&1&1\\
&&&1&1&1\\
1 &&& 1 &&& 1 \\
&1&&& 1 &&& 1\\
&& 1&&& 1 &&& 1\\
\end{pmatrix}\in \mathbb R^{5\times 9}.
\end{equation*}
\end{small} 
A basis $I$ is a subset of cardinality five out of the column index set $\{1,\ldots,9\}$ from $A_\dagger$ such that the sub-matrix ${A_\dagger}_I$ contains five independent columns. For example, the subset $I=\{1,2,3,5,9\}$ constitutes a basis. For optimal transport it is convenient to think of a primal feasible solution in terms of a transport matrix $\pi \in \mathbb{R}^{3\times 3}$ with $\pi_{ij}$ encoding mass transportation from source $i$ to destination $j$. In this way, a basis $I$ can be identified with its \textit{transport scheme} of primal basic transport matrices. More precisely, the basis $I=\{1,2,3,5,9\}$ corresponds to the transport scheme
\begin{small}
\begin{equation*}
TS(I)\coloneqq\begin{pmatrix}
\ast & \ast & \ast\\
 & \ast\\
&&\ast\\
\end{pmatrix}.
\end{equation*}
\end{small}
Each possible nonzero entry is marked by a star and the specific values are defined by $\pi(I,(r_\dagger,s))=({A_\dagger}_I)^{-1}(r_\dagger,s)\in\mathbb{R}^9$. In particular, the above transport scheme yields a primal basic feasible solution if and only if each coordinate is nonnegative. For instance, basis $I=\{1,2,3,5,9\}$ induces the primal basic feasible solution
\begin{small}
\begin{equation*}
\pi\left(I,(r_\dagger,s)\right)=\begin{pmatrix}
s_1 & s_2-r_2 & r_1+r_2-s_1-s_2\\
& r_2\\
&&s_1+s_2+s_3-r_1-r_2
\end{pmatrix}=\begin{pmatrix}
s_1 & s_2-r_2 & s_3-r_3\\
& r_2\\
&&r_3
\end{pmatrix},
\end{equation*}
\end{small}
where for the second equality we specifically use that the probability vectors $r,s$ sum up to one, respectively. By nonnegativity, $\pi\left(I,(r_\dagger,s)\right)$ is feasible if and only if $s_2\geq r_2$ and $s_3\geq r_3$.
\end{exm}

In statistical applications the probability measures $r,s\in\Delta_N$ are usually unknown and instead one has access to $\mathcal{X}$-valued random variables $X_1,\ldots,X_n\overset{i.i.d.}{\sim} r$ and, independently, $Y_1,\ldots,Y_m\overset{i.i.d.}{\sim} s$. We can then estimate $r,s\in\Delta_N$ by empirical probability measures
\begin{equation*}
\hat{r}_n\coloneqq \frac{1}{n}\sum_{i=1}^n \delta_{X_i}, \quad \hat{s}_m\coloneqq \frac{1}{m}\sum_{j=1}^m \delta_{Y_j}.
\end{equation*}
Notice that as $\hat{r}_n$ and $\hat{s}_m$ are again probability measures, the set of all couplings $\Pi_\dagger(\hat{r}_n,\hat{s}_m)$ remains non-empty and compact. In particular, assumption \eqref{ass:feasforn} is always satisfied. Hence, the empirical probability measures give rise to an empirical optimal transport coupling
\begin{equation*}
\hat{\pi}_{n,m} \in \argmin_{\Pi_\dagger(\hat{r}_n,\hat{s}_m)}\, c^T\pi,
\end{equation*}
which is denoted as the \emph{two-sample} case. We additionally denote by $\hat{\pi}_n$ the optimal transport coupling between $\hat{r}_n$ and $s$ which is referred to the \emph{one-sample} case since estimation is restricted to $r$. In view of our distributional limit results, we now characterize the statistical fluctuation of the empirical transport couplings $\hat{\pi}_n$ (and $\hat{\pi}_{n,m}$) around its population version 
\begin{equation*}
\pi^\star\in \argmin_{\Pi_\dagger(r,s)}\, c^T\pi
\end{equation*}
by a distributional limit law. In view of assumption \eqref{ass:cltforb} we require a weak limit for the empirical process for the marginal distributions. For $v\in \Delta_N$, we denote by $G(v)$ the $N$-dimensional central Gaussian distribution with covariance matrix 
\begin{equation}\label{eq:covmatrix}
\Sigma(v)\coloneqq \begin{bmatrix}
v_1(1-v_1) & -v_1v_2 & \ldots & -v_1v_N\\
-v_1v_2 & v_2(1-v_2) & \ldots & -v_2v_N\\
\vdots & & \ddots & \vdots\\
-v_1v_N & -v_2v_N &\ldots & v_N(1-v_N)
\end{bmatrix}\, .
\end{equation}
By the standard multivariate central limit theorem we conclude for sample size $n$ tending to infinity that 
\begin{equation}\label{eq:empprocess1}
\sqrt{n}\left(\hat{r_\dagger}_n-r_\dagger \right)\xrightarrow[]{D} G^1(r_\dagger)
\end{equation}
and similarly for the two sample case with $\frac{m}{n+m}\to \lambda\in (0,1)$ that 
\begin{equation}\label{eq:empprocess2}
\sqrt{\frac{nm}{n+m}}\left( \begin{bmatrix}
\hat{r_\dagger}_n\\ \hat{s}_m
\end{bmatrix}-\begin{bmatrix}
r_\dagger\\ s
\end{bmatrix} \right) \xrightarrow[]{D} \left( \sqrt{\lambda}G^1(r_\dagger),\sqrt{1-\lambda}G^2(s)\right),
\end{equation}
where $G^1$ and $G^2$ are independent. In the following, we assume that $r,s\in \text{ri}(\Delta_N)\coloneqq \{ r\in \mathbb{R}^N\, \vert \, \mathbf{1}_N^Tr=1,\, r_i> 0 \}$ the relative interior of $\Delta_N$. Since $r\in\text{ri}(\Delta_N)$, then the central limit law in \eqref{eq:empprocess1} is absolutely continuous with respect to Lebesgue measure. Nevertheless, the limit law in \eqref{eq:empprocess2} is not absolutely continuous since any realization of $G^2(s)$ sums up to zero. Although $G^2(s)$ is a centred Gaussian distribution in $\mathbb{R}^{N}$ its support is included in the $(N-1)$-dimensional hyperplane of vectors orthogonal to $\mathbf 1_N$. Hence, this case requires some care and we deal with it in Lemma \ref{lem:boundaryzero}.

\subsection{Limit Laws for Optimal Transport Couplings}

In this subsection we derive the distributional limit laws for empirical optimal transport couplings. We distinguish these results according to the one- and the two-sample case and whether the assumption \eqref{ass:dualnondegen} is present.

\subsubsection{One-Sample Case}

The one-sample case is derived by a straightforward application of our general theory. Notice that the limit distribution $G(r_\dagger)$ for the marginal $r_\dagger$ is absolutely continuous. 

\begin{cor}[One-Sample]
Consider the optimal transport problem in \eqref{eq:OTprimal} between two probability measures $r,s\in\text{ri}(\Delta_N)$ and suppose assumption \eqref{ass:uniqueOT} is satisfied. Then as $n$ tends to infinity it holds that  
\begin{equation}\label{eq:onesampleOT}
\sqrt{n}\left( \hat{\pi}_n-\pi^\star\right) \xrightarrow[]{D} \sum_\K \mathbbm{1}_{G^1(r_\dagger)\in H_\K\setminus \cup_{k\notin\K}H_k}\,\alpha^\K \otimes \pi(I_\K,G).
\end{equation}
If further assumption \eqref{ass:dualnondegen} holds then the limit reads as
\begin{align*}
\sqrt{n}\left( \hat{\pi}_n-\pi^\star\right) \xrightarrow[]{D} \sum_{k=1}^K \mathbbm{1}_{G^1(r_\dagger)\in H_k\setminus \cup_{j<k}H_j}\, \pi(I_k,G).
\end{align*}
\end{cor}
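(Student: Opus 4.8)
The plan is to recognize the empirical one-sample optimal transport problem as an instance of the stochastic standard linear program of Section~\ref{sec:MainResults} and then read off the two limit laws from Theorems~\ref{thm:limitGeneral} and~\ref{thm:limit}. Concretely, I would pass to the reduced formulation of Definition~\ref{rem:reducedvector} and set $(A,b,c)=(A_\dagger,(r_\dagger,s),c)$, a standard linear program with $A_\dagger$ of full rank $m\coloneqq 2N-1$. In the one-sample case only $r$ is estimated, so the random right-hand side is $b_n=(\hat{r_\dagger}_n,s)$ and only the first $m_0\coloneqq N-1$ coordinates of $b$ fluctuate; this is exactly the regime $m_0<m$ treated in Remark~\ref{rem:m0}, which is why the cones $H_k$ in the statement are the $m_0$-dimensional cones $H_k^{m_0}\subseteq\mathbb R^{N-1}$.

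Next I would check the hypotheses of the two theorems. Assumption~\eqref{ass:uniquexb} is equivalent to~\eqref{ass:uniqueOT} by the discussion preceding that display. Assumption~\eqref{ass:feasforn} holds automatically, in fact with probability one for every $n$: the polytope $\Pi_\dagger(\hat r_n,s)$ always contains the independence coupling $\hat r_n s^T$ and is compact by Lemma~\ref{lem:boundedness1}, so an empirical optimal transport coupling $\hat\pi_n$ always exists. For~\eqref{ass:cltforb} I would invoke the multivariate central limit theorem to obtain $\sqrt n(\hat{r_\dagger}_n-r_\dagger)\xrightarrow[]{D}G^1(r_\dagger)$ with $r_n=\sqrt n$, and then observe that the covariance of $G^1(r_\dagger)$ is the leading $(N-1)\times(N-1)$ block of $\Sigma(r)$ in~\eqref{eq:covmatrix}, namely $\mathrm{diag}(r_\dagger)-r_\dagger r_\dagger^T$, whose determinant equals $r_N\prod_{i=1}^{N-1}r_i>0$ precisely because $r\in\mathrm{ri}(\Delta_N)$; hence $G^1(r_\dagger)$ is absolutely continuous with respect to Lebesgue measure on $\mathbb R^{N-1}$, and the weak limit of $r_n(b_n-b)$ on $\mathbb R^m$ is $G=(G^1(r_\dagger),0_N)$, of the required form $(G^{m_0},0_{m-m_0})$.

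With the assumptions in place, the first display is a direct application of Theorem~\ref{thm:limitGeneral}; here I would simply note that the joint convergence of $(\alpha_n^\K,G_n)$ to $(\alpha^\K,G)$ is a hypothesis the corollary inherits from that theorem and which, as discussed in Section~\ref{sec:Assumptions}, is met for any deterministic or randomized pivoting rule of the solver in use. The second display follows from Theorem~\ref{thm:limit}: under~\eqref{ass:dualnondegen} no joint-convergence assumption is needed, the relevant subsets $\K$ all reduce to singletons, and the sum collapses to the stated form indexed by $k\in[K]$, with the transport matrices $\pi(I_k,G)$ playing the role of $x(I_k,G)$.

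I do not anticipate a deep obstacle, since the corollary is essentially a specialization of the general theory. The one point needing genuine care is the nonsingularity of the covariance in the verification of~\eqref{ass:cltforb}: this is where the assumption $r\in\mathrm{ri}(\Delta_N)$ is essential, and it is what supplies the absolute continuity of $G^1(r_\dagger)$ that Theorem~\ref{thm:limitGeneral} requires. A secondary bookkeeping point is maintaining the distinction between the full weak limit $G=(G^1(r_\dagger),0_N)\in\mathbb R^{2N-1}$, which enters the transport matrices $\pi(I_\K,G)$, and its $m_0$-dimensional part $G^1(r_\dagger)\in\mathbb R^{N-1}$, which enters the indicator functions over the cones $H_k^{m_0}$.
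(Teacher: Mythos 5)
Your proposal is correct and takes the same route the paper itself intends: the paper offers no explicit proof of this corollary beyond the one-line remark that it ``is derived by a straightforward application of our general theory'' together with the observation that $G^1(r_\dagger)$ is absolutely continuous, and you simply spell out that application in full, verifying \eqref{ass:uniquexb} via \eqref{ass:uniqueOT}, \eqref{ass:feasforn} via compactness and nonemptiness of $\Pi_\dagger(\hat r_n,s)$, and \eqref{ass:cltforb} via the multivariate CLT with the nonsingular $\mathrm{diag}(r_\dagger)-r_\dagger r_\dagger^T$ block, and then invoking Theorem~\ref{thm:limitGeneral} (respectively Theorem~\ref{thm:limit}) with the $m_0=N-1<m=2N-1$ reduction of Remark~\ref{rem:m0}. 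The only thing worth flagging is presentational rather than a gap: you correctly note that the joint convergence of $(\alpha_n^\K,G_n)$ is an implicit hypothesis inherited from Theorem~\ref{thm:limitGeneral} for the first display, which the corollary's statement does not restate but which your reading faithfully preserves.
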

%

\subsubsection{Two-sample Case and Extensions}\label{subsec:twosample}

The two-sample case, where both marginals $r,s\in\text{ri}(\Delta_N)$ are estimated, presents an additional challenge in that the underlying empirical process in \eqref{eq:empprocess2} converges in distribution to a degenerate Gaussian distribution (see discussion at the end of Subsection \ref{subsec:assumptionsOT}). Absolute continuity of the limiting random variable is required for the general theory (without assumption \eqref{ass:dualnondegen}) in order to show that the boundaries of the cones $H_k$ have probability zero. Fortunately, the structure of the optimal transport problem allows to reach the same conclusion, as established in Lemma \ref{lem:boundaryzero}. The two-sample case then reads as follows.

\begin{thm}[Two-sample Case]\label{thm:limitforOT}
Consider the optimal transport problem \eqref{eq:OTprimal} between two probability measures $r,s\in\text{ri}(\Delta_N)$ and suppose that assumption \eqref{ass:uniqueOT} holds. Then as $m\wedge n$ tends to infinity such that $\frac{m}{n+m}\to \lambda\in(0,1)$ it holds that 
\begin{equation*}
\sqrt{\frac{nm}{n+m}}\left( \hat{\pi}_{n,m}-\pi^\star\right) \xrightarrow[]{D} \sum_\K \mathbbm{1}_{G\left(r_\dagger,s\right)\in H_\K\setminus \cup_{k\notin\K}H_k}\,\alpha^\K \otimes \pi(I_\K,G\left(r_\dagger,s\right)),
\end{equation*}
where $G(r_\dagger,s)=(\sqrt{\lambda}G^1(r_\dagger),\sqrt{1-\lambda}G^2(s))$ is a centred Gaussian distribution with block diagonal covariance matrix, where the blocks are given in \eqref{eq:covmatrix}.\\
If further assumption \eqref{ass:dualnondegen} holds then the limit law simplifies to
\begin{equation*}
\sqrt{\frac{nm}{n+m}}\left( \hat{\pi}_{n,m}-\pi^\star\right) \xrightarrow[]{D} \sum_{k=1}^K \mathbbm{1}_{G\left(r_\dagger,s\right)\in H_k\setminus \cup_{j<k}H_j}\, \pi(I_k,G\left(r_\dagger,s\right)).
\end{equation*}
\end{thm}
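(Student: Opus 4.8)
The plan is to realize the two-sample problem as a stochastic linear program in standard form and then re-run the argument of Section~\ref{sec:Proofs}, the only modification occurring at the single point where absolute continuity of the limiting vector was invoked. Concretely, put $m=m_0=2N-1$ and $d=N^2$, let $A=A_\dagger$ be the reduced node--arc incidence matrix of Definition~\ref{rem:reducedvector}, let $c$ be the vectorized cost, and take $b=(r_\dagger,s)$ with empirical counterpart $b_{n,m}=(\hat{r_\dagger}_n,\hat{s}_m)$ and $x(I,\cdot)=\pi(I,\cdot)$ in the optimal transport notation. Assumptions \eqref{ass:optBounded} and \eqref{ass:uniquexb} hold because \eqref{ass:uniqueOT} is assumed, and \eqref{ass:feasforn} is automatic since $\Pi_\dagger(\hat r_n,\hat s_m)$ is always non-empty and compact, so an optimal coupling always exists. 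The central limit theorem \eqref{eq:empprocess2} gives $G_n:=r_n(b_{n,m}-b)\xrightarrow[]{D}G=G(r_\dagger,s)$ with $r_n=\sqrt{nm/(n+m)}\to\infty$. The obstacle is that, although $m_0=m$, the limit $G$ is \emph{not} absolutely continuous on $\R^{2N-1}$: the $s$-block $G^2(s)$ is concentrated on $\{v\in\R^N:\mathbf{1}_N^Tv=0\}$, so $G$ is supported on the proper subspace $\mathcal S:=\R^{N-1}\times\{v\in\R^N:\mathbf{1}_N^Tv=0\}\subset\R^{2N-1}$, and \eqref{ass:cltforb} as literally stated fails.

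For the general statement I would follow Section~\ref{sec:Proofs} verbatim up to the Portmanteau step: partition $\Omega$ into the events $A_n^\K$, discard via Lemma~\ref{lem:feas} the bases $I_k$ with $k>K$, reduce as in \eqref{eq:proofsumm} to studying $\sum_{\emptyset\subset\K\subseteq[K]}\mathbbm{1}_{A_n^\K}\,\alpha_n^\K\otimes x(I_\K,G_n)$, and rewrite each $A_n^\K$ in terms of the closed convex cones $H_k\subseteq\R^{2N-1}$ of \eqref{eq:Hk}, obtaining $A_n^\K=\{G_n\in H_\K\setminus\cup_{k\notin\K}H_k\}+o_\PR(1)$. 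None of these steps uses absolute continuity --- only weak consistency of $b_{n,m}$ together with the continuous mapping theorem --- so they transfer unchanged.

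The single place where absolute continuity entered the proof of Theorem~\ref{thm:limitGeneral} is the application of the Portmanteau theorem to pass from $\PR(A_n^\K)=\PR(G_n\in H_\K\setminus\cup_{k\notin\K}H_k)+o(1)$ to $\mathbbm{1}_{A_n^\K}\xrightarrow[]{D}\mathbbm{1}_{G\in H_\K\setminus\cup_{k\notin\K}H_k}$, which required the boundary $\partial(H_\K\setminus\cup_{k\notin\K}H_k)\subseteq\partial H_\K\cup\bigcup_{k\notin\K}\partial H_k$ to be $G$-null. Here I replace the reasoning ``Lebesgue-null $\Rightarrow$ $G$-null'' by Lemma~\ref{lem:boundaryzero}, whose content is exactly that, for the node--arc incidence structure, each of the finitely many boundary hyperplanes $\{v:[A_{I_k}^{-1}v]_j=0\}$, $j\in J_k$, meets $\mathcal S$ in a set of $\mathcal S$-Lebesgue measure zero; equivalently, $\mathcal S$ is contained in none of them, so that $G$ --- which is absolutely continuous \emph{within} $\mathcal S$ --- assigns zero mass to each $\partial H_k$ and hence to $\partial(H_\K\setminus\cup_{k\notin\K}H_k)$. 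With this substitution, the remainder of the argument in the proof of Theorem~\ref{thm:limitGeneral} goes through: the maps $T^\K$ there are continuous off the $G$-null boundaries, and, assuming as in Theorem~\ref{thm:limitGeneral} the joint convergence $(\alpha_n^\K,G_n)\xrightarrow[]{D}(\alpha^\K,G)$, the continuous mapping theorem delivers the stated limit with the cones $H_k$ of \eqref{eq:Hk}. The main obstacle is therefore Lemma~\ref{lem:boundaryzero}, i.e.\ verifying that the degenerate support subspace $\mathcal S$ is not contained in any of the hyperplanes cutting out the $H_k$.

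Finally, the simplified limit law under \eqref{ass:dualnondegen} is immediate from Theorem~\ref{thm:limit} rather than Theorem~\ref{thm:limitGeneral}: as noted in the Remark following Theorem~\ref{thm:limit}, that result imposes \emph{no} regularity on the limiting vector $G$ --- in particular it needs neither absolute continuity nor the joint convergence of $(\alpha_n^\K,G_n)$ --- so only the convergence-in-distribution part of \eqref{ass:cltforb} is used. Hence, once \eqref{ass:optBounded}, \eqref{ass:uniquexb}, \eqref{ass:feasforn} are checked as above, \eqref{ass:dualnondegen} holds by hypothesis, and $G_n\xrightarrow[]{D}G(r_\dagger,s)$ by \eqref{eq:empprocess2}, Theorem~\ref{thm:limit} applies directly and yields the claimed simplified limit.
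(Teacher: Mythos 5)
Your proposal is correct and follows the paper's own route: reduce the two-sample problem to the standard-form setup with $A_\dagger$, $b=(r_\dagger,s)$ and $m=m_0=2N-1$, observe that every step of the proof of Theorem~\ref{thm:limitGeneral} transfers except for the Portmanteau/continuous-mapping step (the only place that absolute continuity of $G$ is used), and patch that step with Lemma~\ref{lem:boundaryzero} to show $\PR\bigl(G(r_\dagger,s)\in\partial H_k\bigr)=0$ despite $G$ living on the proper subspace $\R^{N-1}\times\{\mathbf 1_N^Tv=0\}$. Your additional observation that the simplified limit under \eqref{ass:dualnondegen} follows directly from Theorem~\ref{thm:limit} --- which imposes no regularity or joint-convergence hypothesis on $G$ --- is a slight streamlining that the paper's one-paragraph proof leaves implicit.
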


\begin{proof}
The proof follows the same lines as the proof of Theorem \ref{thm:limitGeneral}. The last step of the proof (see Section \ref{subsubsec:proofmain}) relying on the continuous mapping theorem requires the boundary $\partial(H_\mathcal{K}\setminus \cup_{k\notin \mathcal{K}} H_k) \subseteq \mathbb{R}^{N-1}$ for $\mathcal{K}\subseteq [K]$ to have $G$-measure zero. Notice that again by the union bound
\begin{equation*}
\begin{split}
\PR\left(G\in \partial(H_\mathcal{K}\setminus \cup_{k\notin \mathcal{K}} H_k)\right)&\leq \PR\left(G(r_\dagger,s)\in \partial H_{\mathcal{K}}\cup \partial \left(\cup_{k\notin \mathcal{K}}H_k\right)\right)\\
&\leq \sum_{k=1}^K\PR\left( G(r_\dagger,s)\in \partial H_k \right)=0,
\end{split}
\end{equation*}
where the last equality follows by Lemma \ref{lem:boundaryzero}.
\end{proof}

\begin{exm}[Example \ref{exm:OT} continued]
We revisit the optimal transport instance from Example \ref{exm:OT} and restrict to $p=1$. This choice of $p$ causes the nondegeneracy assumption \eqref{ass:dualnondegen} to fail and hence illustrates the most complicated situation of our theory. We further assume the two probability vectors $r$ and $s$ to be equal and strictly positive. The transport problem attains a unique degenerate solution supported on the diagonal, i.e., all the mass remains at its current location. A straightforward computation yields $K=8$ primal and dual optimal bases
\begin{center}
\begin{small}
\begin{tabular}{cccc}
$TS(I_1)=\begin{pmatrix}
\ast & \ast \\
&\ast&\\
&\ast&\ast
\end{pmatrix}$, & $TS(I_2)=\begin{pmatrix}
\ast &  \\
\ast&\ast&\ast\\
&&\ast
\end{pmatrix}$, & $TS(I_3)=\begin{pmatrix}
\ast & & \\
\ast & \ast& \\
\ast & &\ast
\end{pmatrix}$, & $TS(I_4)=\begin{pmatrix}
\ast & &\ast \\
 & \ast&\ast \\
 & &\ast
\end{pmatrix}$, \\ 
$TS(I_5)=\begin{pmatrix}
\ast & \ast & \ast \\
&\ast&\\
&&\ast
\end{pmatrix}$, & $TS(I_6)=\begin{pmatrix}
\ast &&  \\
&\ast&\\
\ast&\ast&\ast
\end{pmatrix}$, & $TS(I_7)=\begin{pmatrix}
\ast &  \\
\ast&\ast&\\
&\ast&\ast
\end{pmatrix}$, & $TS(I_8)=\begin{pmatrix}
\ast & \ast \\
&\ast&\ast\\
&&\ast
\end{pmatrix}$. \\
\end{tabular} 
\end{small}
\end{center}
For example, the transport scheme $TS(I_1)$ corresponds to basis $I_1=\{1,2,5,8,9\}$ and induces an invertible matrix $A_{\dagger I_1}$. The respective closed convex cones $H_k$ for $1\leq k\leq K$ as defined in \eqref{eq:Hk} are
\begin{small}
\begin{align*}
H_{1}&=\left\lbrace v\in \mathbb{R}^5\, \mid \, v_1\geq v_3,\, v_1+v_2\leq v_3+v_4\right\rbrace,\quad H_{2}=\left\lbrace v\in \mathbb{R}^5\, \mid \, v_1\leq v_3,\, v_1+v_2\geq v_3+v_4\right\rbrace,\\
H_{3}&=\left\lbrace v\in \mathbb{R}^5\, \mid \, v_2\geq v_4,\, v_1+v_2\leq v_3+v_4\right\rbrace,\quad H_{4}=\left\lbrace v\in \mathbb{R}^5\, \mid \, v_1\geq v_3,\, v_2\geq v_4\right\rbrace,\\
H_{5}&=\left\lbrace v\in \mathbb{R}^5\, \mid \, v_2\leq v_4,\, v_1+v_2\geq v_3+v_4\right\rbrace,\quad H_{6}=\left\lbrace v\in \mathbb{R}^5\, \mid \, v_1\leq v_3,\, v_2\leq v_4\right\rbrace,\\
H_{7}&=\left\lbrace v\in \mathbb{R}^5\, \mid \, v_1\leq v_3,\, v_1+v_2\leq v_3+v_4\right\rbrace, \quad H_{8}=\left\lbrace v\in \mathbb{R}^5\, \mid \, v_1\geq v_3,\, v_1+v_2\geq v_3+v_4\right\rbrace.
\end{align*}
\end{small}
Notice that each of these cones is an intersection of two proper half-spaces, respectively. Since assumption \eqref{ass:dualnondegen} fails to hold, some of these cones exhibit non-trivial intersections. Such cases arise for the pairs $\{I_3,I_7\}$, $\{I_6,I_7\}$, $\{I_4,I_8\}$ and $\{I_5,I_8\}$. The intersections of the corresponding cones are given by 
\begin{small}
\begin{align*}
H_3\cap H_7&=\left\lbrace v\in \mathbb{R}^5\, \mid \, v_2\geq v_4,\, v_1+v_2\leq v_3+v_4\right\rbrace,
&H_6\cap H_7=\left\lbrace v\in \mathbb{R}^5\, \mid \, v_1\leq v_3,\, v_2\leq v_4\right\rbrace,\\
H_5\cap H_8&=\left\lbrace v\in\mathbb{R}^5\, \mid \, v_2\leq v_4, \, v_1+v_2\geq v_3+v_4 \right\rbrace,
&H_4\cap H_8=\left\lbrace v\in \mathbb{R}^5\,\mid\, v_1\geq v_3,\, v_2\geq v_4 \right\rbrace.
\end{align*}
\end{small}
Together with the marginal weak distributional limit in \eqref{eq:empprocess2} the limit law for $p=1$ and $r=s$ reads 
\begin{equation*}
M(\mathbf{G})=\sum_{\K\in\{\{1\},\{2\}\{3,7\},\{6,7\},\{4,8\},\{5,8\}\}} \mathbbm{1}_{\mathbf{G}\in H_\K\setminus\cup_{k\notin\K}H_k}\,\alpha^\K \otimes x(I_\K,G).
\end{equation*}
A more detailed analysis, also illustrating the Hausdorff distance result in Theorem \ref{thm:Hausdorff}, is given in Appendix \ref{Appendix:FurtherIllustration}.
\end{exm}

The stated limit laws for empirical optimal transport couplings allow to conclude for distributional limit laws for various functionals thereof. A particular example has already been illustrated in Section \ref{subsec:Limitvalue} for the optimal value (see also \cite{sommerfeld2018inference} for its statistical consequences). Here, we give a number of other examples of such functionals in the optimal transport context.

\textbf{Optimal Transport Curve.} For an optimal transport coupling $\pi(r,s)$ the optimal transport curve (OTC) is defined as the function $OTC_{\pi(r,s)}:[0,1]\to[0,1]$ with
\begin{equation}
OTC_{\pi(r,s)}(t)
=\sum_{i=1}^{N^2}\pi(r,s)_i\mathbbm{1}\{c_i\le t\}.
\end{equation}
For $t\geq 0$ the value $OTC_{\pi(r,s)}(t)$ is the amount of mass that needs to travel a distance smaller than $t$ under the optimal transport coupling between the two probability measures $r,s$. Similarly, for entropically regularized transport $\pi_\lambda(r,s)$, this function has recently been introduced by \cite{klatt2020empirical} for colocalization analysis as a measure to quantify spatial proximity of protein interaction networks. 
The corresponding statistical analysis is based on a central limit theorem, and the limit is Gaussian in this case, which is also a consequence of uniqueness of the optimal solution, as the regularization makes the optimization strictly convex for $\lambda>0$. We demonstrate here that the limit law for $OTC$ based on unregularized transport couplings follows from our theory. Notice that since $OTC$ is right-continuous and nondecreasing, it can be embedded in the Banach space $\mathcal B$ of c\`adl\`ag functions on $[0,1]$ endowed with the supremum norm. We consider here the case that both marginals $r,s\in\Delta_N$ are unknown and hence estimated by their empirical counterparts $\hat{r}_n,\hat{s}_m\in \Delta_N$; analogous results for the one-sample case can be obtained similarly under appropriate assumptions.

\begin{thm}
Let $OTC_{\pi(r,s)}$ denote the optimal transport curve based on the optimal transport plan $\pi(r,s)$ and suppose that assumption \eqref{ass:uniqueOT} holds. Then under the same setting as Theorem \ref{thm:limitforOT} it holds that
\begin{equation*}
\sqrt{\frac{nm}{n+m}}\left(OTC_{\pi(\hat{r}_n,\hat{s}_m)} - OTC_{\pi_{(r,s)}}\right)
\xrightarrow[]{D}  OTC_M
\textrm{ in }\mathcal B,
\end{equation*}
where $M$ is given as in Theorem \ref{thm:limitforOT}.
\end{thm}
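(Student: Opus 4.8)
The plan is to express $OTC_{\pi(r,s)}$ as the image of the coupling vector $\pi(r,s)\in\R^{N^2}$ under a fixed bounded linear map into the Banach space $\mathcal B$, and then invoke Theorem~\ref{thm:limitforOT} together with the continuous mapping theorem. Concretely, define $\Phi\colon\R^{N^2}\to\mathcal B$ by $(\Phi v)(t)=\sum_{i=1}^{N^2}v_i\mathbbm 1\{c_i\le t\}$. Each function $t\mapsto\mathbbm 1\{c_i\le t\}$ is a fixed c\`adl\`ag step function, so $\Phi$ is well-defined and linear; moreover $\|\Phi v\|_\infty\le\sum_i|v_i|\le\sqrt{N^2}\,\|v\|_2$, so $\Phi$ is bounded, hence continuous. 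Observe that $OTC_{\pi(r,s)}=\Phi(\pi^\star)$ and $OTC_{\pi(\hat r_n,\hat s_m)}=\Phi(\hat\pi_{n,m})$, the latter using the convention (valid with probability tending to one by assumption \eqref{ass:feasforn}, which holds automatically here since $\Pi_\dagger(\hat r_n,\hat s_m)$ is always nonempty and compact) that $\hat\pi_{n,m}$ is a genuine optimal coupling. By linearity,
\begin{equation*}
\sqrt{\tfrac{nm}{n+m}}\left(OTC_{\pi(\hat r_n,\hat s_m)}-OTC_{\pi(r,s)}\right)
=\Phi\!\left(\sqrt{\tfrac{nm}{n+m}}\,(\hat\pi_{n,m}-\pi^\star)\right).
\end{equation*}

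**Carrying it out.** First I would verify the measurability/embedding point: $\mathcal B$ with the supremum norm is a Banach space, $\Phi$ maps into it, and the composition of a weakly convergent sequence with a continuous map converges weakly. Second, Theorem~\ref{thm:limitforOT} gives $\sqrt{nm/(n+m)}(\hat\pi_{n,m}-\pi^\star)\xrightarrow{D}M$ in $\R^{N^2}$, where $M$ is the (possibly random) limit described there. Third, apply the continuous mapping theorem (Theorem~2.7 in \citet{billingsley1999convergence}, valid for continuous maps between metric spaces): since $\Phi$ is continuous everywhere, $\Phi$ has empty discontinuity set, so
\begin{equation*}
\Phi\!\left(\sqrt{\tfrac{nm}{n+m}}\,(\hat\pi_{n,m}-\pi^\star)\right)\xrightarrow{D}\Phi(M)=OTC_M\quad\text{in }\mathcal B,
\end{equation*}
where we define $OTC_M\coloneqq\Phi(M)$, i.e.\ $t\mapsto\sum_{i=1}^{N^2}M_i\mathbbm 1\{c_i\le t\}$. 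If \eqref{ass:dualnondegen} also holds one simply substitutes the simplified form of $M$ from the second display of Theorem~\ref{thm:limitforOT}; the argument is identical. This completes the proof.

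**Main obstacle.** The argument is almost entirely formal, so there is no deep obstacle; the one point requiring a word of care is that $OTC$ is being viewed as an element of the \emph{non-separable} space of c\`adl\`ag functions under the sup norm, and one should make sure weak convergence there is unambiguous. This is handled by noting that $\Phi$ actually takes values in the finite-dimensional subspace spanned by the finitely many step functions $\{\mathbbm 1\{c_i\le\cdot\}\}_{i=1}^{N^2}$ (there are at most $N^2$ distinct thresholds $c_i$), so the convergence effectively takes place in a Euclidean space and all the usual separability caveats evaporate; equivalently, $\Phi$ factors through $\R^{N^2}$ composed with a fixed linear isometry onto that subspace, and weak convergence is preserved under such maps. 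A secondary, purely bookkeeping step is passing from the "$\hat\pi_{n,m}$ may fail to exist" null event to the honest statement, but this is absorbed exactly as in the proof of Theorem~\ref{thm:limitGeneral} via an $o_\PR(1)$ term.
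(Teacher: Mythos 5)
Your proof is correct and follows the same route as the paper's: express $OTC$ as a bounded linear map from $\R^{N^2}$ into $\mathcal B$ and apply the continuous mapping theorem to the conclusion of Theorem~\ref{thm:limitforOT}. Your added remarks on boundedness, the finite-dimensional range, and the separability issue are sound elaborations of what the paper compresses into ``linear with finite-dimensional domain and hence continuous.''
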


\begin{proof}
The mapping $OTC:\R^{N^2}\to \mathcal B$ is linear with finite-dimensional domain and hence continuous. The continuous mapping theorem yields
\begin{equation*}
\sqrt{\frac{nm}{n+m}}\left(OTC_{\pi(\hat{r}_n,\hat{s}_m)} - OTC_{\pi_{(r,s)}}\right)
= OTC_{\sqrt{\frac{nm}{n+m}}\left(\pi(\hat{r}_n,\hat{s}_m) - \pi_{(r,s)}\right)}
\xrightarrow[]{D} OTC_M
\end{equation*}
in the space $\mathcal B$.
\end{proof}

\textbf{Trace of a Transport Coupling.} For a matrix $A$ we denote by $tr[A]$ the trace of $A$, i.e., the sum of all diagonal elements. The trace of the transport coupling represents how much mass stays at place.
\begin{thm}
Let $\pi(\hat{r}_n,\hat{s}_m)$ and $\pi_{(r,s)}$ denote the empirical and true optimal transport coupling. Under the conditions of Theorem \ref{thm:limitforOT} it holds that
\begin{equation*}
\sqrt{\frac{nm}{n+m}}(\tr[\pi(\hat{r}_n,\hat{s}_m)] - \tr[\pi(r,s)])
\xrightarrow[]{D}
tr[M].
\end{equation*}
\end{thm}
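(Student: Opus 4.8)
The plan is to observe that the trace is a fixed linear functional of the (vectorized) coupling and then invoke the continuous mapping theorem on the weak limit already established in Theorem~\ref{thm:limitforOT}. Concretely, identifying a coupling $\pi\in\R^{N\times N}$ with its vectorization in $\R^{N^2}$ via $\pi_{ij}\mapsto \pi_{(i-1)N+j}$ as in Section~\ref{sec:OT}, one has $\tr[\pi]=\sum_{i=1}^N \pi_{ii}=\sum_{i=1}^N \pi_{(i-1)N+i}$, i.e.\ $\tr=\langle e,\cdot\rangle$ for the fixed $0/1$ vector $e\in\R^{N^2}$ that selects the diagonal coordinates. Thus $\tr:\R^{N^2}\to\R$ is linear on a finite-dimensional space and hence continuous everywhere.

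First I would recall from Theorem~\ref{thm:limitforOT} that, under assumption~\eqref{ass:uniqueOT} and with $\frac{m}{n+m}\to\lambda\in(0,1)$,
\[
\sqrt{\tfrac{nm}{n+m}}\bigl(\pi(\hat r_n,\hat s_m)-\pi(r,s)\bigr)\xrightarrow[]{D} M
\]
in $\R^{N^2}$, with $M$ the (possibly random, cone-indexed) limit given there. Since $\tr$ is continuous, the continuous mapping theorem applies directly: because $\tr$ is moreover linear, it commutes with the centering and scaling, so
\[
\sqrt{\tfrac{nm}{n+m}}\bigl(\tr[\pi(\hat r_n,\hat s_m)]-\tr[\pi(r,s)]\bigr)
=\tr\!\left[\sqrt{\tfrac{nm}{n+m}}\bigl(\pi(\hat r_n,\hat s_m)-\pi(r,s)\bigr)\right]
\xrightarrow[]{D}\tr[M],
\]
which is the claimed statement.

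There is essentially no obstacle here: the only point requiring a word of care is the bookkeeping that the diagonal entries $\pi_{ii}$ do correspond, after removing the $N$-th marginal row in the reduced formulation (Definition~\ref{rem:reducedvector}), to genuine coordinates of the vector $\pi\in\R^{N^2}$ on which $M$ lives, so that $\tr$ is indeed a well-defined continuous functional of the limiting object $M$ appearing in Theorem~\ref{thm:limitforOT}; this is immediate from the vectorization convention fixed in Section~\ref{sec:OT}. The same argument, applied to the one-sample limit of the corresponding corollary, yields the analogous statement in the one-sample case.
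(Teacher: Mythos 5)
Your argument is correct and is exactly the paper's approach: the paper's proof is the one-liner ``By linearity of the trace,'' and you simply spell out the same reasoning (trace is a fixed linear, hence continuous, functional on $\R^{N^2}$, so linearity lets it commute with centering/scaling and the continuous mapping theorem applied to Theorem~\ref{thm:limitforOT} gives the result).
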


\begin{proof}
By linearity of the trace.
\end{proof}
\textbf{Geodesic between Probability Measures.} When $\mathcal{X}$ is a subset of a Banach space and the cost function associated with the optimal transport is $c(x,y)=\|x-y\|^p$ for $p>1$, the optimal transport coupling $\pi(r,s)$ defines a geodesic in Wasserstein space between its
marginals \citep{mccann1997convexity}, known as McCann's interpolant. It is defined by
\begin{equation*}
Geod_{\pi(r,s)}(t)
=[(1-t)\proj_1 + t\proj_2]\#\pi(r,s),
\end{equation*}
where $\proj_1(x,y)=x$ and $\proj_2(x,y)=y$ for $x,y\in \mathcal X$ are the respective projection maps and $[f\#\pi(r,s)](A)=\pi(r,s)(f^{-1}(A))$ is the push-forward for $f$ a measurable map and $A$ a measurable subset of the range of $f$ \cite[Theorem 5.27]{santambrogio2015optimal}. The empirical transport coupling $\pi(\hat{r}_n,\hat{s}_m)$ provides an approximate geodesic denoted by $Geod_{\pi(\hat{r}_n,\hat{s}_m)}$. The geodesics are viewed as elements of the Banach space $\mathcal C=C([0,1],\mathcal M(\textrm{Conv}(\mathcal X)))$ of continuous functions from $[0,1]$ to the space of finite signed Borel measures on the convex hull of $\mathcal X$ endowed with the Bounded Lipschitz norm \eqref{eq:BLnorm}.
\begin{thm}
Let $\pi(\hat{r}_n,\hat{s}_m)$ and $\pi_{(r,s)}$ denote the empirical and true optimal transport coupling. Under the conditions of Theorem \ref{thm:limitforOT} it holds that
\begin{equation*}
r_m\left(Geod_{\pi(\hat{r}_n,\hat{s}_m)} - Geod_{\pi(r,s)}\right)
\xrightarrow[]{D}
Geod_M
\textrm{ in }\mathcal C
.
\end{equation*}
\end{thm}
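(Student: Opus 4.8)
The plan is to follow the same strategy as in the proofs of the two preceding theorems (on the optimal transport curve and on the trace): realize $\pi\mapsto Geod_\pi$ as a bounded linear map from the finite-dimensional space $\R^{N^2}$ into the Banach space $\mathcal C$, and then combine Theorem~\ref{thm:limitforOT} with the continuous mapping theorem.

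First I would record that, identifying a vector $\pi\in\R^{N^2}$ with the (signed) measure $\sum_{i,j}\pi_{ij}\delta_{(x_i,x_j)}$ on $\mathcal X\times\mathcal X$, the push-forward $[(1-t)\proj_1+t\proj_2]\#\pi=\sum_{i,j}\pi_{ij}\,\delta_{(1-t)x_i+tx_j}$ depends linearly on $\pi$ for each fixed $t\in[0,1]$, so that $\pi\mapsto Geod_\pi$ is linear. Next I would check that its image lies in $\mathcal C$: for every $\varphi\in\BL(\Conv(\mathcal X))$ one has $\|\varphi\|_\infty\le1$ and $\|\varphi\|_{\Lip}\le1$, whence $\|Geod_\pi(t)\|_{\BL}\le\|\pi\|_1$ uniformly in $t$ and $\|Geod_\pi(t)-Geod_\pi(t')\|_{\BL}\le|t-t'|\,\|\pi\|_1\,\mathrm{diam}(\mathcal X)$, so $t\mapsto Geod_\pi(t)$ is (Lipschitz) continuous and $\|Geod_\pi\|_{\mathcal C}\le\|\pi\|_1$. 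Being linear with finite-dimensional domain, $\pi\mapsto Geod_\pi$ is therefore bounded, hence continuous and Borel measurable; moreover all random elements at hand take values in the at most $N^2$-dimensional, hence separable, linear image of $Geod$, so no measurability pathology arises in the application of the continuous mapping theorem.

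With this in hand, Theorem~\ref{thm:limitforOT} gives $r_m\bigl(\pi(\hat r_n,\hat s_m)-\pi(r,s)\bigr)\xrightarrow[]{D} M$ in $\R^{N^2}$, where $r_m$ denotes the standardization of that theorem. By linearity of $Geod$ one has $r_m\bigl(Geod_{\pi(\hat r_n,\hat s_m)}-Geod_{\pi(r,s)}\bigr)=Geod_{r_m(\pi(\hat r_n,\hat s_m)-\pi(r,s))}$, and applying the continuous map $\pi\mapsto Geod_\pi$ together with the continuous mapping theorem yields the asserted convergence to $Geod_M$ in $\mathcal C$.

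I do not expect a genuine obstacle here; the only steps requiring (routine) verification are the continuity of $t\mapsto Geod_\pi(t)$ in the bounded Lipschitz metric and the finiteness of the operator norm of $Geod$, both of which are immediate from the constraints $\|\varphi\|_\infty\le1$, $\|\varphi\|_{\Lip}\le1$ defining $\BL$ together with $|\mathcal X|=N<\infty$. One should also keep in mind that the geometric interpretation of $Geod_\pi$ as McCann's interpolant presupposes $p>1$ and a norm cost, but the defining formula---and therefore the distributional limit---makes sense for the optimal coupling under \eqref{ass:uniqueOT} in general.
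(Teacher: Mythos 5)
Your proof is correct and follows essentially the same route as the paper: linearity of $\pi\mapsto Geod_\pi$ from the finite-dimensional space $\R^{N^2}$ into $\mathcal C$, hence continuity, and then the continuous mapping theorem applied to the conclusion of Theorem~\ref{thm:limitforOT}. You additionally verify a few points the paper leaves implicit — namely that $t\mapsto Geod_\pi(t)$ is Lipschitz in the bounded Lipschitz norm (so the map really lands in $\mathcal C$) and that the operator norm is finite — which is a welcome but routine tightening of the same argument.
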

\begin{proof}
For fixed $t$ the function $\pi\mapsto Geod_{\pi(r,s)}(t)$ is linear.  Therefore $\pi\mapsto Geod_{\pi(r,s)}$ is linear from $\R^{N^2}$ to $\mathcal C$ and is thus continuous. The continuous mapping theorem applies.
\end{proof}

\section*{Acknowledgments}
M.\ Klatt and A.\ Munk gratefully acknowledge support from the DFG Research Training Group 2088 \textit{Discovering structure in complex data: Statistics meets Optimization and Inverse Problems}. Y.\ Zemel was supported in part by Swiss National Science Foundation Grant 178220, an in part by a U.K.\ Engineering and Physical Sciences Research Council programme grant.

\bibliographystyle{plainnat.bst}
\bibliography{Sections/main.bib}{}

\appendix

\section{Linear Programs and Duality}\label{Appendix:LinearProgramProofs}

This section contains the proofs of statements from Section \ref{sec:Preliminaries}. The results are well-known, but their proofs are not always easy to find in a form convenient for the present paper. We therefore provide the proofs for the sake of completeness.

\begin{proof}[Lemma \ref{lem:rulesOnly}]\label{proof:rulesOnly}
Suppose the feasible set $P(\tilde{b})\coloneqq \left\{x\in\R^d \, \mid\, Ax=\tilde{b},\, x\geq 0\right\}$ is non-empty, else there is nothing to prove. According to condition \eqref{ass:optBounded} the set $OPT(b)$ is bounded. In particular, this implies that the only vector $h$ such that $Ah=0$ and $c^Th\leq 0$ is equal the zero vector, i.e., the recession cone of the feasible set is singleton \cite[Section 4.8]{bertsimas1997introduction}. Hence, the primal program $(\text{P}_{\tilde{b}})$ attains a finite optimal solution and so does its dual by Theorem \ref{thm:strongduality}. Therefore, the non-empty set $\{c^Tx\, \mid \,x\in P(\tilde{b})\}$ is bounded below and has a minimum $\tilde{c}=c(\tilde{b})$. For arbitrary vector $a\in \mathbb{R}^d$ consider the following linear program and its dual

\medskip
\begin{minipage}{0.45\textwidth}
\begin{equation}\label{proof:primalp}
\begin{aligned}
\min\limits_{x\geq 0}{} \quad
a^T x& \\
\text{s.t.} \quad
Ax = 0, \, &c^Tx=0, 
\end{aligned}\tag{$\text{P}^1$}
\end{equation}
\end{minipage}
\begin{minipage}{0.45\textwidth}
\begin{equation}\label{proof:dualp}
\begin{aligned}
\max_{\lambda\in\mathbb{R}^m,\alpha\in\mathbb{R}} \quad
0^T\lambda + 0\alpha & \\
\text{s.t.} \quad
A^T\lambda+\alpha c \leq a&.
\end{aligned}\tag{$\text{D}^1$}
\end{equation}
\end{minipage}

As \eqref{proof:primalp} attains a finite optimal value we conclude by strong duality (see Theorem \ref{thm:strongduality}) that the dual program \eqref{proof:dualp} attains an optimal solution. In particular, the set $\{(\lambda,\alpha)\in\R^{m}\times \R:A^T\lambda + \alpha c\leq a\}$ is non-empty for all $a\in\R^d$. Hence, for $j\in [d]$ consider $a=-e_j$, the negative $j$-th unit vector, and the related primal and dual linear programs

\medskip
\begin{minipage}{0.45\textwidth}
\begin{equation}\label{proof:primalpp}
\begin{aligned}
\min\limits_{x\geq 0}{} \quad
-x_j& \\
\text{s.t.} \quad
Ax = \tilde{b}, \, &c^Tx=\tilde{c}, 
\end{aligned}\tag{$\text{P}^2$}
\end{equation}
\end{minipage}
\begin{minipage}{0.45\textwidth}
\begin{equation}\label{proof:dualpp}
\begin{aligned}
\max_{\lambda\in\mathbb{R}^m,\alpha\in\mathbb{R}} \quad
b^T\lambda + \alpha \tilde{c} & \\
\text{s.t.} \quad
A^T\lambda+\alpha c \leq -e_j&.
\end{aligned}\tag{$\text{D}^2$}
\end{equation}
\end{minipage}

Notice that the constraint set for \eqref{proof:primalpp} is equal to $OPT(\tilde{b})$. By feasibility of \eqref{proof:dualpp} we see that the primal \eqref{proof:primalpp} is bounded below for all $j\in [d]$. Hence $x_j$ is bounded above on the set of optimizers $OPT(\tilde{b})$ and clearly bounded below by the nonnegativity constraint. This proves that $OPT(\tilde{b})$ is a bounded polytope and hence the convex hull of its extreme points \cite[Theorem 2.9]{bertsimas1997introduction}. These are precisely the extreme points of $P(\tilde{b})$ that belong to $OPT(\tilde{b})$, and they must take the form $x(I,\tilde{b})$ for some primal feasible basis $I$ \cite[Section 2.5]{luenberger2008linear}.

It remains to show that the convex hull of such extreme points coincides with the convex hull of extreme points induced by considering only dual feasible bases $I$ such that $x(I,\tilde{b})$ is primal optimal. For this, suppose that $\lambda(I)$ is infeasible for the dual program $(\text{P}_{\tilde{b}})$ and $x(I,\tilde{b})\in OPT(\tilde{b})$.  It suffices to show that there exists another basis $J$ such that $\lambda(J)$ is dual feasible and $x(J,\tilde{b})=x(I,\tilde{b})$. Infeasibility of $\lambda(I)$ means that there exists an index $k\notin I$ with negative reduced cost, namely such that
\begin{equation*}
c_k
<[\lambda(I)^TA]_k
=\lambda(I)^TA_k
=c_I^T(A_I)^{-1}A_k.
\end{equation*}
Define the direction $\theta\in\R^d$ that equals $(A_I)^{-1}A_k$ on the coordinates corresponding to $I$ and $\theta_k=1$; in symbols $\theta=\Aug_{\{k\}}1-\Aug_I(A_I)^{-1}A_k$. Then
\begin{equation*}
A\theta=A_k-A_I(A_I)^{-1}A_k=0
\qquad\textrm{ and }\qquad
c^T\theta
=c_k - c_I^T (A_I)^{-1}A_k<0.
\end{equation*}
If we initiate the simplex method with basis $I$ using any anti-cycling rule, such as the one proposed in \cite{bland1977new}, then it will not terminate at $I$.  Since $x(I,\tilde{b})$ is optimal, the simplex method cannot change it (otherwise the cost will decrease strictly), so it must terminate at some other basis $J$ such that $x(I,\tilde{b})=x(J,\tilde{b})$ with the property that $\lambda(J)$ is dual feasible.
\end{proof}

\begin{proof}[Proposition \ref{prop:slackness}]\label{proof:slackness}
In order to prove Proposition \ref{prop:slackness} we require a result known as \emph{strict complementary slackness}. As a rigorous proof is not easy to find, we give one for the sake of completeness.  Our proof follows the idea suggested in \citet[Exercise 4.20]{bertsimas1997introduction}.
\begin{lem}[Strict complementary slackness]\label{lem:strictslackness}
Consider the primal linear program \eqref{eq:standardLP} and its dual \eqref{eq:standardDP}. Assume that both programs have an optimal solution. Then there exist optimal solutions $x^\star$ and $\lambda^\star$ to the primal and dual, respectively, such that for every $j\in [d]$ either $x^\star_j>0$ or $A_j^T\lambda^\star<c_j$.
\end{lem}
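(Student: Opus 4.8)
The plan is to establish the claim one coordinate at a time and then take convex combinations. Fix $j\in[d]$. Either some optimal solution of \eqref{eq:standardLP} has a strictly positive $j$-th coordinate, or every optimal solution of \eqref{eq:standardLP} vanishes in coordinate $j$; these two alternatives are complementary. In the first case this index is already accounted for. The heart of the argument is the second case, in which I claim that there is a dual optimal solution $\lambda^{(j)}$ with $A_j^T\lambda^{(j)}<c_j$. Granting this, write $[d]=J_P\sqcup J_D$ according to which alternative holds, choose primal optimal solutions $x^{(j)}$ for $j\in J_P$ and dual optimal solutions $\lambda^{(j)}$ for $j\in J_D$ as just produced, and set $x^\star$ to be the average of the $x^{(j)}$ (any primal optimal solution if $J_P=\emptyset$) and $\lambda^\star$ the average of the $\lambda^{(j)}$ (any dual optimal solution if $J_D=\emptyset$); these exist by hypothesis. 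Since $OPT(b)$ and the dual optimal set $\{\lambda\,:\,A^T\lambda\le c,\ b^T\lambda=c(b)\}$ are convex, $x^\star$ is primal optimal and $\lambda^\star$ is dual optimal. For $j\in J_P$ one summand of $x^\star_j$ is strictly positive and the others are nonnegative, so $x^\star_j>0$; for $j\in J_D$ the summand $\lambda^{(j)}$ satisfies $A_j^T\lambda^{(j)}<c_j$ while every other summand is dual feasible, hence $A_j^T\lambda^{(k)}\le c_j$, so the average satisfies $A_j^T\lambda^\star<c_j$. This gives the lemma.

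It remains to produce $\lambda^{(j)}$ in the second case for a fixed $j$. Let $z^\star=c(b)$ be the (finite) common optimal value of \eqref{eq:standardLP} and \eqref{eq:standardDP}, and consider the auxiliary program
\begin{equation*}
(\mathrm{P}^j)\colon\qquad \min_{x\ge 0}\ -x_j\quad\text{s.t.}\quad Ax=b,\ \ c^Tx\le z^\star .
\end{equation*}
Its feasible set is exactly $OPT(b)$, because feasibility for \eqref{eq:standardLP} forces $c^Tx\ge z^\star$; and in the second case $-x_j\equiv0$ on that set, so $(\mathrm{P}^j)$ has optimal value $0$, attained. Rewriting $(\mathrm{P}^j)$ in standard form with a slack variable for the inequality $c^Tx\le z^\star$ and applying Theorem~\ref{thm:strongduality}, its dual also has optimal value $0$ and is attained at some $(\lambda_0,\mu_0)$ with
\begin{equation*}
A^T\lambda_0+\mu_0 c\le -e_j,\qquad \mu_0\le 0,\qquad b^T\lambda_0+z^\star\mu_0=0,
\end{equation*}
where $e_j\in\R^d$ is the $j$-th unit vector.

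From $(\lambda_0,\mu_0)$ I read off $\lambda^{(j)}$. If $\mu_0<0$, set $\lambda^{(j)}=\lambda_0/(-\mu_0)$; dividing the first relation above by $-\mu_0>0$ yields $A^T\lambda^{(j)}\le c+e_j/\mu_0\le c$ with a strict inequality in coordinate $j$ (namely $A_j^T\lambda^{(j)}\le c_j+1/\mu_0<c_j$), so $\lambda^{(j)}$ is dual feasible with strict slack at $j$, and the third relation gives $b^T\lambda^{(j)}=b^T\lambda_0/(-\mu_0)=z^\star$, so it is dual optimal. The delicate case is $\mu_0=0$: then $A^T\lambda_0\le -e_j$ and $b^T\lambda_0=0$, i.e., $\lambda_0$ is a recession direction of the dual feasible region along which the dual objective is constant. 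Taking any dual optimal $\bar\lambda$ and putting $\lambda^{(j)}=\bar\lambda+t\lambda_0$ preserves dual feasibility and dual optimality for every $t>0$, while $A_j^T\lambda^{(j)}\le A_j^T\bar\lambda-t$, which is $<c_j$ once $t$ is large enough. This settles the second case and completes the proof. I expect the main obstacle to be exactly the design of the auxiliary program $(\mathrm{P}^j)$ and the treatment of its possibly degenerate dual optimum ($\mu_0=0$); the remaining averaging step is routine.
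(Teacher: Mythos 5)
Your proof is correct and follows essentially the same route as the paper: for each index $j$ with $x_j=0$ on all of $OPT(b)$, form the auxiliary LP minimizing $-x_j$ over $OPT(b)$ (written as $Ax=b$, $c^Tx\le z^\star$, $x\ge 0$), pass to its dual via strong duality, split on the sign of the multiplier $\mu_0$ (your $\mu_0\le 0$ is the paper's $\mu\ge 0$ after the sign flip from writing $-c^Tx\ge -c_0$), then average the resulting primal and dual optimal solutions over $j$. The only cosmetic difference is that in the $\mu_0=0$ case you say ``once $t$ is large enough'' whereas any $t>0$ already works (since $A_j^T\bar\lambda\le c_j$ and $A_j^T\lambda_0\le -1$); the paper simply takes $t=1$.
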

\begin{proof}
First, fix some $j$ and suppose that every optimal solution $x$ to the primal satisfies $x_j=0$. Let $c_0$ be the optimal value for the primal program and consider the linear program \eqref{proof:primalppp} and its corresponding dual \eqref{proof:dualppp} given by
\begin{center}
\begin{minipage}{0.45\textwidth}
\begin{equation}\label{proof:primalppp}
\begin{aligned}
\min_{x\geq 0} \quad
-x_j& \\
\text{s.t.} \quad
Ax &= b, \,\,
-c^Tx\geq -c_0
\end{aligned}\tag{$\text{P}^3$}
\end{equation}
\end{minipage}
\begin{minipage}{0.45\textwidth}
\begin{equation}\label{proof:dualppp}
\begin{aligned}
\max_{\mu \geq 0} \quad
b^T\lambda -\mu c_0 & \\
\text{s.t.} \quad
A^T\lambda -\mu c&\leq -e_j
\end{aligned}\tag{$\text{D}^3$}
\end{equation}
\end{minipage}
\end{center}
where $\mu\in \mathbb{R}$ is the Lagrange parameter for $-c^Tx\geq -c_0$ and $e_j$ the $j$-th canonical unit vector in $\mathbb{R}^d$. Notice that the feasible set for \eqref{proof:primalppp} is precisely the set of optimal solutions for \eqref{eq:standardLP}. By assumption the optimal value for \eqref{proof:primalppp} is zero and so is the optimal value of \eqref{proof:dualppp} by strong duality. This implies that any optimal dual solutions $(\lambda,\mu)$ for \eqref{proof:dualppp} is such that $b^T\lambda=\mu c_0$. Suppose first that there exists an optimal solution $(\tilde{\lambda},\mu)$ for \eqref{proof:dualppp} such that $\mu>0$. Then $\lambda\coloneqq \nicefrac{\tilde{\lambda}}{\mu}$ is feasible for \eqref{eq:standardDP} and a straightforward calculation shows that it is optimal. In particular, we find
$A_j^T\lambda\leq c_j - \nicefrac{1}{\mu} < c_j$. For the case that $(\tilde{\lambda},0)$ is optimal for \eqref{proof:dualppp} we take any optimal solution $\lambda^\star$ for \eqref{eq:standardDP} and define $\lambda=\lambda^\star+\tilde{\lambda}$. As before, $\lambda$ is feasible and optimal for \eqref{eq:standardDP} and fulfils $A_j^T\lambda\leq c_j-1<c_j$. We conclude that there exists an optimal solution $\lambda$ for \eqref{eq:standardDP} such that $A_j^T\lambda\leq c_j-1<c_j$.

Now, consider the set of all optimal dual solutions for \eqref{eq:standardLP}. Let $I\subseteq [d]$ be such that all optimal primal solutions satisfy $x_i=0$ for all $i \in I$. According to the previous conclusion, for all $i\in I$ there exists $\lambda_i$ optimal for \eqref{eq:standardDP} such that $A_i^T\lambda_i<c_i$.  For all coordinates with index $k\notin I$ let $x_k^\star$ be optimal for \eqref{proof:primalppp} with $k$-th coordinate nonzero.  By usual complementary slackness $A_k^T\lambda_i=c_k$ for all $k\notin I$ and all $i\in I$.  If $I$ is non-empty then the averages
\[
x^\star
=\frac1{d-|I|}\sum_{k\notin I}x^\star_k
\quad (x^*=0 \text{ if }I=[d]),
\qquad \lambda^\star
=\frac1{|I|}\sum_{i\in I}\lambda_i,
\]
are optimal and have the desired properties.  If $I=\emptyset$ then, since $x^\star$ is strictly positive, the usual complementary slackness yields a $\lambda^\star$ such that $A^T\lambda^\star=c$, and $(x^\star,\lambda^\star)$ satisfy the required conditions.
\end{proof}
We are now able to prove Proposition \ref{prop:slackness}. According to Theorem \ref{thm:strongduality} as soon as one program attains an optimal solution, so does the other. In particular, both programs admit optimal basic solutions. Moreover, by strong duality any pair $(x^\star,\lambda^\star)$ of primal and dual feasible solutions is optimal if and only if \emph{complementary slackness} holds, i.e.\ 
\begin{equation*}
x^\star_i(c_i-A_i^T \lambda^\star)=0, \, \forall i \in [d].
\end{equation*}
Notice that in order to prove the statements it suffices to assume that the primal \eqref{eq:standardLP} fulfils the assumptions and then to conclude for the dual \eqref{eq:standardDP}. This follows as the dual can be transformed into standard form and the roles of primal and dual are interchangeable.
\begin{itemize}
\item[(i)] Let $x^\star$ be a nondegenerate primal optimal solution. In particular, there exists an index set $I\subseteq [d]$ with $\vert I \vert=m$ of the columns of $A$ such that $A_I$ is invertible and $x^\star=x(I,b)$ with $x_i>0$ for all $i \in I$ and $x_i=0$ for all $i\notin I$. Hence, by complementary slackness for any dual optimal solution $\lambda^\star$ it holds that $c_i-A_i^T\lambda^\star=0$ for all $i \in I$. The latter equalities read $A_I^T\lambda^\star=c_I$ which determines $\lambda^{\star}$ uniquely.

\item[(ii)] Uniqueness of the dual solution again follows by (i). Further note that as the primal solution is unique we deduce by strict complementary slackness Lemma \ref{lem:strictslackness} that $c_i-A_i^T\lambda^\star>0$ for all $i \notin I$. Hence, the dual solution is nondegenerate.

\item[(iii)] Assume the primal attains a unique but degenerate optimal basic solution $x^\star$. This means that there exists an index set $I\subseteq [d]$, such that $\vert I \vert<m$, indexing all positive entries of $x^\star$. More precisely, $x^\star_i>0$ for all $i \in I$ and $x^\star_i=0$ for all $i \in I^c$. By strict complementary slackness there is a dual optimal solution $\lambda^\star$ satisfying $A_i^T\lambda^*_i<c_i$ for all $i\in I^c$. As $|I^c|=d-|I|>d-m$ the solution $\lambda^\star$ cannot be basic. Nevertheless, there always exists at least one dual optimal basic solution. This completes the proof.
\end{itemize} 
\end{proof}

\begin{proof}[Lemma \ref{lem:dualnondegenerate}]
Suppose first that $\lambda(I_j)=\lambda(I_k)$ for some $j\neq k$, i.e., assumption \eqref{ass:dualnondegen} does not hold. Then for all indices $i\in I_j\cup I_k$ we have equalities in $[A^T\lambda(I_k)]_i= c_i$. As $j\neq k$ the union $I_j\cup I_k$ has cardinality greater than $m$. Consequently, $\lambda(I_j)$ is degenerate.\\
For the converse, let $\lambda(I_1),\ldots,\lambda(I_K)$ be all dual optimal basic solutions induced by basis index sets $I_1,\ldots,I_K$ each of cardinality $m$ and recall that by definition $x(I_j,b)$ for $1\leq j \leq K$ is also primal optimal. Suppose there exists an index set $I_j$ with $1\leq j \leq K$ such that $\lambda(I_j)$ is a degenerate dual optimal basic solution. By definition of degeneracy there exists an index set $L$ of active constraints in the dual, i.e., for each $l\in L$ it holds that 
\begin{equation*}
\lambda^T(I_j)\cdot a_l=c_l
\end{equation*}
with the property $\vert L \vert>m$ and $I_j\subseteq L$. Denote by \textit{Pos} the positive entries for the optimal primal basic solution $x(I_j,b)$. Then we have $Pos \subseteq I_j$ and can partition 
\begin{equation*}
L=I_j \, \sqcup \, L\setminus I_j = I_j \,\sqcup\, I_j\setminus Pos \,\sqcup\, L\setminus I_j\, .
\end{equation*}
Notice that by definition the columns of the constraint matrix $A_{I_j}$ form a basis of $\mathbb{R}^m$. Hence, any column of the constraint matrix $A_{L\setminus I_j}$ can be written as
\begin{equation}\label{eq:representation1}
a_z=\sum_{i\in Pos} y_i^z a_i + \sum_{s\in I_j\setminus Pos} y_s^za_s\, , \, z\in L\setminus I_j\, .
\end{equation}
Suppose there exists some index $z\in L\setminus I_j$ and $s\in I_j\setminus Pos$ such that $y_s^z\neq 0$. Then we can define a new basis $\tilde{I}\coloneqq I_j\setminus\{s\}\cup \{z\}$ such that the columns of $A_{\tilde{I}}$ define a basis of $\mathbb{R}^m$. Further, by degeneracy $\lambda(\tilde{I})=\lambda(I_j)$ and as $Pos\subseteq \tilde{I}$ we conclude that $x(\tilde{I},b)=x(I_j,b)$. Consequently, there exists an index $i\neq j$ with $1\leq i\leq K$ such that $\tilde{I}=I_i$ contradicting assumption \eqref{ass:dualnondegen}. Hence, for the representation \eqref{eq:representation1} we conclude $y_s^z=0$ for all $s\in I_j\setminus Pos$ and find that for any $z\in L\setminus I_j$ it holds that 
\begin{equation}\label{eq:representation2}
a_z=\sum_{i\in Pos} y_i^z a_i \, .
\end{equation}
Suppose there exists an index 
\begin{equation*}
i_0\in \argmin_{i\vert y_i^z > 0} \frac{x_i}{y_i^z}\, ,
\end{equation*}
where $x_i>0$ for $i\in Pos$ are the positive entries of $x(I_j,b)$. Then we can rewrite 
\begin{equation*}
a_{i_0}=\frac{1}{y_{i_0}^z} a_z - \sum_{\substack{i \in Pos\\ i\neq i_0}} \frac{y_i^z}{y_{i_0}^z} a_i
\end{equation*}
and further find that
\begin{equation*}
\begin{split}
b=\sum_{i\in Pos} x_i a_i = \sum_{\substack{i \in Pos \\ i\neq i_0}} x_i a_i + x_{i_0}\left( \frac{1}{y_{i_0}^z}a_z - \sum_{\substack{i \in Pos \\ i\neq i_0}}\frac{y_i^z}{y_{i_0}^z}a_i\right)
=\frac{x_{i_0}}{y_{i_0}^z} a_z +\sum_{\substack{i \in Pos \\ i\neq i_0}}\left( x_i-\frac{x_{i_0}y_i^z}{y_{i_0}^z}\right)a_i
=\sum_{i\in \tilde{I}} \tilde{x_i}a_i
\end{split}
\end{equation*}
for some proper choice of $\tilde{x_i}$. By choice of the index $i_0$ we find that $\tilde{x_i}$ are nonnegative, i.e., $\tilde{I}$ is a primal and dual optimal basis. Moreover, $\lambda(\tilde{I})=\lambda(I_j)$ that again contradicts \eqref{ass:dualnondegen}. We deduce for the representation in \eqref{eq:representation2} that $y_i^z\leq 0$. Consider the vector 
\begin{equation*}
w\coloneqq \text{Aug}_{\{z\}}(1)-\text{Aug}_{Pos}(y^z)\, .
\end{equation*}
By definition $w\geq 0$ and $Aw=0$. Further, by dual degeneracy and the representation of $a_z$ it holds that
\begin{equation}
c_z=\sum_{i\in Pos} y_i^z c_i\, .
\end{equation}
We deduce that $c^Tw=0$ and hence $w$ is a primal optimal ray, i.e., for each $t\geq 0$ the vector $\bar{x}(t)=x(I_j,b)+tw$ is primal optimal. This contradicts assumption \eqref{ass:optBounded}. In total we see that if any basis $I_j$ for $1\leq j \leq K$ yields a degenerate dual basic solution we can modify basis $I_j$ to some $I_i$ with $i\neq j$ and $1\leq i \leq K$ such that $\lambda(I_j)=\lambda(I_j)$.

It remains to prove that also $\lambda(I_l)$ for $K+1\leq l \leq N$ are nondegenerate. Recall that any basis $I_l$ for $K+1\leq l \leq N$ yields an optimal dual basic solution but $x(I_l,b)$ is not primal optimal. We prove that such bases do not exist under assumption \eqref{ass:dualnondegen}. Consider any optimal primal basic solution $x(I_j,b)$ for $1\leq j \leq K$ and denote by $Pos_j$ its positivity set. Then by strong duality 
\begin{equation*}
0=c^Tx(I_j,b)-b^T\lambda(I_l)=\sum_{i\in Pos_j} x_i\left( c_i-a_i^T\lambda(I_l)\right)\, .
\end{equation*}
In particular, as $I_l$ is not a primal optimal basis it holds that $Pos\nsubseteq I_l$ and above equality implies that $\lambda(I_l)$ is necessarily degenerate with active constraint set $L$ including $I_l\cup Pos_j$. But then we can modify basis $I_l$ to some primal and dual optimal basis $I_i$ for $1\leq i \leq K$ such that $\lambda(I_i)=\lambda(I_l)$ is degenerate. Hence, such basis $I_l$ cannot exist under assumption \eqref{ass:dualnondegen}. Hence, any optimal dual basic solution is nondegenerate and induced by some basis primal and dual optimal basis $I$.
\end{proof}

\section{On the Assumptions}\label{Appendix:Assumptions}

\begin{proof}[Lemma \ref{lem:boundedness1}]
By assumption $P(b_0)$ is feasible and bounded. Boundedness is equivalent to the set $\{x\in\mathbb{R}^d\,\vert\,Ax=0,x\geq0\}$ containing only the zero vector. This set is known as the \emph{recession cone} of $A$ \cite[Section 4.8]{bertsimas1997introduction} and does not depend on $b_0$. In particular, this implies that for any $b\in\mathbb{R}^m$ the set $P(b)$ is either feasible and bounded or empty (hence bounded).
\end{proof}

\begin{proof}[Lemma \ref{lem:boundedness2}]
By assumption, for each $i\in[d]$ there exists $j\in[m]$ such that $A_{ji}>0$. Since $A$ has nonnegative entries, the $j$-th constraint enforces that $x_i\le b_j/A_{ji}$. Since $i$ is arbitrary, this implies that $P(b)=\{Ax=b,x\ge0\}$ is bounded. (It will be empty if $\min_jb_j<0$.)
\end{proof}

\begin{proof}[Lemma \ref{lem:feasibility}]
\begin{itemize}
\item[(i)] The first condition states existence of basis $I$ such that $x(I,b)=\text{Aug}_I[A_I^{-1}b_0]$ is feasible and the coordinates $i\in I$ contain positive values. Now, notice that $b\mapsto \text{Aug}_I[A_I^{-1}b]$ is a continuous function in $b$ and hence for any $b$ sufficiently close to $b_0$ we have that $x(I,b)$ remains positive on the coordinates $i\in I$ and by definition of the operator $Ax(I,b)=b$. Hence, we conclude that $P(b)\neq \emptyset$ for all $b$ sufficiently close to $b_0$.
\item[(ii)] According to \citet[Theorem 4.15]{bertsimas1997introduction} there exist bases $I_1,\ldots,I_K$ and elements $w_1,\ldots,w_R$ in the recession cone of $A$ (so that $w_i\in\R^d_+$ and $Aw_i=0$) such that 
\begin{equation*}
0<\bar{x}=\sum_{k=1}^K \lambda_k x(I_k,b_0)+\sum_{r=1}^R \beta_r w_r,\quad \sum_{k=1}^K\lambda_k=1,\, \lambda_k\geq 0,\, \beta_r\geq 0.
\end{equation*}
Notice that, as before, $b\mapsto x(I_k,b)$ is continuous in $b$, and the elements $w_r$ do not depend on $b$. In particular, replacing $b_0$ by some $b$ sufficiently close to $b_0$, we find that 
\begin{equation*}
0<x=\sum_{k=1}^K \lambda_k x(I_k,b)+\sum_{r=1}^R \beta_r w_r,\quad \sum_{k=1}^K\lambda_k=1,\, \lambda_k\geq 0,\, \beta_r\geq 0
\end{equation*}
is nonnegative and feasible by definition. This shows that $P(b)\neq \emptyset$.
\end{itemize}
\end{proof}

\begin{proof}[Lemma \ref{lem:uniqueness}]
Consider the dual linear program $\max_{A^T\lambda\le C}b^T\lambda$. We may assume without loss of generality that the dual attains an optimal solution (otherwise the primal is either unbounded or infeasible). Then there exists a dual optimal basic solution $\lambda(I)$ for some basis $I$ and it holds that $A^T\lambda(I)=C_{I}$. By definition, the dual is degenerate if there exists an index $j\notin I$ such that $A^T\lambda(I)-C_j=0$. As $C_j$ has a density, this event occurs with probability zero. Taking the (finite) union over all indices $l\notin I$ proves that with probability zero $\lambda(I)$ is degenerate. By Proposition \ref{prop:slackness} the primal is unique almost surely.
\end{proof}

\begin{proof}[Lemma \ref{lem:primalNotUnique}]
Suppose that $\lambda(I_1)=\lambda^{\star}=\lambda(I_2)$, i.e., the dual feasible solution $\lambda^{\star}$ is degenerate.  Consider the set $H$ of all $v\in\R^m$ such that $\lambda^\star$ uniquely maximizes $v^T\lambda$ over the dual feasible region and $v$ is in the open cone spanned by $A$, namely
\[
H=\bigcap_{I:\lambda(I)\ne \lambda^\star,A^T\lambda(I)\le c}\{v\in\mathbb R^m:v^T\lambda^\star>v^T\lambda(I)\}\bigcap \{Ax:x\in (0,\infty)^d\}.
\]
Then $H$ is an open non-empty (convex) cone. Moreover, there exists $\epsilon>0$ such that if $\|b-b_0\|<\epsilon$, none of $\lambda(I_k)$ for $k>K$ can be optimal for (\hyperref[eq:standardDP]{$\text{D}_b$}). Let $H_\epsilon=H\cap\{x\in\R^m:\|x\|<\epsilon\}$, which is again open and non-empty (since $H$ is a cone). By assumption we have $0<\PR (G\in H_\epsilon)\le \liminf_{n\to\infty} \PR (G_n\in H_\epsilon)$ and, invoking \eqref{ass:feasforn}, we see that $\liminf_{n\to\infty} \PR(G_n\in H_\epsilon,x^{\star}(b_n)\textrm{ exists})>0$. But the latter event entails that the dual problem (\hyperref[eq:standardDP]{$\text{D}_{b_n}$}) is bounded and has a unique degenerate solution $\lambda^{\star}$. Proposition \ref{prop:slackness} implies that in this case $x^{\star}(b_n)$ is not unique.
\end{proof}

\begin{proof}[Lemma \ref{lem:jointConv}]
We need to show that for any $\varphi \in \BL(\Z)$ we have that
\begin{equation*}
\begin{split}
\lvert \E[\varphi(\alpha_n^\K,G_n)]- \E[\varphi(\alpha^\K,G)] \rvert &= \lvert \E\psi_n(G_n)- \E\psi(G) \rvert \\
&\leq \lvert\E [\psi_n(G_n) - \psi(G_n)]\rvert  + \lvert \E[\psi(G_n) - \psi(G)] \rvert
\end{split}
\end{equation*}
vanishes as $n\to\infty$. To bound the first term notice that in particular for any fixed $g$ it holds that $\|\alpha\mapsto\varphi(\alpha,g)\|_{\BL(\Delta_{|\K|})}\le \|\varphi\|_{\BL(\Z)}\le1$, so
\begin{equation*}
|\psi_n(g) - \psi(g)|
\le \int_{\Delta_{|\K|}}|\varphi(\alpha,g)|d[\mu_{n,g}^\K-\mu_g^\K](\alpha)
\le \BL(\mu_{n,g}^\K,\mu_g^\K).
\end{equation*}
Hence, we find $\E |\psi_n(G_n) - \psi(G_n)|\le \sup_g \BL(\mu^\K_{n,g},\mu^\K_g)$ that tends to zero by the uniform assumption. Notice that the supremum can be an essential supremum, i.e., taken on set of full measure with respect to both $G_n$ and $G$ instead of the whole of $\R^m$. For the second term observe that $\|\psi\|_\infty\le\|\varphi\|_\infty$ and that
\begin{align*}
|\psi(g_1)-\psi(g_2)|
&=\Bigg|\int_{\Delta_{|\K|}}\varphi(\alpha,g_1)-\varphi(\alpha,g_2)d\mu_{g_1}^\K(\alpha)
+\int_{\Delta_{|\K|}}\varphi(\alpha,g_2)d[\mu_{g_2}^\K-\mu_{g_1}^\K](\alpha)\Bigg|\\
&\le\|\varphi\|_{\Lip}\|g_1-g_2\|
+\BL(\mu_{g_2}^\K,\mu_{g_1}^\K).
\end{align*}
Hence, for $L\coloneqq\sup_{g_1\ne g_2}\BL(\mu^\K_{g_1},\mu^\K_{g_2})/\|g_1-g_2\|\in[0,\infty]$ we conclude that 
\begin{equation*}
\|\psi\|_{\BL(\R^m)}\le \|\varphi\|_{\BL(\Z)}+L\le 1+L.
\end{equation*}
Dividing $\psi$ by its bounded Lipschitz norm, we find
\begin{equation*}
\E|\psi(G_n)-\psi(G)|\le \|\psi\|_{\BL(\mathbb{R}^m)}\BL(\mathcal L(G_n),\mathcal L(G))\le (1+L)\BL(\mathcal L(G_n),\mathcal L(G)).
\end{equation*}
This completes the proof for the quantitative statement. Joint convergence still follows if $g\mapsto \mu_g^\K$ is only continuous $G$-almost surely (but not Lipschitz). In fact, $\psi$ is still continuous and bounded $G$-almost surely so that $\E\psi(G_n)\to\E\psi(G)$. Therefore, $\E\varphi(\alpha_n^\K,G_n)\to\varphi(\alpha^\K,G)$ for all $\varphi\in \BL(\Z)$, which implies that 
$(\alpha_n^\K,G_n)\to(\alpha^\K,G)$ in distribution.
\end{proof}

\section{Optimal Transport}\label{Appendix:OT}

\begin{proof}[Corollary \ref{cor:uniquerealline}]\label{proof:cor:uniquerealline}
We prove that on the finite discrete space $\mathcal{X}\coloneqq\{x_1<\ldots<x_N\}$ the cost $c(x_i,x_j)=f(\vert x_i-x_j\vert)$ for $f$ strictly convex satisfies the strict Monge condition \eqref{eq:strictmonge}, i.e., we show that for $x_i<x_{i^\prime}$ and $x_j<x_{j^\prime}$ it holds that
\begin{equation*}
f(\vert x_i-x_j\vert)+f(\vert x_{i^\prime}-x_{j^\prime}\vert) < f(\vert x_i-x_{j^\prime}\vert)+ f(\vert x_{i^\prime}-x_j\vert).
\end{equation*}
By symmetry it suffices to consider the following cases:
\begin{itemize}
\item[(i)] $x_i<x_{i^\prime}\leq x_j<x_{j^\prime}$: Define $d_1\coloneqq \vert x_i-y_{j^\prime}\vert$, $d_2\coloneqq \vert x_i-y_j\vert$, $d_3\coloneqq \vert x_{i^\prime}-y_{j^\prime}\vert$ and $d_4\coloneqq \vert x_{i^\prime}-y_j\vert$. By definition we have that $d_1>d_4$ and $d_2+d_3=d_1+d_4$. Further, define $t_i\coloneqq \frac{d_1-d_4}{d_1-d_4}$ for $i=2,3$. By strict convexity it follows that 
\begin{equation*}
f(d_i)=f(t_id_1+(1-t_i)d_4)<t_if(d_1)+(1-t_i)f(d_4), \, i=2,3.
\end{equation*}
Adding both inequalities and the fact that $t_2+t_3=1$ yields that $f(d_2)+f(d_3)<f(d_1)+f(d_4)$ and hence the Monge condition.
\item[(ii)] $x_i\leq x_j < x_{i^\prime}\leq x_{j^\prime}$: For this case, define $d_1\coloneqq \vert x_i-x_j\vert$, $d_2\coloneqq \vert x_{i^\prime}-x_{j^\prime}\vert$, $d_3\coloneqq \vert x_i-x_{j^\prime}\vert$ and $d_4\coloneqq \vert x_j-x_{i^\prime}\vert$. Moreover, the Monge condition is trivally satisfied if either $x_i=x_j$ or $x_{i^\prime}=x_{j^\prime}$ since  $d_1+d_2<d_3$. Hence, suppose that $d_1,d_2>0$ and notice that by strict convexity of $f$ and the fact that $f(0)=0$ we deduce that $f$ is strictly super-additive, i.e., $f(x)+f(y)< f(x+y)$ for $x,y>0$. In particular, this implies that
\begin{equation*}
f(d_1)+f(d_2)< f(d_1+d_2)<f(d_3)<f(d_3)+f(d_4)
\end{equation*}
and hence the Monge condition follows.
\item[(iii)] $x_i< x_j \leq x_{i^\prime}< x_{j^\prime}$: If $x_j<x_{i^\prime}$ then case (ii) applies, else we have $x_j=x_{i^\prime}$ and the Monge condition is fulfilled by the strict super additive condition.
\item[(iv)] $x_i\leq x_j<x_{j^\prime}\leq x_{i^\prime}$: For this case, define $d_1\coloneqq \vert x_i-y_j\vert$, $d_2\coloneqq \vert x_{i^\prime}-x_{j^\prime}\vert$, $d_3\coloneqq \vert x_i-x_{j^\prime}\vert$ and $d_4\coloneqq \vert x_{i^\prime}-x_j \vert$. We notice that $d_1<d_3$ and $d_2<d_4$ which yields by strict monotonicity of $f$ that $f(d_1)+f(d_2)<f(d_3)+f(d_4)$ and hence the Monge condition.
\end{itemize}
For the concave case we argue as follows.  Let $\mu$ and $\nu$ be probability measures on $\mathbb R$ and let the cost function take the form $c(x,y)=f(|x-y|)$ where $f:\R_+\to\R_+$ is nondecreasing, $f(0)=0$ and $f$ is strictly concave and the optimal cost between $\mu$ and $\nu$ is finite (thus $f$ is stricly increasing and continuous). Firstly, according to \citet[Proposition 2.9]{gangbo1996geometry}, all the common mass must stay in place.  Hence, we may assume that $\mu$ and $\nu$ are mutually singular.
\begin{lem}
Let $\mu$ and $\nu$ be mutually singular and both supported on a finite union of intervals.  Then under the above conditions on $c$, the optimal transport plan between $\mu$ and $\nu$ is unique.
\end{lem}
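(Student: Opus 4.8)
The plan is to argue by contradiction, using $c$-cyclical monotonicity together with the strict subadditivity of $f$. First I would record the elementary facts implied by the hypotheses: since $f(0)=0$ and $f$ is strictly concave and nondecreasing with finite optimal cost, $f$ is continuous, strictly increasing and strictly subadditive, i.e.\ $f(s+t)<f(s)+f(t)$ for $s,t>0$, and it has strictly decreasing increments. Next, recall that every optimal transport plan is concentrated on a $c$-cyclically monotone set \cite[Theorem 5.10]{villani2008optimal}, and that if $\gamma_1,\gamma_2$ are both optimal then so is $\tfrac12(\gamma_1+\gamma_2)$, so that $\Gamma:=\mathrm{supp}\,\gamma_1\cup\mathrm{supp}\,\gamma_2$ is $c$-cyclically monotone. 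It therefore suffices to show that there is at most one element of $\Pi(\mu,\nu)$ supported in $\Gamma$.

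The key geometric step is a non-crossing lemma: for any $(x,y),(x',y')\in\Gamma$, the open segments $\bigl(\,x\wedge y,\ x\vee y\,\bigr)$ and $\bigl(\,x'\wedge y',\ x'\vee y'\,\bigr)$ are disjoint or one is contained in the closure of the other, the ``properly overlapping'' configuration $x<x'\le y<y'$ (together with its mirror images) being impossible. I would prove this from the two-point inequality $f(|x-y|)+f(|x'-y'|)\le f(|x-y'|)+f(|x'-y|)$ by a short case analysis on the relative order of the four points: in the offending case one writes the gaps $a=x'-x$, $b=y-x'$, $d=y'-y$ (all $\ge 0$), so the inequality reads $f(a+b)+f(b+d)\le f(a+b+d)+f(b)$, while strictly decreasing increments give $f(a+b)-f(b)>f(a+b+d)-f(b+d)$ whenever $a,d>0$, a contradiction. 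Hence the family of these open segments is laminar. Combining laminarity with $\mu\perp\nu$ and the fact that both supports are finite unions of intervals, I would pass to the block decomposition: $\mathbb{R}$ splits into finitely many maximal intervals $[a_k,b_k]$ with $\mu([a_k,b_k])=\nu([a_k,b_k])$, on each of which (after possibly swapping $\mu$ and $\nu$) all local $\mu$-mass precedes all local $\nu$-mass, and the non-crossing/laminar structure forces any plan supported in $\Gamma$ to coincide on each block with the order-reversing (anti-monotone) plan between $\mu|_{[a_k,b_k]}$ and $\nu|_{[a_k,b_k]}$. Since the anti-monotone plan between two prescribed marginals is unique, gluing over the blocks yields a unique plan supported in $\Gamma$; thus $\gamma_1=\gamma_2$.

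The main obstacle is the generality of the statement: $\mu$ and $\nu$ need not be atomic, and mutual singularity does not force their supports to be disjoint, so the block decomposition and the ``anti-monotone within a block'' claim are less transparent than in the finitely supported situation that feeds Corollary~\ref{cor:uniquerealline}. I expect the cleanest remedy is to invoke the exact-solution structure theory for strictly concave costs on the line (\citet{gangbo1996geometry}; \citet{mccann1999exact}), which already identifies the optimal plan of each indecomposable component with its anti-monotone plan, so that the non-crossing lemma above is needed only to certify that any $\gamma$ supported in a $c$-cyclically monotone set must have that form. For the discrete application this is in fact immediate: the non-crossing lemma shows that the support of \emph{any} optimal plan between mutually singular, finitely supported $\mu,\nu$ is strictly $c$-cyclically monotone, so uniqueness follows directly from Theorem~\ref{thm:uniquenessOT}.
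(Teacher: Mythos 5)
Your non-crossing lemma is exactly the tool the paper uses (the forbidden configuration $x_1<y_2<\min(x_2,y_1)$ derived from cyclical monotonicity and strict concavity), and the reduction to "at most one plan supported in $\Gamma=\mathrm{supp}\,\gamma_1\cup\mathrm{supp}\,\gamma_2$" is a sound way to set up the contradiction. The genuine gap is in the block-decomposition step, as you half-suspect. Splitting $\R$ into maximal intervals $[a_k,b_k]$ with $\mu([a_k,b_k])=\nu([a_k,b_k])$ does \emph{not} force all local $\mu$-mass to precede all local $\nu$-mass on a block: when the $\mu$- and $\nu$-intervals alternate several times inside one block (e.g.\ $\mu$ on $[0,1]\cup[2,3]$, $\nu$ on $[1,2]\cup[3,4]$ with unbalanced masses on the inner intervals), there is no single anti-monotone pairing of the restrictions that is feasible, so ``glue the anti-monotone plans over the blocks'' does not go through. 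The two fallbacks you offer do not close this: invoking McCann/Gangbo--McCann outsources precisely the structure result one is trying to prove, and the claim that the discrete case ``is in fact immediate'' from the non-crossing lemma plus Theorem~\ref{thm:uniquenessOT} is not — strict $c$-cyclical monotonicity must be verified for cycles of \emph{every} length, whereas the non-crossing lemma only controls $2$-cycles.

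The paper's own argument avoids this entirely by a constructive peeling induction: it introduces the quantile $a^\ast=\inf\{a:\nu[a_1,a]\ge\mu([a_0,a_1])\}$ (or the symmetric variant when $\mu([a_0,a_1])>\nu([a_1,a_2])$), shows via the non-crossing argument that any optimal plan must send the $\mu$-mass of $[a_0,a_1]$ anti-monotonically into $[a_1,a^\ast]$, subtracts these pieces — reducing the number of alternating intervals by one — and iterates. This handles the mixed blocks correctly and terminates because the supports are finite unions of intervals. If you want to repair your write-up along your own lines, you would need to replace ``anti-monotone on each balanced block'' by exactly this recursive peeling; as stated, the block-level anti-monotonicity claim is false.
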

\begin{rem}
If $\mu$ and $\nu$ have finite support, the assumption is satisfied.  We believe that the statement is true for an arbitrary pair of measures $\mu$ and $\nu$, but the above formulation is sufficient as in the context of the present $\mu$ and $\nu$ are anyway finitely supported.  For example, the support could contain countably many intervals as long as there is ``clear" starting point $a_0$ below; but $M$ could be infinite.
\end{rem}
For the proof, there is nothing to prove if $\mu=\nu=0$, so we assume $\mu\ne\nu$. It follows from the assumptions that there exists a finite sequence of $M+1\ge3$ real numbers
\begin{equation*}
-\infty\le a_0 < a_1 < a_2 < a_3 < .... < a_M\le \infty
\end{equation*}
such that (interchanging $\mu$ and $\nu$ if necessary)
\begin{align*}
\mu([a_0,a_1]\cup[a_2,a_3]\cup [a_4,a_5]\cup\dots)&=1;\\
\mu([a_1,a_2]\cup[a_3,a_4]\cup [a_5,a_6]\cup\dots)&=1.
\end{align*}
Let $m_0=\mu([a_0,a_1])$ and suppose that $m_0 \le \nu([a_1,a_2])$. Define the quantile
\begin{equation*}
a^*=\inf\{a : \nu[a_1,a]\ge m_0\}\in[a_1,a_2].
\end{equation*}
We now claim that in any optimal transport plan $\pi$ between $\mu$ and $\nu$, the $\mu$-mass of $[a_0,a_1]$ must go to $[a_1,a^*]$.  Indeed, suppose that a positive $\mu$-mass from $[a_0,a_1]$ goes strictly beyond $a^*$.  Then some mass from the support of $\mu$ but not in $[a_0,a_1]$ has to go to $[a_1,a^*]$.  Such a transport plan gives positive measure to the set
\begin{equation*}
[a_0,a_1]\times [a^*+\epsilon,\infty) \bigcap [a_2,\infty]\times [a_1,a^*]
\end{equation*}
for some $\epsilon>0$.
This contradicts cyclical monotonicity: indeed, if $\mu$ and $\nu$ are discrete measures, this entails sending mass from $x_1$ to $y_1$ and from $x_2$ to $y_2$ with $x_1<y_2<\min(x_2,y_1)$, which is clearly suboptimal.  For general measures (not necessarily finitely supported), see \citet[Theorem 2.3]{gangbo1996geometry}.  Hence the claim is proved.

Let $\mu_1$ be the restriction of $\mu$ to $[a_0,a_1]$ and $\nu_1$ be the restriction of $\nu$ to $[a_1,a^*]$ with mass $m_0$, namely $\nu_1(B)=\nu(B)$ if $B\subseteq[a_1,a^*)$, $\nu_1(\{a^*\}) = m_0 - \nu([a_1,a^*))$ and $\nu(B)=0$ if $B\cap[a_1,a^*]=\emptyset$.  By definition of $a^*$, $\nu_1$ is a measure (i.e., $\nu_1(\{a^*\})\ge0$) and $\nu_1$ and $\mu_1$ have the same total mass $m_0$.  Each of these measures is supported on an interval and these intervals are (almost) disjoint.  Cyclical monotonicity and strict concavity of the cost function entail that any optimal transport plan between $\mu_1$ and $\nu_1$ must be non-increasing (in a set-valued sense).  Since there is only one such plan, the transport plan is unique.

By the preceding paragraph and the above claim, we know that $\pi$ must be non-increasing from $[a_0,a_1]$ to $[a_1,a^*]$, which determines $\pi$ uniquely on that part.  After this transport is carried out, we are left with the measures $\nu-\nu_1$ and $\mu-\mu_1$, where the latter is supported on one less interval, namely the interval $[a_0,a_1]$ disappears.\\
If instead $\mu_0([a_0,a_1])>\nu([a_1,a_2])$, we can use the same construction with
\begin{equation*}
a^*=\inf\{a : \mu([a_0,a]\ge \nu([a_1,a_2])\}\in[a_0,a_1],
\end{equation*}
and the interval $[a_1,a_2]$ will disappear.  We then merge $[a^*,a_1]$ with $[a_2,a_3]$, that is
\begin{align*}
\mu-\mu_1 \textrm{ is supported on }&[a^*,a_3]\cup [a_4,a_5]\cup\dots,\\
\nu-\nu_1 \textrm{ is supported on }&[a_3,a_4]\cup [a_5,a_6]\cup\dots.
\end{align*}
If $\mu([a_0,a_1])=\nu([a_1,a_2])$ then both the intervals $[a_0,a_1]$ and $[a_1,a_2]$ disappear when considering $\mu-\mu_1$ and $\nu-\nu_1$.  In all three cases we can continue inductively and construct $\pi$ in a unique way.  Since there are finitely many intervals, the procedure is guaranteed to terminate.  Thus $\pi$ is unique.
\end{proof}

\begin{proof}[Theorem \ref{thm:uniquenessOT}]\label{proof:uniquenessOT}
For the if direction, recall the fact that even for more general probability measures the support of any optimal transport plan is indeed $c$-cyclically monotone \cite[Proposition 2.3]{gangbo1996geometry}. In fact, we can simply adapt their proof strategy for the discrete case considered here. For this, suppose $\pi^\star$ is a unique optimal transport coupling such that $\text{supp}(\pi^\star)$ is not strictly $c$-cyclically monotone. Then we can find $n\in\mathbb{N}$, $n\geq 2$ and a family $(i_1,j_1),\ldots,(i_n,j_n)$ with $(i_k,j_k)\in \text{supp}(\pi^\star)$ for all $1\leq k \leq n$ such that 
\begin{equation}\label{eq:cyclicproof}
\sum_{k=1}^n c_{i_kj_k}=\sum_{k=1}^n c_{i_kj_{k-1}}, \quad j_0 \coloneqq j_n.
\end{equation} 
Notice that by definition at least one tuple $(i_k,j_{k-1})$ for $1\leq k\leq n$ is not contained in the support of $\pi^\star$. We now create a different feasible plan with same optimal objective value. For this, set $\tau\coloneqq \min_{k\in [n]} \pi^\star_{i_kj_k}>0$ and define a new transport coupling $\tilde{\pi}$ by subtracting $\tau$ on $\pi^\star_{i_kj_k}$ and adding $\tau$ to $\pi^\star_{i_kj_{k-1}}$ for all $1\leq k\leq n$. Clearly, $\tilde{\pi}$ is feasible and different to $\pi^\star$. Moreover, by \eqref{eq:cyclicproof} it has the same overall cost as $\pi^\star$. Hence, $\pi^\star$ cannot be unique.

For the converse, we prove that strict $c$-cyclically monotonicity for $\text{supp}(\pi^\star)$ implies the existence of nondegenerate optimal dual solutions $(\alpha^\star,\beta^\star)$. By Proposition \ref{prop:slackness} this proves that $\pi^\star$ is unique. Suppose $\pi^\star$ is an optimal transport plan whose support is strictly $c$-cyclically monotone. Without loss of generality we assume that $\pi^\star$ is an optimal primal basic solution since by Lemma \ref{thm:optFeas} there always exists an optimal primal basic $\pi$ with support included in that $\pi^\star$, so $\pi$ will have strict $c$-cyclically monotone support. By complementary slackness we have for corresponding dual optimal solutions $(\alpha^\star,\beta^\star)$ that if $(i,j)$ belongs to a basic variable $\pi_{ij}$ then
\begin{equation*}
\alpha^\star_{i}+\beta^\star_{j}=c_{ij}.
\end{equation*}
Nondegeneracy for $(\alpha^\star,\beta^\star)$ means that for every tuple $(i,j)$ that does not belong to a basic variable we find that
\begin{equation*}
\alpha^\star_{i}+\beta^\star_{j}<c_{ij}.
\end{equation*}
To prove this strict inequality recall that the basic variables of $\pi^\star$ induce a tree (bipartite graph) between the support of the measure $r$ and the measure $s$ \cite[Section 3.4]{peyre2019computational}. Hence, for every index tuple $(i,j)$ that does not belong to a basic variable we can find a unique path 
\begin{align*}
(i=i_1,j_1,\ldots,j_{n-1},i_{n-1},j_n=j)
\end{align*}
on the induced tree such that each tuple $(i_k,j_k)$ for all $1\leq k\leq n$ belongs to $\text{supp}(\pi^\star)$ and $(i_k,j_{k-1})$ for all $1\leq k \leq n$ with $j_0\coloneqq j_n$ belong to basic variables in $\pi^\star$. In particular, by strong duality we have that $c_{i_kj_k}=\alpha^\star_{i_k}+\beta^\star_{j_k}$ and $c_{i_k j_{k-1}}=\alpha^\star_{i_k}+\beta^\star_{j_{k-1}}$. Suppose that also $c_{i_1j_n}=\alpha^\star_{i_1}+\beta^\star_{j_n}$, i.e., $(\alpha^\star,\beta^\star)$ are degenerate. Then the index set 
\begin{equation*}
\Gamma\coloneqq \left\lbrace (i_k,j_k)\,\mid\, 1\leq k\leq n \right\rbrace\subseteq \text{supp}(\pi^\star)
\end{equation*}
fulfils
\begin{equation*}
\sum_{k=1}^n c_{i_kj_k}=\sum_{k=1}^n c_{i_k j_{k-1}}, \quad j_0\coloneqq j_n
\end{equation*}
which contradicts the strict $c$-cyclically monotonicity. Consequently, the optimal dual variables $(\alpha^\star,\beta^\star)$ are nondegenerate and hence the optimal transport coupling $\pi^\star$ is unique.
\end{proof}

\begin{proof}[Theorem \ref{thm:dualnondegenerateOT}]\label{proof:thm:dualnondegenerateOT}
Let $I_k$ be a dual feasible basis inducing a dual feasible solution $(\alpha,\beta)$. Every such basis induces a graph $G(I_k)$ in the sense that if $(i,j)\in I_k$ then the $i$-th support point of the measure $r$ is connected to the $j$-th support point of measure $s$, i.e., $(i,j)\in G(I_k)$. By definition of dual feasible basis it holds that $\alpha_i+\beta_j=c_{ij}$. In fact, such a basis induces a tree structure between all support points of $r$ and all support points of $s$ \cite[Section 3.4]{peyre2019computational}.

In order to exclude that $\lambda(I_k)\neq \lambda(I_l)$ for $k\neq l$ we proceed as follows. Let $G(I_k)$ be the tree induced by basis $I_k$. For $k\neq l$ we clearly have that $G(I_k)\neq G(I_l)$ and consequently there exists at least one edge $(i,j)$ in $G(I_l)$ such that $(i,j)\not\in G(I_k)$. By definition if $(\tilde{\alpha},\tilde{\beta})$ are the feasible dual solutions induced by $I_l$ then we have $\tilde{\alpha}_i+\tilde{\beta}_j=c_{i,j}$. We now need to prove that $\alpha_i+\beta_j\neq c_{i,j}$ and hence $\lambda(I_k)=(\alpha,\beta)\neq (\tilde{\alpha},\tilde{\beta})=\lambda(I_l)$. To see this, notice that adding edge $(i,j)$ to $G(I_k)$ creates a cycle. In particular, after proper relabelling there exists a path of the form $$(i= i_1,j_1,i_2,j_2,\ldots,i_n,j_n= j)$$ such that $(i_l,j_l)\in G(I_k)$ as well as $(i_{l+1},j_{l})\in G(I_k)$ for all $1\leq l \leq n-1$. Recall further that by definition if edge $(i_k,j_k)\in G(I_k)$ then $\alpha_{i_k}+\beta_{j_k}=c_{i_k,j_k}$. Suppose now that $\alpha_i+\beta_j= c_{i,j}$. Then similar as in the proof of Theorem \ref{thm:uniquenessOT} the set $\left\lbrace (i_k,j_k)\right\rbrace_{1\leq k\leq n}$ contradicts the summability assumption \eqref{eq:summabilitydual}. Consequently, $(\alpha,\beta)\neq (\tilde{\alpha},\tilde{\beta})$ and further as $I_k$ and $I_l$ are arbitrarily chosen, we have that assumption \eqref{ass:dualnondegen} holds. 
\end{proof}

\begin{proof}[Proposition \ref{prop:uniquenessOT}]\label{proof:propuniquenessOT}
According to Theorem \ref{thm:dualnondegenerateOT} all dual feasible basic solutions for \eqref{eq:DOT} are nondegenerate if there exists no family of indices $\left\lbrace (i_k,j_k)\right\rbrace$ for $n\geq 2$ with all $i_k$ pairwise different and all $j_k$ pairwise different such that 
\begin{equation}\label{eq:proofsummability}
\sum_{k=1}^n \Vert X_{i_k}-Y_{j_k}\Vert_q^p=\sum_{k=1}^n \Vert X_{i_k}-Y_{j_{k-1}}\Vert_q^p, \quad Y_{j_0}\coloneqq Y_{j_{n}}\, .
\end{equation}
Notice, that this condition only involves finitely many families where equalities of the form \eqref{eq:proofsummability} have to be checked. Further, the union of finitely many null-sets is again a null-set. Hence, it suffices to prove that \eqref{eq:proofsummability} holds with probability zero for fixed $n$. For the sake of notational simplicity, we choose the first $n\leq N$ random locations $(\mathbf{X},\mathbf{Y})\coloneqq (X_{1},\ldots,X_{n},Y_{1},\ldots,Y_{n})$. We denote by $(\mathbf{x},\mathbf{y})=(x_1,\ldots,x_n,y_1,\ldots,y_n)\in \left( \mathbb{R}^D\right)^{2n}$ and define the set
\begin{equation*}
\begin{split}
A\coloneqq \left\lbrace  (\mathbf{x},\mathbf{y}) \in \left( \mathbb{R}^D\right)^{2n}\, \vert \, \sum_{k=1}^n \Vert x_k-y_{k-1}\Vert_q^p-\Vert x_k-y_k\Vert_q^p=0  \right\rbrace\, .
\end{split}
\end{equation*}
Then $\PR ((\mathbf{X},\mathbf{Y})\in A)$ is the probability that $(\mathbf{X},\mathbf{Y})$ fulfils \eqref{eq:proofsummability} and the goal is to show that this equals zero. Set $e_i\in\mathbb{R}^D$ to be the $i$th unit vector and consider the closed set
\begin{equation*}
\begin{split}
B\coloneqq \{  (\mathbf{x},\mathbf{y}) \in \left( \mathbb{R}^D\right)^{2n}\, \vert \, \langle x_k,e_i\rangle \in \{\langle y_{k-1},e_i\rangle,\langle y_k,e_i\rangle \},\, \forall 1\leq i\leq D,\, 1\leq k \leq n, \, y_0\coloneqq y_n \}\, .
\end{split}
\end{equation*}
Define the function $f\colon (\mathbb{R}^D)^{2n}\setminus B \to \mathbb{R}$ with $f(\mathbf{x},\mathbf{y})=\sum_{k=1}^n \Vert x_k-y_{k-1}\Vert_q^p-\Vert x_k-y_k\Vert_q^p$. We can rewrite 
\begin{equation*}
\PR \left((\mathbf{X},\mathbf{Y})\in A\right)\leq \PR \left((\mathbf{X},\mathbf{Y}) \in f^{-1}(0)\right) + \PR \left((\mathbf{X},\mathbf{Y})\in B\right)\, .
\end{equation*}
The second term on the right-hand side is zero since by independence and absolute continuity the high-dimensional vector $(\mathbf{X},\mathbf{Y})$ has a Lebesgue density and the set $B$ lives in dimension less than $2Dn$. It remains to discuss $\PR \left((\mathbf{X},\mathbf{Y}) \in f^{-1}(0)\right)$. The open set $(\mathbb{R}^D)^{2n}\setminus B$ on which $f$ is defined can be partitioned into finitely many\footnote{less than $6^{nD}$} open connectedness components $U_1,\ldots,U_L$ according to the signs of $\langle x_k-y_k,e_i\rangle$ and $\langle x_k-y_{k-1},e_i\rangle$. On each such component $f_{\vert U_i}$ is analytic. Further, $f_{\vert U_l}$ is not identically zero function on $U_l$. For this, consider for any point $(\mathbf{x},\mathbf{y})\in U_l$ and $\epsilon\in\mathbb{R}$ the function 
\begin{equation*}
f_{\vert U_l}(\epsilon)=f_{\vert U_l}(x_1+\epsilon e_i,x_2,\ldots,x_n,y_1,\ldots,y_n)
\end{equation*}
with derivative at $\epsilon=0$ given by
\begin{equation}\label{eq:derf}
\frac{\partial f_{\vert U_l}}{\partial \epsilon}_{\vert \epsilon =0}=p\left(\Vert x_1-y_1\Vert_q^{p-q} \frac{\vert x_{1_i}-y_{1_i}\vert^q}{(x_{1_i}-y_{1_i})}-\Vert x_1-y_n\Vert_q^{p-q}\frac{\vert x_{1_i}-y_{n_i}\vert^q}{(x_{1_i}-y_{n_i})} \right)\, ,
\end{equation}
where $x_{i_j}$ denotes the $j$th entry of the $i$th vector. If this derivative is nonzero, then clearly $f$ is not identically zero.  If the derivative is zero then we shall show that there exists another point in $U_l$ for which this derivative is nonzero.  Since $U_l$ is open, we can add $\delta e_j$ to $y_n$ for small $\delta$ and any $1\le j\le D$.  If $p\ne q$ then, taking $j\ne i$ (which is possible because $D\ge2$) only modifies the term $\|x_1-y_n\|$ in \eqref{eq:derf}, and for small $\delta$ the derivative will not be zero.  If $p=q\ne 1$ then the norms do not appear in \eqref{eq:derf} and taking $j=i$ would yield a nonzero derivative.  Hence, if $p$ and $q$ are not both equal to one
, the function $f$ is not identically zero on each piece $U_l$. Finally, we deduce by \citet[Lemma 1.2]{dang2015complex} that $\PR\left((\mathbf{X},\mathbf{Y})\in f^{-1}_{\vert U_l}(\{0\})\right)=0$ for any $1\leq l\leq L$. An application of the union bound finishes the proof of the main statement since 
\begin{equation*}
\begin{split}
\PR \left((\mathbf{X},\mathbf{Y})\in A\right)&\leq \PR \left((\mathbf{X},\mathbf{Y}) \in f^{-1}(0)\right) + \PR \left((\mathbf{X},\mathbf{Y})\in B\right)\\
&\leq \sum_{l=1}^L \PR\left( (\mathbf{X},\mathbf{Y})\in f^{-1}_{\vert U_l}(\{0\}))\right)+ \PR \left((\mathbf{X},\mathbf{Y})\in B\right)=0\, .
\end{split}
\end{equation*}
The argument only depends on the position of the random support points of the probability measures $r=\sum_{k=1}^n r_k\delta_{X_k}$ and $s=\sum_{k=1}^n s_k\delta_{Y_k}$ and hence is uniform in their probability weights. Recall further Proposition \ref{prop:slackness} that if the dual problem admits a nondegenerate optimal solution the primal optimal solution is unique. We conclude that almost surely the optimal transport coupling is unique.
\end{proof}

\begin{rem}\label{rem:uniquenessOT}
The proof remains correct if both measures $r,s$ are based on the same random locations $X_1,\ldots,X_N\overset{\text{i.i.d.}}{\sim}\mu$ with $\mu$ absolutely continuous with respect to Lebesgue measure, i.e., $r=\sum_{k=1}^n r_k\delta_{X_k}$, $s=\sum_{k=1}^n s_k\delta_{X_k}$ and cost $c(X_i,X_k)=\Vert X_i-X_j\Vert^p_q$. Notice that this only involves a different definition of the function $f$ in the proof of Proposition \ref{prop:uniquenessOT} now based on all $X_k$ involved. Still, on a proper open subset $f$ remains analytic and is not equal the zero function. The rest of the arguments are then analogously.
\end{rem}

\begin{proof}[Proposition \ref{prop:dualnondegenerateOT}]\label{proof:dualnondegenerateOT}
The summability condition \eqref{eq:summabilityprimal} implies all primal feasible basic solutions for OT to be nondegenerate \cite[Corollary 3]{klee1968facets}. In particular, any optimal basic solution is nondegenerate. Now, if either of the two conditions in Proposition \ref{prop:dualnondegenerateOT} hold then the transport coupling is unique and given by one of the basic solutions. This implies the underlying basis to be unique and therefore assumption \eqref{ass:dualnondegen} is trivially fulfilled.
\end{proof}

\begin{lem}\label{lem:boundaryzero}
Consider the optimal transport problem \eqref{eq:OTprimal} between two probability measures $r,s\in\text{ri}(\Delta_N)$. Recall the definition of the closed convex cone from Section \ref{subsubsec:convexcones},
\begin{equation*}
H\coloneqq \bigcap_{j\in J} \left\lbrace v\in \mathbb{R}^{2N-1} \, \vert \, \left[{A_{\dagger\, I}^{-1}}v\right]_j\geq 0\right\rbrace,
\end{equation*}
where $I$ is a feasible dual basis and  $J$ is the set of indices corresponding to degenerate zeroes in the optimal transport coupling $\pi^\star$ induced by $I$. Then it holds that 
\begin{equation*}
\PR(G\in \partial H_k)=0,
\end{equation*}
where $G=(G^1(r_\dagger),G^2(s))$ is a centred Gaussian distribution on $\mathbb{R}^{2N-1}$ with block diagonal covariance matrix, where the blocks are given in \eqref{eq:covmatrix}.
\end{lem}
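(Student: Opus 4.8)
Proof proposal (plan). The statement $\PR(G\in\partial H)=0$ is a claim about a Gaussian that is \emph{not} absolutely continuous on $\R^{2N-1}$, so the plan is to first reduce it to finitely many one‑dimensional statements and then check, using the combinatorics of the basis $I$, that each of those is nondegenerate. Write $I=\{i_1<\dots<i_{2N-1}\}$ and let $\ell_j^T\in\R^{2N-1}$ denote the $j$‑th row of $A_{\dagger I}^{-1}$, so that $H=\bigcap_{j\in J}\{v\in\R^{2N-1}\,:\,\ell_j^Tv\ge0\}$. Since $A_{\dagger I}^{-1}$ is invertible, each $\ell_j$ is nonzero, so each set in this intersection is a genuine closed half‑space whose boundary is the hyperplane $\{v:\ell_j^Tv=0\}$. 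Because the boundary of a finite intersection of closed half‑spaces is contained in the union of their boundaries,
\[
\partial H\subseteq\bigcup_{j\in J}\{v\in\R^{2N-1}:\ell_j^Tv=0\},
\]
and by the union bound it suffices to show $\PR(\ell_j^TG=0)=0$ for each $j\in J$.

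Next I would exploit the block structure of $G=(G^1(r_\dagger),G^2(s))$. Split $\ell_j=(\ell_j^r,\ell_j^s)$ with $\ell_j^r\in\R^{N-1}$ and $\ell_j^s\in\R^N$. Since $G^1(r_\dagger)$ and $G^2(s)$ are independent centred Gaussians, $\ell_j^TG=(\ell_j^r)^TG^1(r_\dagger)+(\ell_j^s)^TG^2(s)$ is a centred univariate Gaussian with variance
\[
\sigma_j^2=(\ell_j^r)^T\Sigma_r\,\ell_j^r+(\ell_j^s)^T\Sigma(s)\,\ell_j^s,
\]
where $\Sigma_r$ is the $(N-1)\times(N-1)$ leading principal submatrix of $\Sigma(r)$ (the covariance of $G^1(r_\dagger)$) and $\Sigma(s)$ is as in \eqref{eq:covmatrix}. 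If $\sigma_j^2>0$ then $\ell_j^TG$ has a Lebesgue density and $\PR(\ell_j^TG=0)=0$; if $\sigma_j^2=0$ then $\ell_j^TG=\E[\ell_j^TG]=0$ almost surely. Hence the whole statement reduces to proving $\sigma_j^2>0$ for every $j\in J$.

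Both summands defining $\sigma_j^2$ are nonnegative, so $\sigma_j^2=0$ forces each to vanish. A short Cauchy–Schwarz computation, using $r\in\mathrm{ri}(\Delta_N)$, shows $\Sigma_r=\mathrm{diag}(r_\dagger)-r_\dagger r_\dagger^T$ is positive definite (for $x\neq0$, $x^T\Sigma_r x\ge r_N\sum_{i<N}r_ix_i^2>0$), so $(\ell_j^r)^T\Sigma_r\ell_j^r=0$ forces $\ell_j^r=0$. Likewise, for $s\in\mathrm{ri}(\Delta_N)$ the matrix $\Sigma(s)=\mathrm{diag}(s)-ss^T$ has kernel exactly $\mathrm{span}(\mathbf 1_N)$, so $(\ell_j^s)^T\Sigma(s)\ell_j^s=0$ forces $\ell_j^s=\gamma\mathbf 1_N$ for some $\gamma\in\R$. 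Thus $\sigma_j^2=0$ would give $\ell_j=(0_{N-1},\gamma\mathbf 1_N)$. This is exactly where $j\in J$ is used: evaluating the $j$‑th row at the true marginal vector gives $\ell_j^T(r_\dagger,s)=[A_{\dagger I}^{-1}(r_\dagger,s)]_j=\pi^\star_{i_j}$, which equals $0$ because $i_j$ is a degenerate zero of $\pi^\star$; on the other hand $\ell_j^T(r_\dagger,s)=\gamma\,\mathbf 1_N^Ts=\gamma$. Hence $\gamma=0$, so $\ell_j=0$, contradicting invertibility of $A_{\dagger I}$. Therefore $\sigma_j^2>0$, and summing over $j\in J$ completes the proof.

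The only genuinely delicate point — the one I would flag as the main obstacle — is the last step. Because $G$ is supported on the proper subspace $\R^{N-1}\times\{w:\mathbf 1_N^Tw=0\}$, a relevant hyperplane could a priori carry positive mass, and one must rule out that $\partial H$ is aligned with this degenerate direction. The clean resolution is the observation that the only row of $A_{\dagger I}^{-1}$ that can be orthogonal to the support of $G$ is of the form $(0,\gamma\mathbf 1_N)$, and for a row indexed by a degenerate zero this is incompatible with $\pi^\star_{i_j}=0$ unless the row is zero. Everything else — the half‑space boundary inclusion, positive definiteness of $\Sigma_r$, and the kernel of $\Sigma(s)$ — is routine.
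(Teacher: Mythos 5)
Your proposal is correct, and it takes a genuinely different — and in fact cleaner — route than the paper. The paper's argument (via its Lemma~C.1) invokes Brualdi's combinatorial characterization of basic solutions of transportation polytopes to write the rows of $A_{\dagger I}^{-1}$ explicitly as signed indicator differences $\pm\big(\sum_{j\in\mathsf R}v_j-\sum_{k\in\mathsf S}v_k\big)$, and then checks case by case (depending on whether the $\mathsf R$-part is empty) that the resulting one-dimensional Gaussian is nondegenerate. You avoid this combinatorial bookkeeping entirely: you observe that $\sigma_j^2=0$ would force the $j$-th row $\ell_j$ of $A_{\dagger I}^{-1}$ to lie in $\{0_{N-1}\}\times\mathrm{span}(\mathbf 1_N)$, and then the single evaluation $\ell_j^T(r_\dagger,s)^T=\pi^\star_{i_j}=0$ (which is exactly where $j\in J$ enters) forces $\ell_j=0$, contradicting invertibility. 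This is more abstract and arguably more robust: it uses only (i) $\partial H\subseteq\bigcup_{j\in J}\{\ell_j^T v=0\}$, (ii) positive definiteness of $\mathrm{diag}(r_\dagger)-r_\dagger r_\dagger^T$ and the one-dimensional kernel of $\Sigma(s)$ for interior marginals, and (iii) the degenerate-zero property of the basis index, whereas the paper needs the full combinatorial structure of the node-arc incidence matrix. It also sidesteps the slightly awkward ``$\in\{0,1\}$'' phrasing in the paper's auxiliary lemma (the constant $1$ there is an artifact of substituting $r_N=1-\sum_{i<N}r_i$ and plays no role once one works directly with the linear functionals). The two-sample scaling factors $\sqrt\lambda,\sqrt{1-\lambda}$ do not affect your argument since they only rescale the two nonnegative summands of $\sigma_j^2$ by positive constants. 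One small point worth making explicit in a final write-up is that $I$ must be a primal-and-dual optimal basis (one of $I_1,\dots,I_K$), so that $\pi^\star=\mathrm{Aug}_I[A_{\dagger I}^{-1}(r_\dagger,s)^T]$ is the optimal coupling; this is how the lemma is actually invoked in the proof of Theorem~\ref{thm:limitforOT}, and it is exactly what makes $\ell_j^T(r_\dagger,s)^T=\pi^\star_{i_j}$ legitimate.
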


\begin{proof}
We first require a result essentially relying on an observation in \citet[Corollary 8.1.4]{brualdi2006combinatorial} characterizing basic variables in optimal transport couplings. In fact, this can be used to obtain a proper subset of the boundaries for our generic hyperplanes
\begin{equation*}
H\coloneqq \bigcap_{j\in J} \left\lbrace v\in \mathbb{R}^{2N-1} \, \vert \, \left[{A_{\dagger\, I}^{-1}}v\right]_j\geq 0\right\rbrace,
\end{equation*}
where $I$ is a dual feasible basis and $J$ is the set of indices corresponding to degenerate zeroes in the optimal transport coupling $\pi^\star$.

\begin{lem}\label{lem:boundary}
Consider the optimal transport problem \eqref{eq:OTprimal} between two probability measures $r,s\in\text{ri}(\Delta_N)$. Then
\begin{equation}
\partial H\subseteq \bigcup_{i=1}^{2N-1} \left\lbrace v\in\mathbb{R}^{2N-1}\,\vert\,\pm\left\lbrace\sum_{j\in \mathsf{R}_i} v_j - \sum_{k\in \mathsf{S}_i}v_k\right\rbrace\in \{0,1\}\right\rbrace,
\end{equation}
where $\mathsf{R}_i\subseteq \{1,\ldots,N-1\}$ and $\mathsf{S}_i\subseteq \{N,\ldots,2N-1\}$ are not both equal to the empty set.
\end{lem}

\begin{proof}
Consider first the optimal transport problem with feasible set $\Pi(r,s)$, i.e., we do not delete the last entry of $r$. According to \citet[Corollary 8.1.4]{brualdi2006combinatorial} the nonnegative entries of any extreme point $\pi$ for the polytope $\Pi(r,s)$ are of the form 
\begin{equation*}
\pi_i=\pm\left\lbrace \sum_{j\in \mathsf{R}_i}r_j-\sum_{k\in \mathsf{S}_i}s_k\right\rbrace
\end{equation*}
for some subsets $\mathsf{R}_i\subseteq\{1,\ldots,N\}$ and $\mathsf{S}_i\subseteq\{N+1,\ldots,2N\}$, where not both sets are simultaneously equal to the empty set. Notice that the set of all extreme points for the polytope $\Pi(r,s)$ is equal to $\text{Aug}(A_{I_k}^{-1}[r,s]^T)$ for all primal feasible bases $I_1,\ldots,I_k$. We conclude that for each primal feasible basis $I_k$ and each coordinate $i\in I_k$ there exists subsets $\mathsf{R}_i\subseteq \{1,\ldots,N\}$ and $\mathsf{S}_i\subseteq \{N+1,\ldots,2N\}$ not both equal to the empty set such that 
\begin{equation}\label{eq:proofsum}
\left[A_{I_k}^{-1}[r,s]^T\right]_i= \pm\left\lbrace\sum_{j\in \mathsf{R}_i} r_j - \sum_{k\in \mathsf{S}_i}s_k\right\rbrace.
\end{equation}
Recall the notion of $r_\dagger$, where we delete the last entry of the probability measure $r$ in order to guarantee full rank of $A_\dagger$. In view of \eqref{eq:proofsum} a similar statement is true if we only consider right-hand side vector $[r_\dagger,s]^T$ that, however, requires a careful case distinction. In fact, \eqref{eq:proofsum} remains true for $[{A_{\dagger\, I_k}^{-1}}[r_\dagger,s]^T]_i$ if $\mathsf{R}_i$ does not contain the index $N$. Moreover, if $N\in \mathsf{R}_i$ then we can replace $r_N=1-\sum_{i\in\{1,\ldots,N-1\}}r_i$. Notice that this only depends on the first $1,\ldots,N-1$ coordinates. In total, we obtain the following modified version of \eqref{eq:proofsum}: For each primal feasible basis $I_k$ and $i\in I_k$ it holds that
\begin{equation}\label{eq:proofsummodi}
\left[{A_{\dagger\, I_k}^{-1}}[r_\dagger,s]^T\right]_i=
\begin{cases}
\pm\left\lbrace\sum_{j\in {\mathsf{R}_\dagger}_i} r_j - \sum_{k\in \mathsf{S}_i}s_k\right\rbrace, \text{ for } N\notin \mathsf{R},\\
\pm\left\lbrace 1-\sum_{j\in {\mathsf{R}_\dagger}_i} r_j - \sum_{k\in \mathsf{S}_i}s_k\right\rbrace, \text{ for } N\in \mathsf{R}.
\end{cases}
\end{equation}
Now, recall that for a feasible dual basis $I_k$ we define the set 
\begin{equation*}
H_k\coloneqq \bigcap_{j\in J_k\subseteq I_k} \left\lbrace v\in \mathbb{R}^{2N-1} \, \vert \, \left[{A_{\dagger\, I_k}^{-1}}v\right]_j\geq 0\right\rbrace,
\end{equation*}
where $J_k$ is the set of indices corresponding to degenerate zeroes in the optimal transport coupling $\pi^\star$. Together with our modified version \eqref{eq:proofsummodi} we conclude the statement by the chain of inclusions
\begin{equation*}
\begin{split}
\partial H_k \subseteq \bigcup_{j \in J_k} \partial \left\lbrace v\in \mathbb{R}^{2N-1}\, \vert \, \left[{A_{\dagger\, I_k}^{-1}}v\right]_j\geq 0 \right\rbrace
&=\bigcup_{j \in J_k} \left\lbrace v\in \mathbb{R}^{2N-1}\, \vert \, \left[{A_{\dagger\, I_k}^{-1}}v\right]_j= 0 \right\rbrace\\
&\subseteq \bigcup_{i=1}^{2N-1} \left\lbrace v\in\mathbb{R}^{2N-1}\,\vert\,\pm\left\lbrace\sum_{j\in \mathsf{R}_i} v_j - \sum_{k\in \mathsf{S}_i}v_k\right\rbrace\in \{0,1\}\right\rbrace.
\end{split}
\end{equation*}
\end{proof}

Continuing the proof of Lemma \ref{lem:boundaryzero}, according to Lemma \ref{lem:boundary} and the union bound we have that 
\begin{equation*}
\PR(G\in \partial H_k)\leq \sum_{i=1}^{2N-1} \PR\left(\pm\left( \sum_{j\in\mathsf{R}_i} G^1(r_\dagger)_j-\sum_{k\in\mathsf{S}_i} G^2(s)_k\right)\in\{0,1\}\right)
\end{equation*}
for pairs of proper subsets $(\mathsf{R}_i,\mathsf{S}_i)\subseteq \{1,\ldots,N-1\}\times\{N,\ldots,2N-1\}$, $i=1,\ldots,2N-1$ not both equal to the empty set. Recall that $G^1(r_\dagger)$ is independent of $G^2(s)$ and admits a density on $\R^{N-1}$. Hence, each coordinate $G^1(r_\dagger)_j$ has a density. In particular, if the set $\mathsf{R}_i$ is non empty then the probability that the random entries $G^1(r_\dagger)$ fulfil either one of finitely many equality constraints is zero. More precisely,  the event on the right-hand side of the last display has probability zero. 
If $\mathsf{R}_i$ is empty, then $\mathsf{S}_i$ contains at least one element, but not all of them as it is proper. If $s>0$, then the only eigenvector in the kernel of $\Sigma(s)$ is a vector of ones. Hence the distribution of $\sum_{k\in\mathsf{S}_i} G^2(s)_k$ is absolutely continuous, and therefore almost surely this random variable is not in $\{-1,0,1\}$. This completes the proof.
If $\mathsf{R}_i$ is the empty set, notice that $\mathsf{S}_i$ contains at least one element and hence the sum $\sum_{k\in\mathsf{S}_i} G^2(s)_k\neq 0$. Furthermore, the sum $\sum_{k\in\mathsf{S}_i} G^2(s)_k$ can also never attain the value $\pm 1$ as this would lead to an infeasible coupling (a matrix of dim $N\times N$ with $N\geq 2$ containing only zeroes except that one coordinate is equal to one). This finishes the proof.
\end{proof}

\section{Further Illustrations}\label{Appendix:FurtherIllustration}
\textbf{The Limit Law.} Consider Example \ref{exm:OT}, where we assume both probability vectors $r$, $s$ to be equal and strictly positive. We here allow for general cost exponent $p\in (0,\infty)$. In any case, the corresponding optimal solution is unique and supported on the diagonal, i.e., all the mass remains at its current location. In particular, the optimal solution is degenerate. For our case of 9 variables and 5 constraints, there are at most $\binom 95=126$ candidates for bases $I$. In this one-dimensional optimal transport problem, however, only 81 are such that the corresponding matrix $A_{\dagger I}$ is invertible. Moreover and since the optimal solution is supported on the diagonal, each primal optimal basis necessarily includes the diagonal. In terms of the bases set that means that for each primal and dual optimal basis $\{1,5,9\}\subset I$, and only such bases could appear in the limit law. There are at most twelve such bases, corresponding to the transport schemes

\begin{small}
\begin{tabular}{cccc}
$TS(I_1)=\begin{pmatrix}
\ast & \ast \\
&\ast&\\
&\ast&\ast
\end{pmatrix}$ & $TS(I_2)=\begin{pmatrix}
\ast &  \\
\ast&\ast&\ast\\
&&\ast
\end{pmatrix}$ & $TS(I_3)=\begin{pmatrix}
\ast & & \\
\ast & \ast& \\
\ast & &\ast
\end{pmatrix}$ & $TS(I_4)=\begin{pmatrix}
\ast & &\ast \\
 & \ast&\ast \\
 & &\ast
\end{pmatrix}$ \\ 
$TS(I_5)=\begin{pmatrix}
\ast & \ast & \ast \\
&\ast&\\
&&\ast
\end{pmatrix}$ & $TS(I_6)=\begin{pmatrix}
\ast &&  \\
&\ast&\\
\ast&\ast&\ast
\end{pmatrix}$ & $TS(I_7)=\begin{pmatrix}
\ast &  \\
\ast&\ast&\\
&\ast&\ast
\end{pmatrix}$ & $TS(I_8)=\begin{pmatrix}
\ast & \ast \\
&\ast&\ast\\
&&\ast
\end{pmatrix}$ \\
$TS(I_{9})=\begin{pmatrix}
\ast & &\ast \\
& \ast &\\
&\ast&\ast
\end{pmatrix}$ & $TS(I_{10})=\begin{pmatrix}
\ast && \ast \\
\ast&\ast\\
&&\ast
\end{pmatrix}$ & $TS(I_{11})
=\begin{pmatrix}
\ast & & \\
 &\ast &\ast \\
\ast & &\ast
\end{pmatrix}$ & $TS(I_{12})
=\begin{pmatrix}
\ast &\ast & \\
 &\ast & \\
\ast & &\ast
\end{pmatrix}$,
\end{tabular} 
\end{small}

\noindent
all of which are optimal for $r=s$. However, only some of these bases also induce dual optimal basic solutions. This depends on the cost vector $c$ through the parameter $p>0$. The following table illustrates this dependence. Recall that $K$ is the number of primal and dual feasible bases.

\begin{table}
\renewcommand{\arraystretch}{1.5}
\begin{center}
\begin{tabular}{cccc}
Cost exponent $p$ & primal \& dual optimal bases & $K$ & dual nondegeneracy \eqref{ass:dualnondegen}\\
\hline
$(0,1)$ & $I_1$, $I_2$, $I_3$, $I_4$, $I_5$, $I_6$ & 6 & $\surd$\\
1 & $I_1$, $I_2$, $I_3$, $I_4$, $I_5$, $I_6$, $I_7$, $I_8$ & 8 & $X$\\
$(1,\infty)$ & $I_1$, $I_2$, $I_7$, $I_8$ & 4 & $\surd$\\
\end{tabular}
\caption{Primal and dual optimal bases for optimal transport between three ordered points on the real line. Cost given by $\vert x-y\vert^p$ depending on $p$.}
\end{center}
\end{table}
To each basis $I$ we associate a cone by \eqref{eq:Hk} that have already been computed in Example \ref{exm:OT}. Notice that the failure of \eqref{ass:dualnondegen} for $p=1$ causes the limit law to be of a more complicated nature as some of these cones have nontrivial intersections. For $p\neq 1$, this is not the case and given the marginal limit law from \eqref{eq:empprocess2} we deduce according to Theorem \ref{thm:limitforOT} that, e.g., for $p\in (1,\infty)$ the limit law for the transport coupling reads as 
\begin{small}
\begin{equation*}
\begin{split}
M(\mathbf{G})=\sum_{k\in\{1,2,7,8\}}\mathbbm{1}_{\mathbf{G}\in H_{k}}\pi(I_k,\mathbf{G})=
&\mathbbm{1}_{\left\lbrace\substack{\mathbf{G_1}\geq \mathbf{G_3}\\ \mathbf{G_1}+\mathbf{G_2}\leq\mathbf{G_3}+\mathbf{G_4} }\right\rbrace}\pi(I_1,\mathbf{G})+
\mathbbm{1}_{\left\lbrace\substack{\mathbf{G_1}\leq \mathbf{G_3}\\ \mathbf{G_1}+\mathbf{G_2}\geq \mathbf{G_3}+\mathbf{G_4}}\right\rbrace}\pi(I_2,\mathbf{G})\\
&+\mathbbm{1}_{\left\lbrace\substack{\mathbf{G_1}\leq \mathbf{G_3}\\  \mathbf{G_1}+\mathbf{G_2}\leq \mathbf{G_3}+\mathbf{G_4}}\right\rbrace}\pi(I_7,\mathbf{G})+
\mathbbm{1}_{\left\lbrace\substack{\mathbf{G}_1\geq \mathbf{G_3}\\  \mathbf{G_1}+\mathbf{G_2}\geq \mathbf{G_3}+\mathbf{G_4}}\right\rbrace}\pi(I_8,\mathbf{G}),
\end{split}
\end{equation*}
\end{small}
and similarly for $p\in(0,1)$.\\
\textbf{The Hausdorff Distance.} Suppose now that $p=1$ but the probability vectors are $r=\left(\nicefrac{1}{4},\nicefrac{1}{4},\nicefrac{1}{2}\right)$ and $s=\left(\nicefrac{1}{2},\nicefrac{1}{4},\nicefrac{1}{4}\right)$. In this case, there are four primal and dual optimal transport schemes $I_1,I_2,I_3,I_4$, namely

\begin{small}
\begin{center}
\begin{tabular}{cccc}
$TS(I_1)=\begin{pmatrix}
\ast & & \\
\ast& &\\
\ast&\ast&\ast
\end{pmatrix}$, & $TS(I_2)=\begin{pmatrix}
\ast & & \\
\ast&\ast&\\
&\ast&\ast
\end{pmatrix}$, &
$TS(I_3)=\begin{pmatrix}
\ast & & \\
&\ast&\\
\ast&\ast&\ast
\end{pmatrix}$, & $TS(I_4)=\begin{pmatrix}
\ast &  \\
\ast&\ast&\\
\ast& &\ast
\end{pmatrix},$ \\ 
\end{tabular} 
\end{center}
\end{small}
that induce the following two primal optimal basic solutions
\begin{small}
\begin{equation*}
\begin{split}
\pi(I_1,(r_\dagger,s))=\pi(I_2,(r_\dagger,s))=\begin{pmatrix}
\nicefrac{1}{4} & & \\
\nicefrac{1}{4}&0 &\\
0&\nicefrac{1}{4}&\nicefrac{1}{4}
\end{pmatrix},\quad 
\pi(I_3,(r_\dagger,s))=\pi(I_4,(r_\dagger,s))=\begin{pmatrix}
\nicefrac{1}{4} &  \\
0&\nicefrac{1}{4}&\\
\nicefrac{1}{4}&0 &\nicefrac{1}{4}
\end{pmatrix},
\end{split}
\end{equation*}
\end{small}
respectively. Notice that each convex combination of both primal optimal basic solutions is also primal optimal. According to our equivalence classes as defined in the proof of Theorem \ref{thm:Hausdorff} in Proof \ref{proof:Hausdorff}, we find that $\mathcal{B}_1=\{1,2\}$ and $\mathcal{B}_2=\{3,4\}$. Suppose for $\varepsilon>0$ sufficiently small we perturb the probability vector $r$ to obtain $\tilde{r}=\left(\nicefrac{1}{4}+\varepsilon,\nicefrac{1}{4}-\varepsilon,\nicefrac{1}{2}\right)$. By the nonnegativity constraint the transport scheme $I_4$ becomes infeasible
\begin{small}
\begin{equation*}
\pi(I_4,(\tilde{r}_\dagger,s))=\begin{pmatrix}
\nicefrac{1}{4}+\varepsilon &  \\
-\varepsilon & \nicefrac{1}{4} &\\
\nicefrac{1}{4}&0 &\nicefrac{1}{4}
\end{pmatrix}.
\end{equation*}
\end{small}
However, for $\varepsilon\in (0,\nicefrac{1}{4})$ the transport schemes $I_1,I_2,I_3$ remain feasible
\begin{small}
\begin{equation*}
\begin{split}
\pi(I_1,(\tilde{r}_\dagger,s))=\pi(I_2,(\tilde{r}_\dagger,s))=\begin{pmatrix}
\nicefrac{1}{4}+\varepsilon & & \\
\nicefrac{1}{4}-\varepsilon &0 &\\
0&\nicefrac{1}{4}&\nicefrac{1}{4}
\end{pmatrix},\quad
\pi(I_3,(\tilde{r}_\dagger,s))=\begin{pmatrix}
\nicefrac{1}{4}+\varepsilon &  \\
0&\nicefrac{1}{4}-\varepsilon&\\
\nicefrac{1}{4}-\varepsilon& \varepsilon &\nicefrac{1}{4}
\end{pmatrix}
\end{split}
\end{equation*}
\end{small}
and hence optimal. As expected from Lemma \ref{lem:basisequivalenceclasses}, for small perturbations at least one transport scheme from each equivalence class $\mathcal{B}_1$, $\mathcal{B}_2$ remains optimal for the perturbed problem. In addition to optimality of $I_1$, $I_2$ and $I_3$, it is clear that
\begin{equation*}
\Vert \pi(I_j,(r_\dagger,s))-\pi(I_j,(\tilde{r}_\dagger,s))\Vert=O(\varepsilon)
\end{equation*}
for $j=1,2,3$. Although for the perturbed problem $I_4$ is no longer feasible, we still have $\Vert \pi(I_4,(r_\dagger,s))-\pi(I_3,(\tilde{r}_\dagger,s))\Vert=O(\varepsilon)$. In particular, we conclude for the Hausdorff distance $d_H\left(Opt\left(r_\dagger,s\right),Opt\left(\tilde{r}_\dagger,s\right)\right)=O(\varepsilon)$.
For small perturbations with $\epsilon<0$, bases $I_1$, $I_2$ and $I_4$ are optimal and $I_3$ becomes infeasible.  In a similar fashion to the aforementioned case, it holds that $\Vert \pi(I_3,(r_\dagger,s))-\pi(I_4,(\tilde{r}_\dagger,s))\Vert=O(\varepsilon)$ and the Hausdorff distance between the optimality sets is $O(\varepsilon)$. Thus the Hausdorff distance is of the same magnitude as the perturbations, which is $O_\mathbb P(r_n^{-1})$ in view of \eqref{ass:cltforb}.

\end{document}